\documentclass[11pt,reqno]{article}
\usepackage[margin=0.96in, top=2.2cm,bottom=2.2cm]{geometry}

\usepackage[english]{babel}
\usepackage[T1]{fontenc}
\usepackage{setspace}
\usepackage{enumitem}
\usepackage[normalem]{ulem}
\usepackage{float}
\usepackage{scrextend}
\deffootnote{0em}{1.6em}{\thefootnotemark.\enskip}
\usepackage{xcolor}

\usepackage{amsmath}
\usepackage[makeroom]{cancel}
\usepackage{amssymb}
\usepackage{amsthm}
\usepackage{mathtools}
\usepackage{euscript,mathrsfs}
\usepackage{bbm}
\PassOptionsToPackage{hyphens}{url}
\usepackage[colorlinks=true, allcolors=blue]{hyperref}
\usepackage{graphicx}
\usepackage[title]{appendix}
\usepackage{bm}
\usepackage{subcaption}

\mathtoolsset{showonlyrefs}

\hypersetup{
    colorlinks,
    linkcolor={blue},
    citecolor={blue},
    urlcolor={blue}
}
\usepackage{yfonts}
\usepackage{indentfirst}
\usepackage[
    citestyle=numeric,
    bibstyle=authoryear,
    dashed=false,
    maxnames=4]{biblatex}
\makeatletter
\input{numeric.bbx}
\makeatother
\renewbibmacro{in:}{}
\DeclareNameAlias{author}{family-given}

\DeclareFieldFormat*{title}{#1}
\DeclareFieldFormat*[book]{title}{\textit{#1}}
\DeclareFieldFormat*{volume}{\textbf{#1}}

\newtheorem{Th}{Theorem}[section]

\newtheorem{Lem}[Th]{Lemma}
\newtheorem{Prop}[Th]{Proposition}
\newtheorem{Cor}[Th]{Corollary}
\theoremstyle{definition}
\newtheorem{remark}[Th]{Remark}

\numberwithin{equation}{section}

\newcommand{\fc}{\mathcal{F}}
\newcommand{\pr}{\mathbb{P}}
\newcommand{\ex}{\mathbb{E}}
\newcommand{\T}{\mathcal{T}}

\title{\textsc{Optimal Stopping for a Diffusion with Unobserved Bernoulli Drift}}
\author{
\textsc{Georgy Gaitsgori} 
\thanks{ 
\,\textsc{Columbia University, Department of Mathematics, 2990 Broadway, New York, NY 10027, USA} (e-mail: {\it gg2793@columbia.edu})}
\and
\textsc{Ioannis Karatzas}
\thanks{ 
\,\textsc{Columbia University, Department of Mathematics, 2990 Broadway, New York, NY 10027, USA} (e-mail: {\it ik1@columbia.edu})}
}
\date{\today}

\begin{document}
\maketitle

\begin{abstract}
    We solve fairly explicitly an optimal stopping problem for a Wiener process with unobserved Bernoulli drift, in the presence of a cost on terminal position which is symmetric and increases with distance from the origin, and of a fixed positive cost per unit time \(c > 0\).
    After filtering, the problem reduces to Markovian optimal stopping with complete observations for the state process ``centered'' by its starting position $x \in \mathbb R$.
    However, the solution becomes possible only after foliating by an additional state-parameter \(y \in \mathbb{R}\), representing the displacement from the initial position; this foliation ``lifts'' the problem from the real line to the plane, solves the augmented problem for each fixed initial position \(x\), characterizes fairly explicitly the optimal stopping region in \((x,y)\)-space, and finally obtains the solution of the original problem by ``slicing'' along \(y=0\).
    Following this procedure, we show that, under suitable structural assumptions on the terminal cost, each fixed-\(x\) continuation section is either empty or a single bounded interval, whose endpoints are determined uniquely by a balancing condition; the corresponding value function is then given in semi-explicit form.  The two-dimensional continuation region is obtained by gluing these fixed-\(x\) intervals over \(x\); its two free boundaries satisfy natural monotonicity properties and, at regular points, can be described by a coupled system of ordinary differential equations.  The resulting description yields a threshold-type solution of the original one-dimensional problem whenever the horizontal slice \(y=0\) enters the two-dimensional continuation region.
\end{abstract}

\noindent
 {\sl AMS  2020 Subject Classification:}  Primary 60G40, 60G35;
 Secondary 35R35, 62C10, 60J60.

\noindent
 {\sl Keywords: Optimal stopping, variational inequalities, filtering, Bernoulli drift, dimension-enlargement.} 

\section{Introduction}\label{sec:intro}

We formulate and solve a basic problem in what might be called ``adaptive sequential analysis'': The optimal stopping of the state process   $X(t) = x + B t + W(t), ~ 0 \le t < \infty$ with initial position $x \in \mathbb{R}$ and unobserved  Bernoulli drift $B$, independent of the driving Brownian motion $W(\cdot)$,  in the presence of a fixed cost per unit of elapsed time $c > 0$, and of a terminal cost $k(\cdot)$ which increases with distance from the origin. In other words, the objective of our problem is to minimize
\[
\ex \big[k\big(X(\tau)\big) + c\tau \big]
\]
over all $X$--stopping times $\tau$.
Basic filtering theory reduces this problem, via the familiar innovations process, to Markovian optimal stopping with complete observations for the state process ``centered'' by its starting position; to wit, for $Y(t) = X(t) -x, ~ 0 \le t< \infty$. 

Even after this reduction, the resulting problem is quite hard to solve directly. This becomes possible only after an additional parametrization, now by the initial position of the so-centered process. Starting $Y(\cdot)$ at an arbitrary position $y \in \mathbb R$ allows the auxiliary problem to be solved fairly explicitly for each fixed value of the original starting position $x = X(0)$. We are thus led to optimal continuation and stopping regions for the two-dimensional Markovian problem; ``free boundaries'' in $(x,y)-$space demarcate these regions, and are described, at regular points, by a system of coupled differential equations. With these regions so-constructed, we evaluate at $y=0$ and obtain the optimal continuation and stopping intervals for the original problem in the $x-$variable. 

The paper should also be viewed as a step in a broader program. Our interest in it grew out of the recent work \cite{BGK25}, which treats a related control--plus--stopping problem for a special class of one-dimensional diffusions, but with fully observed drift. The problem studied here isolates the complementary filtering--plus--stopping component. The longer-term objective is to combine these two directions and understand a triple problem in which the decision maker controls the state dynamics, learns an unobserved parameter through partial observations, and chooses an optimal stopping time.
From this point of view, the Bernoulli-drift model considered below is deliberately simple: it is rich enough to display a nontrivial interaction between filtering and stopping, while still allowing for an explicit geometric analysis that may serve as a building block for more general models.

\smallskip
\noindent\emph{Related Work.} Optimal stopping problems under incomplete information arise in at least two rather different forms. In the first, and more statistical, the unknown quantity is itself part of the objective. This is the case in sequential testing, quickest detection, disorder problems, and Bayesian sequential estimation: one stops in order to decide between competing hypotheses, detect a change, or estimate an unobserved parameter, and the terminal loss is formulated directly in terms of this hidden quantity. In the context of Brownian, or more generally diffusive observations, the classical continuous-time sequential testing goes back to Shiryaev \cite{Shiryaev67}; later developments, among others, include finite-horizon and general diffusion generalizations \cite{GapPes04,GapShi11}, Bayesian drift testing and estimation problems \cite{EksKarVai22, EksVai15}, stochastic-deadline and soft-classification  formulations \cite{CGGK25, CampZha24}, and several quickest-detection or disorder-type problems, including \cite{de2022quickest,MilEks24,ErnPes24,ErnPesZho20, Peskir12, PesShi00,Shiryaev63}. In all these settings, learning is tied directly to the criterion being optimized: the posterior distribution enters not only the dynamics of the sufficient statistic, but also the terminal decision or loss. 

The present problem belongs to a different class. Here, the hidden variable is a feature of the system rather than the target of the terminal decision: it changes the law under which the observed state evolves, and hence the value of waiting, but is not itself classified, estimated, or penalized in the objective. In our model, the decision maker learns about \(B\), because \(B\) determines the drift of the observed process; the criterion, however, depends only on elapsed time and on the observed terminal position. Compared with the line of work just mentioned, results of this structural type, especially explicit, seem far less numerous, and most of the closest examples come from financial applications. Related works include, among others, unknown-state stopping problems \cite{EkstromWang2024}, optimal stopping of the sample mean of a Wiener process with unknown drift \cite{SimonsYao1989}, investment timing under incomplete information \cite{DecampsMariottiVilleneuve2005}, optimal liquidation and selling problems with unknown drift \cite{EkstromLu2011,EkstromVaicenavicius2016}, American option problems under partial information \cite{EkstromVannestal2019,Gapeev2012,GapeevAlMotairi2018}, hidden-Markov-model formulations \cite{Gapeev2022}, and stopping problems for diffusions with unknown structural parameters, such as Brownian bridges with unknown pinning point or pinning time \cite{EkstromVaicenavicius2020,Glover2022}. 
The works \cite{DecampsMariottiVilleneuve2005,EkstromLu2011,EkstromVaicenavicius2016,EkstromVannestal2019} are particularly close to our setting. However, unlike our setup, \cite{DecampsMariottiVilleneuve2005,EkstromLu2011,EkstromVannestal2019} deal with geometric Brownian motion, \cite{EkstromLu2011,EkstromVaicenavicius2016,EkstromVannestal2019} consider exponentially discounted problems, and all these works consider maximization problems of linear or the so-called ``hockey-stick'' terminal costs.

Finally, we mention two adjacent directions in which optimal stopping under incomplete information is coupled with an additional feature. The first adds strategic interaction leading to optimal stopping games. Such games, under incomplete or asymmetric information, have attracted considerable attention in recent years. In such problems, uncertainty may come from the environment itself, from asymmetric information or heterogeneous beliefs among the players, or from different players observing different signals. We refer to \cite{DeAngelisEkstrom2020,DeAngelisEkstromGlover2022,DeAngelisGensbittelVilleneuve2021,DeAngelisMerkulovPalczewski2022,GaitsgoriGroenewald2025} and their references.
The second adjacent direction adds control, leading to what we call control--filtering--stopping, or ``triple'', problems.
General formulations of optimal stopping for controlled diffusions under partial observation go back at least to \cite{BenLio82}, as well as to \cite{FujitaNisio1986}. More explicit Brownian learning models with controlled observation or signal acquisition include \cite{BayKra15,CGGK25,DalShi,DaySez16,de2022quickest,EksKar,MilEks24,HarSun}; these are close to the broader program that motivates the present work. However, in all ``triple'' models known to us, the hidden variable remains part of the objective. The future problem we have in mind is different in this respect: the hidden variable will be a structural parameter of the system, and the terminal criterion a cost of the observed state. Here, we treat the corresponding filtering--plus--stopping problem before the control component is introduced; together with the control--plus--stopping analysis of \cite{BGK25}, this provides the two basic building blocks for a later control--plus--filtering--plus--stopping formulation.

\smallskip
\noindent\emph{Preview.} The paper is relatively long and the solution passes through several logical stages, so we give here a brief map of the argument. The procedure has four main steps. First, we filter out the unobserved drift and recenter the observations around the displacement from the initial position. Secondly, we embed the resulting problem into a two-parameter Markovian family \(U(x,y)\), where \(y\) is the displacement variable and \(x\) represents the original starting position. Thirdly, for each fixed \(x\), we solve the resulting one-dimensional optimal stopping problem in the variable \(y\). Finally, we glue these fixed-\(x\) solutions together and study the continuation region in the full \((x,y)\)-plane, and return to the original one-dimensional problem by taking the horizontal slice \(y=0\).

The passage through the \((x,y)\)-plane is one of the main organizing ideas of the paper. Enlarged state spaces are common in optimal stopping under incomplete information; see, for instance, \cite{DecampsMariottiVilleneuve2005,EkstromLu2011,EkstromVaicenavicius2016,EkstromVannestal2019,EkstromWang2024,Gapeev2012}. However, the role of the second coordinate in the present work is quite different. In many filtered stopping problems (in particular, in all mentioned in the previous sentence), the additional coordinate is itself a genuinely dynamic posterior belief, conditional mean, or other sufficient statistic. In the present model, after centering, the embedded stopping problem has only one dynamic coordinate, namely the displacement variable \(y\), while the original starting point \(x\) enters as a parameter through the terminal cost. Therefore, the \((x,y)\)-plane should be viewed less as the state space of a genuinely two-dimensional diffusion, and more as a state-parameter plane. Thus, the solution is obtained by rotating the point of view: we solve vertical one-dimensional sections, glue them into a two-dimensional geometric object, then recover the original problem by looking at a horizontal section. This lift-and-slice methodology, together with the geometric structure it reveals, is one of our main contributions here.

\smallskip
\noindent\emph{Organization.} 
Section~\ref{sec:model} formulates the model and states our main assumptions. Section~\ref{sec:filtering_reduction} performs the filtering reduction, introduces the centered process, and embeds the original problem into the two-parameter family \(U(x,y)\). Section~\ref{sec:one_d_problem} solves the fixed-\(x\) one-dimensional problem in the displacement variable and develops the role of \(\Phi_x\), the one-valley geometry, and the balancing condition. Section~\ref{sec:structure} glues the fixed-\(x\) solutions into a two-dimensional continuation strip and studies its free boundaries, expansion properties, regularity, diagonal contact, and asymptotics. Section~\ref{sec:original_problem} recovers the original stopping problem by restricting to the horizontal line \(y=0\). Section~\ref{sec:examples} presents examples and illustrations of the different regimes covered by the theory, as well as phenomena that may occur outside the structural assumptions; Section \ref{sec:conclusion} concludes by stating several open problems and directions.

\section{Model}\label{sec:model}

We fix a probability space $(\Omega, \fc, \pr)$, supporting a Bernoulli random variable $B$ with $\pr(B=1) = p = 1 - \pr(B = -1)$ for some $ 0 < p < 1$, and an independent standard Brownian motion $W = \{W(t), \, 0 \le t < \infty\}$. Neither \(B\) nor \(W\) is observed directly.
Instead, we observe the \textit{state process}
\begin{equation}\label{diffusion_process_def}
    X(t)=x+Bt+W(t),\qquad 0\le t<\infty,
\end{equation}
where \(x\in\mathbb R\) is a given initial position, and denote by \(\mathbb F^X=\{\fc^X(t),\,0\le t<\infty\}\) the filtration it generates. We also denote by \(\T^X\) the collection of all \(\mathbb F^X\)-stopping times.

For a given initial position \(x\in\mathbb R\), the objective is to minimize
\begin{equation}\label{objective_function_def}
    J(x,\tau)
    \coloneqq
    \ex\Big[k(X(\tau))+c\,\tau\Big]
\end{equation}
over all \(\tau\in\T^X\), with the convention $J(x, \tau) = \infty$ whenever $\ex[\tau] = \infty$. Here, \(c>0\) is a ``running'' cost per unit of time, and \(k(\cdot)\) is a ``terminal'' cost.
Throughout the paper, we impose the following standing assumption:

\begin{enumerate}[label=\textbf{(A\arabic*)}, leftmargin=4em]
    \item The function $k: \mathbb{R} \to [0, \infty)$ is even, convex, and belongs to
    \(C^2((0,\infty))\). \label{ass_1}
\end{enumerate}

The evenness of \(k(\cdot)\) means that the terminal cost depends only on the distance from the origin, and is assumed without loss of generality. The assumption \(C^2((0,\infty))\) deliberately does not impose differentiability at zero. This is important for costs such as \(|x|\) and \(e^{\lambda |x|}\), and allows the right derivative
\(
    k'(0+)\coloneqq \lim_{s\downarrow0}k'(s)
\)
to be positive. 
This quantity will play a vital role in the diagonal-contact criterion below. As we shall see in Section \ref{sec:structure}, the nature of the solution to this problem depends crucially on how this ``marginal cost'' $k'(0+)$ compares to the continuation cost $c > 0$.

For several structural results we shall impose one or both of the following assumptions.

\begin{enumerate}[label=\textbf{(A\arabic*)}, leftmargin=4em, itemsep=-10pt]
\setcounter{enumi}{1}
    \item
    \(k'(x)>0\) for all \(x>0\), and the function $P_k$ below is convex on \((0,\infty)\):
    \begin{equation}\label{Qk_def}
        P_k(x)\coloneqq \frac{k''(x)+2c}{2k'(x)}, \qquad x \in (0, \infty).
    \end{equation}
    \label{ass_2}

    \item
    \(k'(x)>0\) for all \(x>0\), and \(k'\) is log-concave on
    \((0,\infty)\); equivalently, the function
    \(k''/k'\)
    is non-increasing on \((0,\infty)\). \label{ass_3}
\end{enumerate}

We emphasize that assumptions \ref{ass_2}--\ref{ass_3} are not intended to be sharp. Rather, they are convenient and easy-to-check sufficient conditions, under which all of the structural properties established in this paper hold. In several places below, we shall work under weaker assumptions, which are closer to the true mechanism behind the corresponding results; however, these weaker hypotheses are more technical and most naturally formulated in terms of auxiliary objects introduced only later. We note that all our assumptions \ref{ass_1}--\ref{ass_3} are satisfied by the power and exponential costs
\[
    k(x) = \alpha |x|^\beta + \gamma, \quad \alpha > 0, \ \beta \geq 1, \ \gamma \ge 0;
\qquad \qquad 
    k(x) = \alpha\big(e^{\lambda |x|} - \gamma \big), \quad \alpha,\lambda > 0, \ \gamma \le 1.
\]
These will be the main examples considered throughout the paper.

We are now ready to study the problem \eqref{objective_function_def}, and denote its value function by
\begin{equation}\label{value_function_def}
    V(x) \coloneqq \inf_{\tau \in \T^X} J(x, \tau), \qquad x \in \mathbb R.
\end{equation}

\section{Filtering and Two-Dimensional Markovian Embedding}\label{sec:filtering_reduction}

As is typical in problems with incomplete information, we start by reducing the partially observed problem to a completely observed one. To do so, we introduce
\begin{equation}\label{conditional_exp_for_B}
    \widehat B(t)\coloneqq \ex\left[B\mid \fc^X(t)\right],
    \qquad 0\le t<\infty,
\end{equation}
the conditional mean of the unobserved drift (commonly referred to below as ``posterior drift''). By standard filtering theory (see, e.g., Chapter 3 of Bain and Crisan \cite{BainCrisan}), the observation process admits the $\mathbb F^X$--semimartingale representation
\begin{equation}\label{filtered_sde}
    X(t)=x+\int_0^t \widehat B(s)\,ds+N(t),
    \qquad 0\le t<\infty,
\end{equation}
where $N(\cdot)$ is the so-called \textit{innovations process}, which is a standard Brownian motion with respect to the observation filtration $\mathbb F^X$; see, for instance, \cite[Proposition 2.30 on p. 33]{BainCrisan}.

In the Bernoulli case, an explicit Bayes calculation (e.g., \cite[p. 9]{EksKarVai22}) gives
\begin{equation}\label{def_G}
    \widehat B(t)=G\big(X(t)-x\big),
    \qquad \text{ with } \qquad
    G(z)\coloneqq
    \frac{pe^z-(1-p)e^{-z}}{pe^z+(1-p)e^{-z}},
    \qquad z\in\mathbb R,
\end{equation}
or equivalently \(G(z)=\tanh(z-y_0)\), where
\begin{equation}\label{def_y0}
    y_0\coloneqq \frac12\log\left(\frac{1-p}{p}\right)
\end{equation}
is the unique zero of \(G\). We shall use repeatedly the elementary properties
\begin{equation}\label{G_properties}
    |G|<1,\qquad
    G'>0,\qquad
    G''+2GG'=0,\qquad
    G^2+G'=1.
\end{equation}
In particular, the function \(G\) of \eqref{def_G} is strictly convex on \((-\infty,y_0)\) and strictly concave on
\((y_0,\infty)\).

\begin{remark}
Although in the Bernoulli case one could equivalently parametrize the filter by the posterior probability \(\Pi(t)\coloneqq\mathbb P(B=1\mid\fc^X(t))\), since \(\widehat B(t)=2\Pi(t)-1\), we deliberately formulate the problem in terms of the posterior drift \(\widehat B\). This parametrization places the filtered quantity directly in the original dynamics and the infinitesimal generator; whereas, in combination with the explicit representation \eqref{def_G} and the identities \eqref{G_properties}, makes the subsequent variational and free-boundary analysis considerably more explicit than is typical in optimal stopping problems under incomplete information. It also seems better suited to extensions beyond two-point priors: if \(\bm{\pi}_t(db)\) denotes the conditional law of \(B\) given \(\fc^X(t)\), then the filtered drift is its posterior mean \(\widehat B(t)=\int b\,\bm{\pi}_t(db)\). For a general prior, this coefficient will typically depend on both time and displacement, and the special identities \eqref{G_properties} will no longer be available; nevertheless, we expect the filtering, centering, and Markovian-embedding steps of the present approach to remain robust.
\end{remark}

The representation \eqref{def_G} shows that the posterior drift depends on the observations only through its displacement from the initial position. We therefore introduce the process
\[
    Y(t)\coloneqq X(t)-x,\qquad 0\le t<\infty.
\]
Since \(X(\cdot)\) and \(Y(\cdot)\) generate the same filtration, the class of admissible stopping times is unchanged. In the new coordinate, \eqref{filtered_sde} becomes
\begin{equation}\label{sde_Y}
    Y(t)=\int_0^t G(Y(s))\,ds+N(t),
    \qquad 0\le t<\infty;
\end{equation}
and, with \(\T^Y=\T^X\) the collection of stopping times of the common observation filtration, the original value function \eqref{value_function_def} takes the form
\begin{equation}\label{value_func_via_Y}
    V(x)
    =
    \inf_{\tau\in\T^Y}
    \ex\Big[k\big(x+Y(\tau)\big)+c\,\tau\Big].
\end{equation}

\noindent \textit{Markovian Embedding:} It is useful to embed \eqref{value_func_via_Y} into a Markovian family, in which the displacement process is allowed to start from an arbitrary point. Accordingly, for \(y\in\mathbb R\), we let \(Y_y(\cdot)\) be the solution of
\begin{equation}\label{sde_Y_arbitrary}
    Y_y(t)=y+\int_0^t G(Y_y(s))\,ds+N(t),
    \qquad 0\le t<\infty,
\end{equation}
and note that, since \(y \in \mathbb{R}\), the filtrations generated by \(Y_y(\cdot)\) and \(N(\cdot)\) coincide.
Thus, the notation \(\T^Y\) for the corresponding stopping times is unambiguous, and the resulting value function is
\begin{equation}\label{2d_value_func_def}
    U(x,y)
    \coloneqq
    \inf_{\tau\in\T^Y}
    \ex\Big[k\big(x+Y_y(\tau)\big)+c\,\tau\Big],
    \qquad (x,y)\in\mathbb R^2.
\end{equation}
The original problem is recovered by restricting this two-parameter value function to the horizontal line \(y=0\), via 
\(
    V(x)=U(x,0),\, x\in\mathbb R.
\)
The advantage of the problem \eqref{2d_value_func_def} is that the dynamics of \(Y_y(\cdot)\) do not involve \(x\). Thus, for fixed \(x\), the embedded problem is a one-dimensional stopping problem in the displacement variable \(y\), with \(x\) acting as a parameter. We first solve this fixed-\(x\) problem in Section~\ref{sec:one_d_problem}; then study how these one-dimensional solutions fit together in the \((x,y)\)-plane in Section~\ref{sec:structure}; and finally return to the original problem of \eqref{value_function_def} in the \(x\)-variable in Section~\ref{sec:original_problem}.

\section{The Embedded One-Dimensional Problem}\label{sec:one_d_problem}

In this section, we solve the embedded stopping problem
\eqref{2d_value_func_def} with the parameter \(x\) fixed. This fixed-\(x\) problem already captures the main free-boundary aspects. Under suitable structural assumptions on the terminal cost  (for instance, under the sufficient condition \ref{ass_2}), we show that the continuation region is either empty or a single bounded interval. In the latter case, the endpoints are characterized by a free-boundary system, and the value function has a fairly-explicit form.

\noindent \textit{Preview:} In the first subsection we introduce the fixed-\(x\) notation, and the relevant state space and variational inequalities. In the second subsection we prove the verification result. A key point there is an \textit{a priori} boundedness statement, which shows from first principles that immediate stopping is optimal for all sufficiently large values of the displacement variable. 
The remaining analysis is split into three parts: in the third subsection we introduce and study the function \(\Phi_x\) of \eqref{def_phi}, which controls the geometry of the free boundaries. In the fourth subsection we construct the fixed-\(x\) solution under the resulting one-interval geometry. Finally, in the fifth subsection we present a convenient sufficient condition, formulated directly in terms of the terminal cost, which guarantees this geometry. The main result of the section is Theorem
\ref{th_fixed_x_solution}.

\subsection{Fixed-\(x\) formulation and variational inequalities}\label{subsec_var_ineqaulities}

Throughout this subsection, we fix \(x\ge0\). This entails no loss of generality for the original problem, because the case \(x<0\) is obtained from the symmetry of the terminal cost by changing \(p\) to \(1-p\). Starting from \(y\ge -x\), it is also enough to consider the process \(Y_y(\cdot)\) up to its first hitting time of the point \(-x\). Indeed, since \(k(x+\cdot)\) has its minimum at \(-x\), replacing any stopping time \(\sigma\) by \(\sigma\wedge\tau_{-x}\) cannot increase the expected cost, where \begin{equation}\label{def_hitting_time}
    \tau_{\xi}\coloneqq \inf\{t\ge0:Y_y(t)\le \xi\}.
\end{equation}
Thus, throughout this subsection, the natural state space will be
\(
I_x\coloneqq [-x,\infty).
\)
For convenience, we shall use the shorthand notation
\(U_x(\cdot)\coloneqq U(x,\cdot)\) and \(k_x(\cdot)\coloneqq k(x+\cdot)\).
We shall use the same convention for several auxiliary objects introduced below, and return to the full two-dimensional notation when the dependence on \(x\) becomes important. For instance, we shall denote here the \textit{stopping} and \textit{continuation} regions by
\begin{equation}\label{def_continuation_set}
    \mathcal S_x
    \coloneqq
    \{y\in I_x:U_x(y)=k_x(y)\}
    \quad \text{ and } \quad
    \mathcal C_x
    \coloneqq
    \{y\in I_x:U_x(y)<k_x(y)\}.
\end{equation}

We solve the problem \eqref{2d_value_func_def} using variational inequalities. To formulate those, and with \(G\) defined in \eqref{def_G}, we denote the infinitesimal generator of the diffusion in \eqref{sde_Y_arbitrary} by
\begin{equation}\label{def_generator_fixed_x}
    \mathcal L f(y)
    \coloneqq
    \frac12 f''(y)+G(y)f'(y), \qquad f\in C^2((-x,\infty)).
\end{equation} 
The obstacle problem associated with our stopping problem leads to the following variational inequalities. A sufficiently regular candidate function \(u(\cdot)\) should satisfy, wherever its derivatives $u'(\cdot)$ and \(u''(\cdot)\) exist, the system
\begin{enumerate}[label=\textbf{(\roman*)}]
    \item
    \(
        k_x(y)-u(y)\ge0;
    \)
    \label{var_ineq_1}

    \item
    \(
        \mathcal L u(y)+c
        =
        \frac12 u''(y)+G(y)u'(y)+c
        \ge0;
    \)
    \label{var_ineq_2}

    \item
    \(
        \big(k_x(y)-u(y)\big)\big(\mathcal L u(y)+c\big)=0.
    \)
    \label{var_ineq_3}
\end{enumerate}
These are the standard obstacle-type variational inequalities for optimal stopping problems; see \cite{ShiryaevRules}. 
The analysis can now be split into two essentially independent parts. In the first, we have to solve the above system of inequalities and provide a candidate for the value function. In the second, we show that, if a function satisfies \ref{var_ineq_1}--\ref{var_ineq_3}, it is indeed the value function of the problem. Since the verification part is shorter and more direct, we start with it, and only then solve the system.

\subsection{Verification}

We start with a useful a priori bound which shows that, for each fixed \(x\), the continuation region cannot extend arbitrarily far to the right. The proof uses only the scale function of the diffusion \(Y_y(\cdot)\) and does not rely on free-boundary considerations.

\begin{Prop}\label{prop_stopping_for_large_y}
    Fix \(x\ge0\). Immediate stopping is optimal for all sufficiently large displacement values \(y\); more precisely, there exists a threshold \(y_x^* \in (-x, \infty)\) such that
    \[
        U_x(y)=k_x(y),\qquad y\ge y_x^*.
    \]
\end{Prop}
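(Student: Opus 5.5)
We will show that for $y$ large, every stopping time $\tau$ satisfies $\ex[k_x(Y_y(\tau))+c\tau]\ge k_x(y)$. The argument compares the gain from possibly moving toward $-x$ with the time cost. Since $k$ is convex and even, $k_x$ is non-decreasing on $I_x$. By the reduction above, it suffices to consider $\tau\le\tau_{-x}$ with $\ex[\tau]<\infty$.

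\emph{Step 1 (cost increase above $y$).} Fix a level $a\in(y_0\vee(-x),\infty)$ and restrict to $y>a+1$. Let $\rho\coloneqq\tau\wedge\tau_a$. The loss on the event where the process stays above $a$ is handled by convexity:
\[
k_x(Y_y(\tau))\ge k_x(y)+k_x'(y)\,(Y_y(\tau)-y),
\]
with $k_x'(y)\ge k'(x+a)\ge 0$. Taking expectations,
\[
\ex\big[Y_y(\rho)-y\big]=\ex\!\int_0^\rho G(Y_y(s))\,ds\ \ge\ G(a)\,\ex[\rho]\ \ge 0,
\]
since $G>0$ above $y_0$. This is valid because $\ex[\rho]<\infty$ and $N$ is a Brownian motion, so optional stopping applies.

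\emph{Step 2 (bounding the possible gain).} The only way to gain is to reach $a$ before stopping. On that event the terminal cost can drop by at most $k_x(y)-k(0)$ in the worst case. Write $D\coloneqq\ex[k_x(Y_y(\tau))]-k_x(y)$. On $\{\tau<\tau_a\}$ the Step 1 bound gives a nonnegative contribution in expectation. On $\{\tau_a\le\tau\}$ we bound the loss crudely by $-(k_x(y)-k(0))$. Hence
\[
D\ \ge\ -\big(k_x(y)-k(0)\big)\,\pr(\tau_a<\infty)\;+\;(\text{contribution on }\{\tau<\tau_a\}).
\]
The Step 1 linear term on $\{\tau<\tau_a\}$ needs care: $Y_y(\tau)-y$ can be negative there. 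So it is cleaner to apply convexity directly via Itô/Dynkin on $[0,\rho]$. Any negative displacement before $\tau_a$ is bounded below by $a-y$.

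\emph{Step 3 (scale-function estimate).} The scale density of $Y$ is $s'(z)=\exp(-2\int^z G)=C/\cosh^2(z-y_0)$. Therefore
\[
\pr(\tau_a<\infty)=\frac{s(\infty)-s(y)}{s(\infty)-s(a)}\ \approx\ C'e^{-2(y-a)},
\]
which decays exponentially in $y$. We then need the exponential decay to beat the growth of $k_x(y)-k(0)$. This is the main obstacle, because $k$ may itself grow exponentially, as in $e^{\lambda|x|}$ with $\lambda>2$. The crude bound then fails.

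To fix this, we refine Step 2 so that only the cost reduction actually achieved is counted. Rather than bounding by $k_x(y)$, we use Dynkin's formula on $k_x$ itself up to $\tau$:
\[
\ex[k_x(Y_y(\tau))]-k_x(y)+c\,\ex[\tau]=\ex\!\int_0^\tau\big(\mathcal L k_x+c\big)(Y_y(s))\,ds.
\]
Stopping is then optimal at $y$ if $\mathcal L k_x+c\ge0$ on $[y,\infty)$ and the process cannot profitably exit that set. For $z>y_0$ we have $\mathcal Lk_x(z)=\tfrac12k''+Gk'\ge0$, so $\mathcal L k_x+c\ge c>0$ on $(y_0,\infty)$. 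The negative part of $\mathcal Lk_x+c$ lives on the compact set $[-x,y_0]$; call it $-M\le 0$. Hence
\[
\ex[k_x(Y_y(\tau))+c\tau]-k_x(y)\ \ge\ c\,\ex\big[\text{time in }(y_0,\infty)\big]-M\,\ex\big[\text{time in }[-x,y_0]\big],
\]
with $\tau\le\tau_{-x}$. To reach $[-x,y_0]$ from $y$, the process must spend time crossing $(y_0+1,y_0+2)$; that crossing occupation time is bounded below by a fixed multiple $\kappa>0$ times the probability of arrival. This follows from the strong Markov property and the Green function.

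Conversely, once the process reaches $y_0+1$, the expected time it spends in $[-x,y_0]$ before $\tau_{-x}$ is bounded by a constant $K$, again by a Green function argument. So the right side is at least $(c\kappa-MK)\,\pr(\text{reach }y_0+1)$, but this sign is not yet clearly positive. The way out is to use the extra time spent descending from $y$ to $y_0+2$. That crossing costs expected time growing linearly in $y-y_0$ on the arrival event, since the drift opposes the motion with $G\ge G(y_0+1)>0$. Choosing $y_x^*$ large enough that this linear term exceeds $MK/c$ gives nonnegativity, hence $U_x=k_x$ on $[y_x^*,\infty)$.
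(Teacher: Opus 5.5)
Your Dynkin reduction is sound as far as it goes. Let $\sigma$ denote the first hitting time of $y_0+1$. Your argument reduces the claim to the inequality
\[
c\,\ex[\tau\wedge\sigma]\;\ge\; MK\,\pr(\sigma\le\tau)\qquad\text{for every admissible }\tau,
\]
and this inequality is where the argument breaks. You say the descent from $y$ to $y_0+2$ ``costs expected time growing linearly in $y-y_0$ on the arrival event''. That describes the \emph{unconditional} hitting time, roughly $\ex[\sigma\mid\sigma<\infty]\approx y-y_0$. The event that matters here is $\{\sigma\le\tau\}$, and the stopper chooses it.

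For example, let $\tau$ stop as soon as the path leaves a narrow tube around the line $t\mapsto y-vt$, with slope $v$ of order $y-y_0$. Then $\pr(\sigma\le\tau)>0$, while $\sigma=O(1)$ on $\{\sigma\le\tau\}$. So no per-arrival linear lower bound can hold.

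The required inequality may still be true, but only because the stopper also pays time on the aborted paths. Proving it uniformly over all $\tau$ is itself an optimal stopping statement of the same type as the proposition. Neither the Green function nor the strong Markov property gives it, and the same objection applies to your $\kappa$-bound for the crossing of $(y_0+1,y_0+2)$.

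A secondary point: Dynkin's formula for $k_x$ up to an arbitrary $\tau$ with $\ex[\tau]<\infty$ needs justification when $k'$ grows fast. You cannot yet restrict to spatially bounded $\tau$, because Corollary~\ref{cor_bounded_st_times} rests on this very proposition.

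The missing idea is to use the scale function directly. With $S=G$, which is concave on $[y_0,\infty)$ and convex on $(-\infty,y_0]$, the paper takes $y$ so large that the tangent line $L_y$ at $y$ satisfies $L_y(-x)\ge S(-x)$. Then $L_y\ge S$ on all of $[-x,\infty)$. Optional sampling for the bounded martingale $S(Y_y)$ gives $\ex[Y_y(\tau)]\ge y$ for every $\tau\le\tau_{-x}$ with finite mean. Jensen's inequality and the monotonicity of $k_x$ on $I_x$ then give $\ex[k_x(Y_y(\tau))]\ge k_x(y)$. This uses neither $c$ nor any growth restriction on $k$, so the difficulty you raised in Step~3 never arises.

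The same device would also repair your route. For $y>y_0+1$, apply it to $\rho\coloneqq\tau\wedge\sigma$, and use the strict gap $\delta_y\coloneqq L_y(y_0+1)-S(y_0+1)>0$ together with $G<1$ and Wald's identity. This yields
\[
\ex[\rho]\;\ge\;\ex[Y_y(\rho)]-y\;\ge\;\frac{\delta_y}{G'(y)}\,\pr(\sigma\le\tau),
\]
a ratio of order $e^{2(y-y_0)}$. Once you have this bound, however, the Dynkin decomposition is superfluous.
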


\begin{proof}
    We claim that it suffices to establish the following auxiliary statement: there exists $y^*_x$ large enough such that, for all $y \ge y^*_x$ and all stopping times $\tau \in \mathcal{T}^Y$ with finite expectation and satisfying $\tau \le \tau_{-x}$ as in \eqref{def_hitting_time}, we have
    \begin{equation}\label{auxiliary_large_y_submartingale}
        \ex[Y_y(\tau)]\ge y.
    \end{equation}

    Indeed, assume that \eqref{auxiliary_large_y_submartingale} has been established for some $y^*_x$. Fix any stopping time \(\tau\in\T^Y\). Our goal is to show that immediate stopping is not worse than choosing \(\tau\). We might assume that $\tau$ has finite expectation, and that $\tau \le \tau_{-x}$ holds almost surely in the notation of \eqref{def_hitting_time}. Indeed, if $\ex [\tau] = \infty$, the expected cost is infinite and there is nothing to prove; whereas replacing \(\tau\) by \(\tau\wedge\tau_{-x}\) cannot increase the cost, as \(k_x(\cdot)\) is minimized at \(-x\).
    Now, for any such \(\tau\) the inequality \eqref{auxiliary_large_y_submartingale}, the convexity of \(k_x(\cdot)\), and the fact that \(k_x(\cdot)\) is non-decreasing on \(I_x\), give
    \[
        \ex\big[k_x(Y_y(\tau))+c\tau\big]
        \ge
        \ex\big[k_x(Y_y(\tau))\big]
        \ge
        k_x\big(\ex[Y_y(\tau)]\big)
        \ge
        k_x(y).
    \]
    Immediate stopping attains the value \(k_x(y)\), so \(U_x(y)=k_x(y)\) holds for all \(y\ge y_x^*\), which is exactly the desired statement. Thus, it suffices to prove \eqref{auxiliary_large_y_submartingale}.

    To do so, let \(S\) be the scale function of \(Y_y(\cdot)\), given by
    \begin{equation}\label{scale_function}
        S(y)
        \coloneqq
        \int_{y_0}^{y}
        \exp\left(
            -2\int_{y_0}^{\xi}G(\zeta)\,d\zeta
        \right)d\xi,
    \end{equation}
    where $y_0$ is defined in \eqref{def_y0}.
    Note that
    \(
        S'(y)>0,
        S''(y)=-2G(y)S'(y),
    \)
    and, with the present normalization, \(S\equiv G\). In particular, \(S\) is
    bounded, strictly increasing, strictly convex on \((-\infty,y_0)\), and
    strictly concave on \((y_0,\infty)\).

    We claim that there exists \(y_x^*>-x\) such that, for every \(y\ge y_x^*\),
    the tangent line
    \(
        L_y(z)\coloneqq S(y)+S'(y)(z-y)
    \)
    dominates \(S\) on the whole interval \(I_x=[-x,\infty)\):
    \begin{equation}\label{scale_supporting_line}
        S(z)\le L_y(z),\qquad z\ge -x.
    \end{equation}
    To see this, choose \(y_x^*\ge y_0\vee(-x)\) so large that
    \[
        L_y(-x)\ge S(-x),\qquad y\ge y_x^*.
    \]
    This is possible because \(S(y)\to1\) and \(S'(y)(y+x)\to0\) as \(y\to\infty\), whereas \(S(-x)<1\). For such large \(y\), concavity of \(S\) on \([y_0,\infty)\) implies that \(L_y\) lies above \(S\) on \([y_0,\infty)\). If \(-x<y_0\), then \(S\) is convex on \([-x,y_0]\); and \(L_y\) dominates \(S\) at both endpoints \(-x\) and \(y_0\), thus on the whole interval \([-x,y_0]\) as well. The geometry is illustrated in Figure~\ref{fig:scale_function}.

    \begin{figure}[ht]
        \centering
        \includegraphics[width=.75\linewidth]{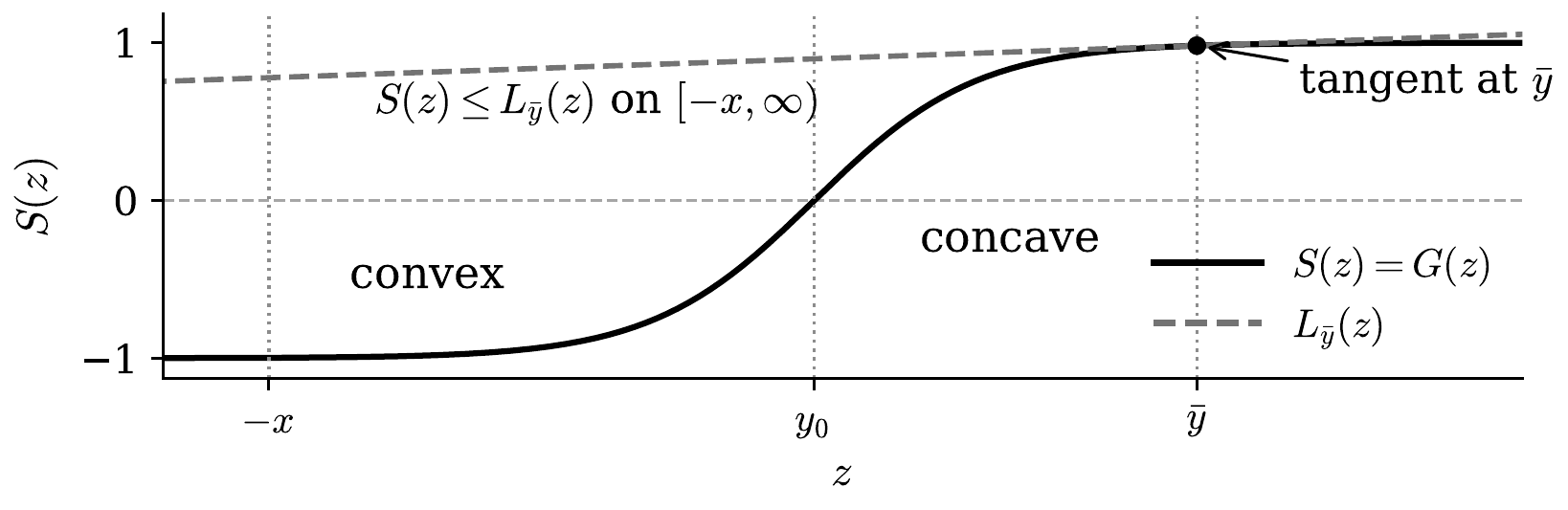}
        \caption{The scale function and the supporting tangent line.}
        \label{fig:scale_function}
    \end{figure}

    This proves
    \eqref{scale_supporting_line}. Now we fix \(y\ge y_x^*\), and let \(\tau\le\tau_{-x}\) be a stopping time with finite expectation. Since \(S(Y_y(\cdot))\) is a bounded martingale (see, e.g., \cite[Propositions 3.4-3.5 on pp. 302-303]{RevuzYor}), optional sampling gives
    \(
        \ex\big[S(Y_y(\tau))\big]=S(y).
    \)
    Moreover, \(Y_y(\tau)\) is integrable, because \(|G|\le1\) and \(\ex[\tau]<\infty\). 
    Thus, applying \eqref{scale_supporting_line} to \(Y_y(\tau)\) and taking expectations, we obtain
    \[
        S(y)
        =
        \ex\big[S(Y_y(\tau))\big]
        \le
        S(y)+S'(y)\big(\ex[Y_y(\tau)]-y\big).
    \]
    Since \(S'(y)>0\), this yields \eqref{auxiliary_large_y_submartingale} and completes the proof.
\end{proof}

\begin{Cor}\label{cor_bounded_st_times}
    Fix \(x\ge0\), and let \(y_x^*\) be as in Proposition \ref{prop_stopping_for_large_y}. In the fixed-\(x\) problem \eqref{2d_value_func_def} with value $U_x(\cdot)$, it is enough to minimize over stopping times \(\tau\) satisfying
    \begin{equation}\label{def_double_hitting_time}
        \tau\le \tau_{-x,y_x^*} \quad 
        \text{ a.s., where }
        \qquad
        \tau_{\xi,\zeta}
        \coloneqq
        \inf\{t\ge0:Y_y(t)\notin(\xi,\zeta)\}.
    \end{equation}
\end{Cor}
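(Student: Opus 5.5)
Given an arbitrary stopping time \(\tau\in\T^Y\), the plan is to build a modified stopping time \(\tau'\le\tau_{-x,y_x^*}\) whose expected cost is no larger than that of \(\tau\), for every starting point \(y\in I_x\). Two reductions are available. The first comes from the beginning of the subsection: since \(k_x\) is minimized at \(-x\), replacing \(\tau\) by \(\tau\wedge\tau_{-x}\) does not increase the cost. We may also assume \(\ex[\tau]<\infty\), since otherwise the cost is infinite. The second reduction handles the upper barrier. We set \(\sigma\coloneqq\tau_{-x,y_x^*}\). If \(y\ge y_x^*\), then \(\sigma=0\) and Proposition \ref{prop_stopping_for_large_y} gives \(U_x(y)=k_x(y)\), so immediate stopping is optimal. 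It therefore suffices to treat \(y\in(-x,y_x^*)\) and to show that \(J\) evaluated at \(\tau\wedge\sigma\) is no larger than at \(\tau\).

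The key step is a strong Markov argument at \(\sigma\). On \(\{\sigma<\tau\}\) we have \(Y_y(\sigma)\ge y_x^*\) (in fact \(=y_x^*\) by path continuity, since the lower exit at \(-x\) already stops \(\tau\)). Conditionally on \(\fc(\sigma)\), the residual time \(\tau-\sigma\) is a stopping time for the shifted process. Applying Proposition \ref{prop_stopping_for_large_y} from the point \(Y_y(\sigma)\) gives
\[
\ex\big[k_x(Y_y(\tau))+c(\tau-\sigma)\,\big|\,\fc(\sigma)\big]\ \ge\ U_x(Y_y(\sigma))=k_x(Y_y(\sigma))\quad\text{on }\{\sigma<\tau\}.
\]
We then add \(c\sigma\), take expectations, and combine with the trivial identity on \(\{\tau\le\sigma\}\). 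This yields \(\ex[k_x(Y_y(\tau\wedge\sigma))+c(\tau\wedge\sigma)]\le\ex[k_x(Y_y(\tau))+c\tau]\).

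The main obstacle is justifying the conditional step rigorously. The residual \(\tau-\sigma\) is a randomized stopping time for the post-\(\sigma\) process: it depends on the pre-\(\sigma\) path as well as on the shifted Brownian increments. To handle this, I would avoid general regular conditional laws and instead rerun the proof of Proposition \ref{prop_stopping_for_large_y} directly in conditional form. The scale function \(S(Y_y(\cdot))\) is a bounded martingale, so optional sampling between \(\sigma\) and \(\tau\) gives \(\ex[S(Y_y(\tau))\mid\fc(\sigma)]=S(Y_y(\sigma))\). The tangent-line bound \eqref{scale_supporting_line} at \(Y_y(\sigma)\ge y_x^*\) then yields \(\ex[Y_y(\tau)\mid\fc(\sigma)]\ge Y_y(\sigma)\) on \(\{\sigma<\tau\}\); integrability holds since \(\ex[\tau]<\infty\) and \(|G|\le1\). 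Conditional Jensen applied to the convex, nondecreasing function \(k_x\) on \(I_x\) finishes the argument. This route uses only what the proposition's proof already established, with no appeal to the strong Markov property for randomized times.
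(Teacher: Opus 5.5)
Your proof is correct. It uses the same reduction as the paper: truncate at \(\tau_{-x}\), then at the first exit \(\sigma\) from \((-x,y_x^*)\). The difference is in how you justify the key step.

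The paper simply invokes Proposition \ref{prop_stopping_for_large_y} together with the strong Markov property at the hitting time of \(y_x^*\). You instead rerun the proposition's proof in conditional form:
\begin{itemize}
\item optional sampling of the bounded martingale \(S(Y_y(\cdot))\) between \(\sigma\) and \(\tau\);
\item the tangent-line bound \eqref{scale_supporting_line} at \(y_x^*\);
\item conditional Jensen for \(k_x\), after which you discard the nonnegative term \(c(\tau-\sigma)\).
\end{itemize}

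What your route buys is a self-contained argument that avoids the point you correctly flag. The post-\(\sigma\) portion of \(\tau\) depends on the pre-\(\sigma\) path. Comparing its cost with \(U_x(Y_y(\sigma))\) therefore implicitly uses the fact that such randomized stopping times cannot beat the value function, which the paper treats as standard. Your version needs only optional sampling for a uniformly integrable martingale and conditional Jensen.

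What it costs is generality. Your argument relies on the tangent-line domination \eqref{scale_supporting_line}, which is a property of the particular \(y_x^*\) constructed in the proof of Proposition \ref{prop_stopping_for_large_y}. The paper's dynamic-programming argument uses only the conclusion \(U_x=k_x\) on \([y_x^*,\infty)\). It therefore works for any threshold beyond which immediate stopping is optimal, for instance the actual upper boundary \(b_x\) once that is known. Since the corollary refers to the \(y_x^*\) of that proposition, this restriction does no harm here.
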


\begin{proof}
    As above, replacing a stopping time by its minimum with \(\tau_{-x}\) cannot increase the cost. If the process reaches \(y_x^*\), Proposition \ref{prop_stopping_for_large_y} and the strong Markov property imply that immediate stopping is optimal from that point onward. Thus, continuing beyond the first exit from \((-x,y_x^*)\) cannot improve the expected cost.
\end{proof}

We state now the verification result in the form needed below.

\begin{Prop}\label{prop_verification}
    Fix \(x\ge0\). 
    Let \(\widehat U:I_x\to\mathbb R\) be continuous on \(I_x\) and \(C^1\) on \((-x,\infty)\). Assume that \(\widehat U'\) extends to \(I_x\) and is absolutely continuous on every compact interval \([-x,r]\), \(r>-x\). Suppose that \(\widehat U\) satisfies \ref{var_ineq_1}--\ref{var_ineq_3} of subsection \ref{subsec_var_ineqaulities} on \(I_x\), wherever \(\widehat U''\) exists. Suppose also that the candidate continuation region
    \[
        \widehat{\mathcal C}
        \coloneqq
        \{y\in I_x:\widehat U(y)<k_x(y)\}
    \]
    is contained in \((-x,R)\) for some \(R<\infty\). Then
    \(
        \widehat U(y)=U_x(y),\ y\in I_x,
    \)
    and an optimal stopping time is given by 
    \[
        \tau^*
        \coloneqq
        \inf\{t\ge0:Y_y(t)\notin\widehat{\mathcal C}\}.
    \]
\end{Prop}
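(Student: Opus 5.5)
The plan is the standard two-sided verification: prove \(\widehat U\le U_x\) by a generalized Itô argument, and \(\widehat U\ge U_x\) by exhibiting \(\tau^*\) as attaining the candidate.

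\textbf{Lower bound \(\widehat U\le U_x\).} By Corollary \ref{cor_bounded_st_times} it suffices to consider stopping times \(\tau\le\tau_{-x,y_x^*}\). Enlarging \(R\) if necessary, we may also assume \(\tau\le\tau_{-x,R'}\) with \(R'\coloneqq R\vee y_x^*\). Such \(\tau\) have finite expectation: on the bounded interval \([-x,R']\), the diffusion \eqref{sde_Y_arbitrary} has bounded drift and unit volatility, so its exit time is integrable. Since \(\widehat U\) is \(C^1\) with absolutely continuous derivative on \([-x,R']\), the Itô formula applies in its generalized form (Itô--Tanaka, or a mollification argument using that \(Y_y\) spends zero Lebesgue time at any point). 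It gives
\[
\widehat U(Y_y(\tau))=\widehat U(y)+\int_0^\tau \mathcal L\widehat U(Y_y(s))\,ds+\int_0^\tau \widehat U'(Y_y(s))\,dN(s).
\]
The integrand \(\widehat U'\) is bounded on \([-x,R']\) and \(\ex[\tau]<\infty\), so the stochastic integral has zero mean. Using \ref{var_ineq_2}, which holds a.e. and hence a.e. in time along the path by the occupation-time formula, we get \(\ex[\widehat U(Y_y(\tau))]\ge \widehat U(y)-c\,\ex[\tau]\). Combined with \ref{var_ineq_1}, this yields \(\ex[k_x(Y_y(\tau))+c\tau]\ge\widehat U(y)\). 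Taking the infimum gives \(U_x\ge\widehat U\).

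\textbf{Upper bound \(\widehat U\ge U_x\).} Take \(\tau^*\). Since \(\widehat{\mathcal C}\subset(-x,R)\), we have \(\tau^*\le\tau_{-x,R}\), so \(\ex[\tau^*]<\infty\) by the same argument as above. On \(\widehat{\mathcal C}\), condition \ref{var_ineq_3} gives \(\mathcal L\widehat U+c=0\) a.e., so the Itô identity becomes an equality: \(\ex[\widehat U(Y_y(\tau^*))]=\widehat U(y)-c\,\ex[\tau^*]\). Moreover \(Y_y(\tau^*)\notin\widehat{\mathcal C}\), because \(\widehat{\mathcal C}\) is relatively open by continuity, so \(\widehat U(Y_y(\tau^*))=k_x(Y_y(\tau^*))\). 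Hence \(J\)-cost of \(\tau^*\) equals \(\widehat U(y)\ge U_x(y)\). Together with the lower bound, this proves both \(\widehat U=U_x\) and the optimality of \(\tau^*\).

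\textbf{Main obstacle.} The delicate step is the rigorous Itô formula for a function that is merely \(C^1\) with absolutely continuous derivative, on a state space closed at \(-x\) where the process is absorbed. I would handle it by mollifying \(\widehat U\) on \([-x,R']\), extended smoothly past \(-x\) using the extension of \(\widehat U'\). Then pass to the limit with dominated convergence, using \(\widehat U''_n\to\widehat U''\) in \(L^1_{\rm loc}\) together with the occupation-density bound \(\ex\int_0^{\tau}g(Y_y(s))\,ds\le C\|g\|_{L^1}\). This bound holds for exit times from bounded intervals, since the Green function of the diffusion on \([-x,R']\) is bounded. A secondary point is that \(\tau\) stops at \(\tau_{-x}\) anyway, so boundary behavior at \(-x\) plays no role.
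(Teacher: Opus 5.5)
Your proposal is correct and follows essentially the same route as the paper. First, restrict via Corollary~\ref{cor_bounded_st_times} to stopping times dominated by an exit time from a compact interval. Then apply the generalized It\^o formula, where the stochastic integral has zero mean because $\widehat U'$ is bounded there and $\ex[\tau]<\infty$, and use \ref{var_ineq_1}--\ref{var_ineq_2} to get $\widehat U\le U_x$. Finally, attain the bound with $\tau^*$, using $\mathcal L\widehat U+c=0$ on $\widehat{\mathcal C}$ and $\widehat U=k_x$ at the exit point.

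The only addition is your sketch justifying the generalized It\^o formula (extension past $-x$, mollification, and the bounded Green function on $[-x,R']$). The paper does not prove this step; it cites it from Karatzas--Shreve.
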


\begin{proof}
    We first show that \(\widehat U\) is a lower bound on the achievable cost.
    By Corollary \ref{cor_bounded_st_times}, it is enough to consider stopping times \(\tau\) satisfying \(\tau\le\tau_{-x, \, y_x^*}\) a.s. Such stopping times have finite expectation, since the exit time of \(Y_y(\cdot)\) from a bounded interval has finite expectation.
    For any such stopping time $\tau$, applying Itô's formula to \(\widehat U(Y_y(\tau))\) in the generalized form valid for functions with absolutely continuous first derivative (see, e.g., \cite[Problem 7.3 on p. 219]{BMSC}), gives
    \begin{equation}\label{Ito}
        \begin{split}
            \widehat U(Y_y(\tau))-\widehat U(y)
            &=
            \int_0^\tau \widehat U'(Y_y(s))\,dN(s)
            +
            \int_0^\tau
            \left(
                \frac12\widehat U''(Y_y(s))
                +G(Y_y(s))\widehat U'(Y_y(s))
            \right)ds
            \\
            &\ge
            \int_0^\tau \widehat U'(Y_y(s))\,dN(s)-c\tau,
        \end{split}
    \end{equation}
    where the inequality follows from \ref{var_ineq_2}. Since
    \(\tau\le\tau_{-x,y_x^*}\), the process remains, up to time \(\tau\), in the compact interval
    \([-x,y_x^*]\), where \(\widehat U'\) is bounded. Therefore, the local martingale
    \begin{equation}
        M(t) \coloneqq \int\limits_{0}^{\tau \wedge t} \widehat{U}'(Y_y(s)) \, dN(s), \quad 0 \le t < \infty,
    \end{equation}
    has quadratic variation 
    \begin{equation}\label{stoch_integral_quadratic_variation}
        \langle M \rangle (t)=
            \int\limits_{0}^{\tau \wedge t} \Big(\widehat{U}'(Y_y(s))\, \Big)^2 \, ds,
    \end{equation}
    which satisfies $\mathbb{E}[\langle M \rangle(\infty)] < \infty$ thanks to $\ex[\tau] < \infty$. Thus, there exists a real constant $C > 0$ with
    \begin{equation}
        \ex [M^2(t)] \le \ex [\langle M \rangle(t)] \le \mathbb{E}[\langle M \rangle(\infty)] \le C \cdot \ex [\tau] < \infty, \quad \forall \, t \ge 0,
    \end{equation}
    where the first inequality is well-known for local martingales (see, e.g., p. 38 in \cite{BMSC}), and the second follows from the monotonicity of the quadratic variation. Hence, $M(\cdot)$ is a bounded in $\mathbb{L}^2$ martingale.
    
    Taking expectations on both sides of \eqref{Ito} and applying optional sampling for martingales bounded in $\mathbb{L}^2$,
    gives
    \(
        \widehat U(y)
        \le
        \ex\big[\widehat U(Y_y(\tau))+c\tau\big]
        \le
        \ex\big[k_x(Y_y(\tau))+c\tau\big];
    \)
    and, taking the infimum over all admissible stopping times, we get
    \begin{equation}\label{value_function_as_lower_bound}
        \widehat U(y)\le U_x(y),\qquad y\in I_x.
    \end{equation}

    It remains to prove the reverse inequality. If \(y\notin\widehat{\mathcal C}\), then \(\widehat U(y)=k_x(y)\) and \(\tau^*=0\), so there is nothing to prove. Suppose therefore that \(y\in\widehat{\mathcal C}\). 
    Since \(\widehat{\mathcal C}\subset(-x,R)\), the time \(\tau^*\) is bounded above by the first exit time of \(Y_y(\cdot)\) from the finite interval \((-x,R)\), and hence has finite expectation.
    On \(\widehat{\mathcal C}\), the strict inequality \(\widehat U<k_x\) and the complementarity condition \ref{var_ineq_3} imply
    \(
        \mathcal L\widehat U+c=0.
    \)
    Therefore, applying Itô's formula up to \(\tau^*\) and recalling that the stochastic integral has expectation zero, we obtain from \eqref{Ito} the equality
    \(
        \widehat U(y)
        =
        \ex\big[\widehat U(Y_y(\tau^*))+c\tau^*\big].
    \)
    By continuity, \(Y_y(\tau^*)\) belongs to the contact set, therefore
    \(\widehat U(Y_y(\tau^*))=k_x(Y_y(\tau^*))\) and
    \[
        \widehat U(y)
        =
        \ex\big[k_x(Y_y(\tau^*))+c\tau^*\big]
        \ge
        U_x(y).
    \]
    Together with \eqref{value_function_as_lower_bound}, this proves \(\widehat U=U_x\), and the same identity shows that \(\tau^*\) is optimal.
\end{proof}

\subsection{The crucial function \texorpdfstring{\(\Phi_x\)}{Phi\_x}, and the one-interval geometry}

We turn to the variational inequalities \ref{var_ineq_1}--\ref{var_ineq_3}. On any open interval where continuation is expected, the complementarity condition \ref{var_ineq_3} forces the candidate value function to satisfy
\begin{equation}\label{continuation_ode_fixed_x}
    \frac12 u''(y)+G(y)u'(y)+c=0.
\end{equation}
Using the identities for \(G\) in \eqref{G_properties}, one checks directly
that the general solution of \eqref{continuation_ode_fixed_x} is
\begin{equation}\label{U_general_form}
    u(y)=A\,G(y)+B-cyG(y)
\end{equation}
for appropriate real constants \(A,B\). This is because the functions $G, 1$ are linearly independent solutions of the homogeneous version of the equation in \eqref{continuation_ode_fixed_x}, and the function $-cyG(y)$ is a particular solution of \eqref{continuation_ode_fixed_x}; recall the identities \eqref{G_properties}.
Thus, the fixed-\(x\) problem reduces to the question of how to glue functions of the form \eqref{U_general_form} to the obstacle \(k_x(\cdot)\).

To determine the constants $A$ and $B$ of \eqref{U_general_form} from such gluing, we use value-matching and the smooth-fit principle. For instance, suppose that \(u(\cdot)\) has the form \eqref{U_general_form} and is ``smoothly pasted'' to \(k_x(\cdot)\) at an interior point \(y>-x\), meaning \(u'(y)=k_x'(y)\); then we have
\(
    A
    =
    \big(k_x'(y)+cG(y)\big)/G'(y)+cy.
\)
This motivates the definitions
\begin{equation}\label{def_phi}
    \Phi_x(y)
    \coloneqq
    \frac{k_x'(y)+cG(y)}{G'(y)}+cy,
    \ y>-x,
    \qquad 
    \Phi_x(-x)
    \coloneqq
    \frac{k'(0+)+cG(-x)}{G'(-x)}-cx .
\end{equation}
In particular, if a continuation interval has two interior free endpoints \(a<b\), then smooth fit at both endpoints forces \(\Phi_x(a)=\Phi_x(b)\), since both need to be equal to $A$. If the lower endpoint is the boundary point \(-x\), then no lower smooth-fit condition is imposed there.

The derivative of the function \(\Phi_x\) in \eqref{def_phi} has a particularly useful form. Direct calculation, using \(G''+2GG'=0\) and \(G^2+G'=1\) from \eqref{G_properties} and the notation of \eqref{def_generator_fixed_x}, gives
\begin{equation}\label{phi_derivative}
    \Phi_x'(y)
    =
    \frac{2}{G'(y)}
    \left(
        \frac12 k_x''(y)+G(y)k_x'(y)+c
    \right)
    =
    \frac{2}{G'(y)}
    \big(
        \mathcal L k_x (y)+c
    \big),
    \qquad y>-x.
\end{equation}
Since \(G'>0\), the sign of \(\Phi_x'\) is the same as the sign of \(\mathcal L k_x+c\). Thus, the function \(\Phi_x\) records precisely where immediate stopping satisfies, or fails to satisfy, the second variational inequality.

We first dispose of the case where \(\Phi_x\) has no interval of decrease.

\begin{Prop}\label{prop_Phi_prime}
    If \(\,\Phi_x\) is non-decreasing on \((-x,\infty)\), then \(U_x=k_x\) on \(I_x\); equivalently, \(\mathcal C_x=\varnothing\).
\end{Prop}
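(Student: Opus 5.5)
The plan is to apply the verification result, Proposition \ref{prop_verification}, with the candidate \(\widehat U\coloneqq k_x\) on \(I_x\). Its candidate continuation region \(\widehat{\mathcal C}=\{y\in I_x:\ k_x(y)<k_x(y)\}\) is empty, so it lies trivially in \((-x,R)\) for any \(R\). Conditions \ref{var_ineq_1} and \ref{var_ineq_3} hold trivially, since \(k_x-\widehat U\equiv0\). Everything therefore comes down to the regularity hypotheses and to inequality \ref{var_ineq_2}.

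For \ref{var_ineq_2} we use the derivative formula \eqref{phi_derivative}. Since \(G'>0\), the sign of \(\Phi_x'(y)\) is the sign of \(\mathcal L k_x(y)+c\) at every \(y>-x\). We first need \(\Phi_x\) differentiable on \((-x,\infty)\). This holds because \(k_x=k(x+\cdot)\) is \(C^2\) on \((-x,\infty)\) by \ref{ass_1}, as \(x+y>0\) there. A differentiable non-decreasing function has \(\Phi_x'\ge0\) everywhere. Hence \(\mathcal L k_x+c\ge0\) on \((-x,\infty)\), which is exactly \ref{var_ineq_2} at every point where \(\widehat U''\) exists; the single point \(-x\) does not matter.

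It remains to check the regularity hypotheses of Proposition \ref{prop_verification}. Continuity of \(k_x\) on \(I_x\) and \(C^1\) on \((-x,\infty)\) are clear. The derivative \(k_x'\) extends to \(-x\) by the value \(k'(0+)\), which exists and is finite because \(k\) is convex and even, so that \(k'\) is monotone and bounded below by \(0\) on \((0,\infty)\). The step I expect to need the most care is absolute continuity of \(k_x'\) on \([-x,r]\). Here \(k_x'\) is monotone and continuous on \([-x,r]\), and on \((-x,r]\) it is the integral of \(k_x''\ge0\). Monotone convergence then gives \(k_x'(r)-k'(0+)=\int_{-x}^r k_x''\,dy<\infty\), so \(k_x''\in L^1\) and \(k_x'\) is absolutely continuous.

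Proposition \ref{prop_verification} then yields \(U_x=k_x\) on \(I_x\), that is, \(\mathcal C_x=\varnothing\). The case \(x=0\) needs no separate treatment, since the same argument applies on \([0,\infty)\).
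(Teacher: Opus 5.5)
Your proposal is correct and follows the paper's proof essentially verbatim. You take $\widehat U\equiv k_x$ in Proposition~\ref{prop_verification}, obtain \ref{var_ineq_2} from $\Phi_x'\ge0$ via \eqref{phi_derivative}, and observe that the candidate continuation set is empty. The only difference is that you spell out the regularity check (finiteness of $k'(0+)$, and absolute continuity of $k_x'$ up to $-x$ by monotone convergence), which the paper simply attributes to \ref{ass_1}.
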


\begin{proof}
    By \eqref{phi_derivative}, the non-decrease of \(\Phi_x\) implies
    \(
        \mathcal L k_x(y)+c
        = k_x''(y) / 2+G(y)k_x'(y)+c
        \ge0
    \)
    for all $y>-x$,
    thus \(\widehat U\equiv k_x\) satisfies
    \ref{var_ineq_1}--\ref{var_ineq_3}.
    The regularity required by Proposition \ref{prop_verification} follows from assumption \ref{ass_1}, with
    \(
    k_x'(-x)\coloneqq k'(0+).
    \) 
    Its candidate continuation set is empty, hence bounded. Proposition \ref{prop_verification} gives \(\widehat U=U_x\), and so immediate stopping is then optimal everywhere.
\end{proof}

Conversely, points at which \(\Phi_x\) is strictly decreasing necessarily belong to the continuation region. This observation is not needed for verification itself, but explains why the geometry of the function \(\Phi_x\) in \eqref{def_phi} controls the geometry of the stopping problem.

\begin{Lem}\label{lem_decrease_points_in_continuation}
    If \(y>-x\) and \(\Phi_x'(y)<0\), then \(y\in\mathcal C_x\).
\end{Lem}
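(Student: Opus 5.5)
By \eqref{phi_derivative} and $G'>0$, the hypothesis $\Phi_x'(y)<0$ is equivalent to $\mathcal L k_x(y)+c<0$. The plan is to show that continuing for a short time from $y$ beats stopping there, so $U_x(y)<k_x(y)$.

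First, since $k\in C^2((0,\infty))$, $G$ is smooth, and $y>-x$, the map $z\mapsto \mathcal L k_x(z)+c$ is continuous on $(-x,\infty)$. Choose $\delta>0$ with $[y-\delta,y+\delta]\subset(-x,\infty)$ such that
\[
\mathcal L k_x(z)+c\le -\eta<0 \quad \text{for all } z\in[y-\delta,y+\delta],
\]
for some $\eta>0$. Let $\sigma\coloneqq \inf\{t\ge0: |Y_y(t)-y|\ge\delta\}$ be the exit time of $Y_y(\cdot)$ from this interval.

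Since $|G|<1$, standard estimates give $\ex[\sigma]<\infty$, and $\sigma>0$ a.s. by continuity of paths, so $\ex[\sigma]>0$. Apply Itô's formula to $k_x(Y_y(\cdot))$ up to $\sigma$. The function $k_x$ is $C^2$ on the compact interval, so the stochastic integral $\int_0^{\sigma} k_x'(Y_y(s))\,dN(s)$ has a bounded integrand and, because $\ex[\sigma]<\infty$, zero mean. This yields
\[
\ex\big[k_x(Y_y(\sigma))+c\sigma\big]=k_x(y)+\ex\Big[\int_0^\sigma \big(\mathcal L k_x(Y_y(s))+c\big)\,ds\Big]\le k_x(y)-\eta\,\ex[\sigma]<k_x(y).
\]
Since $\sigma\in\T^Y$, it follows that $U_x(y)\le \ex[k_x(Y_y(\sigma))+c\sigma]<k_x(y)$, i.e.\ $y\in\mathcal C_x$.

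There is no real obstacle in this argument. The only care needed is the localization: the neighbourhood must stay away from $-x$, where $k_x$ may fail to be $C^2$ (for instance when $k'(0+)>0$), and one must justify that $\ex[\sigma]$ is finite and positive. Both are immediate because the diffusion is nondegenerate with bounded drift.
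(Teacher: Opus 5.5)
Your proposal is correct and follows essentially the same argument as the paper: choose a neighbourhood of $y$, away from $-x$, on which $\mathcal L k_x + c \le -\varepsilon$, and apply Dynkin's formula up to the exit time from that neighbourhood, which gives a strict improvement over stopping immediately. The only difference is that the paper caps the exit time at a deterministic $h$, using $\sigma_h = h \wedge \sigma$; this makes the stopping time bounded and removes the need to check $\ex[\sigma]<\infty$, a fact you justify correctly in any case.
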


\begin{proof}
    By \eqref{phi_derivative}, the inequality \(\Phi_x'(y)<0\) is equivalent to
    \(
        (1/2) k_x''(y)+G(y)k_x'(y)+c<0.
    \)
    By continuity, there exist \(\delta>0\) and \(\varepsilon>0\) such that
    \(y-\delta>-x\) and
    \[
        \frac12 k_x''(z)+G(z)k_x'(z)+c\le -\varepsilon,
        \qquad z\in(y-\delta,y+\delta).
    \]
    For \(h>0\), we set
    \(
        \sigma_h
        \coloneqq
        h\wedge \inf\{t\ge0:Y_y(t)\notin(y-\delta,y+\delta)\},
    \)
    and observe by Dynkin's formula
    \[
        \ex\big[k_x(Y_y(\sigma_h))+c\sigma_h\big]-k_x(y)
        =
        \ex\int_0^{\sigma_h}
        \left(
            \frac12 k_x''(Y_y(t))
            +G(Y_y(t))k_x'(Y_y(t))
            +c
        \right)dt
        \le
        -\varepsilon\,\ex[\sigma_h]<0.
    \]
    Hence, waiting until the exit time \(\sigma_h\) from the interval $(y-\delta, y+\delta)$ performs strictly better than stopping
    immediately, and so \(U_x(y)<k_x(y)\).
\end{proof}

The preceding proposition and lemma show that the only non-trivial case is when \(\Phi_x\) of \eqref{def_phi} decreases somewhere. Since \(\Phi_x'\) is continuous on \((-x,\infty)\), the set
\(
    N_x\coloneqq \{y\in(-x,\infty):\Phi_x'(y)<0\}
\)
is open. In general, this open set need not be connected: it may be a union of several disjoint intervals, and the corresponding continuation region may have a more complicated multi-interval structure. Lemma \ref{lem_decrease_points_in_continuation} only shows that \(N_x\subseteq \mathcal C_x\); it does not rule out additional continuation components, nor does it identify the stopping region.

\noindent \textit{Single Valley:} The remainder of this section focuses on the one-interval regime. We assume
\begin{equation}\label{one_interval_phi_geometry}
    N_x
    \coloneqq
    \{y\in(-x,\infty):\Phi_x'(y)<0\}
    =
    (\ell_x,r_x)
\end{equation}
for some \(-x\le \ell_x<r_x<\infty\), and we allow \(\ell_x=-x\), in which case the interval is understood as \((-x,r_x)\). The finiteness of \(r_x\) follows from Proposition \ref{prop_stopping_for_large_y} and Lemma \ref{lem_decrease_points_in_continuation}.

The assumption \eqref{one_interval_phi_geometry} is a substantive structural restriction. It is, however, the regime in which the free-boundary problem has a tractable canonical solution; moreover, the sufficient condition in subsection \ref{subsec_single_valley} guarantees this geometry for the main examples considered below, such as terminal costs of power and exponential types.
Under the assumption \eqref{one_interval_phi_geometry}, the graph of the function \(\Phi_x\) in \eqref{def_phi} has a \textit{single valley}: it is non-decreasing on \((-x,\ell_x]\), strictly decreasing on \((\ell_x,r_x)\), and non-decreasing again on \([r_x,\infty)\); see Figure \ref{fig:phi_one_valley} below. We show now that this single-valley geometry leads to a single continuation interval \((a_x,b_x)\), and identify both endpoints.

\subsection{Construction of the fixed-\texorpdfstring{\(x\)}{x} solution}\label{subsec_main_construction}

We construct now the value function under the single-valley geometry \eqref{one_interval_phi_geometry}, guided by the smooth-fit discussion above. If \((a,b)\) is a continuation interval whose lower endpoint is interior to \([-x, \infty)\), then smooth fit at \(a\) and \(b\) requires the coefficient \(A\) in \eqref{U_general_form} to satisfy \(A=\Phi_x(a)=\Phi_x(b)\). If the lower endpoint is the state-space boundary \(-x\), then no smooth fit is imposed there, but smooth fit at the upper endpoint still gives \(A=\Phi_x(b)\). Thus, it is natural to parametrize candidate intervals by the ``smooth-fit'' level \(q\), as explained below.

Recalling the notation of \eqref{def_phi}, \eqref{one_interval_phi_geometry}, we set now
\begin{equation}\label{def_q_bounds_fixed_x}
    \underline q_x\coloneqq \Phi_x(r_x)
    <
    \Phi_x(\ell_x) \eqqcolon \overline q_x.
\end{equation}
For each \(q\in(\underline q_x,\overline q_x)\), we define two points \(a_x(q)\) and \(b_x(q)\) as follows. 

The upper endpoint is the first point on the right of \(r_x\) at which \(\Phi_x\) reaches the level \(q\):
\begin{equation}\label{def_bq_fixed_x}
    b_x(q)
    \coloneqq
    \inf\{y\ge r_x:\Phi_x(y)\ge q\}.
\end{equation}
Note that $b_x(q)$ is well-defined, since \(\Phi_x(y)\to+\infty\) as \(y\to\infty\). This follows from the definition of $\Phi_x$ in \eqref{def_phi} and the facts that \(G'(y)\to0\), \(G(y)\to1\) as \(y\to\infty\), and \(k_x'(y)\ge0\) for all sufficiently large \(y\). Hence, every level \(q\in(\underline q_x,\overline q_x)\) is reached on the right branch of
\(\Phi_x\).

The lower endpoint is defined by
\begin{equation}\label{def_aq_fixed_x}
    a_x(q)
    \coloneqq
    \begin{cases}
        \sup\{y\in[-x,\ell_x]:\Phi_x(y)\le q\},
        & q\ge\Phi_x(-x),\\[0.4em]
        -x,
        & q< \Phi_x(-x).
    \end{cases}
\end{equation}
Thus, \(a_x(q)\le \ell_x<r_x\le b_x(q)\), and \(\Phi_x(b_x(q))=q\). If \(a_x(q)>-x\), then also \(\Phi_x(a_x(q))=q\), so the same smooth-fit level \(q\) is imposed at both endpoints. If \(a_x(q)=-x\), then the lower endpoint is the boundary of the state space and no lower smooth-fit condition is imposed.

For a fixed level \(q\), suppose that the continuation value on
\((a_x(q),b_x(q))\) is given by
\(
    u_q(y)=qG(y)+R-cyG(y),
\)
as in \eqref{U_general_form}.
The constant \(R\) is determined by value-matching at the lower endpoint:
\[
    R
    =
    k_x(y)-qG(y)+c\,yG(y) \Big\vert_{y = a_x(q)}.
\]
With this choice, \(u_q(a_x(q))=k_x(a_x(q))\). 

To impose value-matching at the upper endpoint, we define the gap
\[
    F_q(y)\coloneqq k_x(y)-u_q(y),
\]
and note \(F_q(a_x(q))=0\), as well as
\(
    F_q'(y)
    =
    \big(\Phi_x(y)-q\big)G'(y),
\)
using the definition of \(\Phi_x\) in \eqref{def_phi}.
Therefore,
\[
    F_q(b_x(q))
    =
    \int_{a_x(q)}^{b_x(q)}\big(\Phi_x(z)-q\big)\,dG(z),
\]
so value matching at the upper endpoint is equivalent to the vanishing of
the balancing functional
\begin{equation}\label{def_Jx}
    J_x(q)
    \coloneqq
    \int_{a_x(q)}^{b_x(q)}\big(\Phi_x(z)-q\big)\,dG(z),
    \qquad
    q\in(\underline q_x,\overline q_x).
\end{equation}

The construction is illustrated in Figure~\ref{fig:phi_one_valley}. The level \(q\) determines the two candidate endpoints \(a_x(q)\) and \(b_x(q)\), and the balancing equation \(J_x(q)=0\) selects the unique level \(q_x\) for which the positive and negative signed \(dG\)-areas cancel.

\begin{figure}[ht]
    \centering
    \includegraphics[width=.72\linewidth]{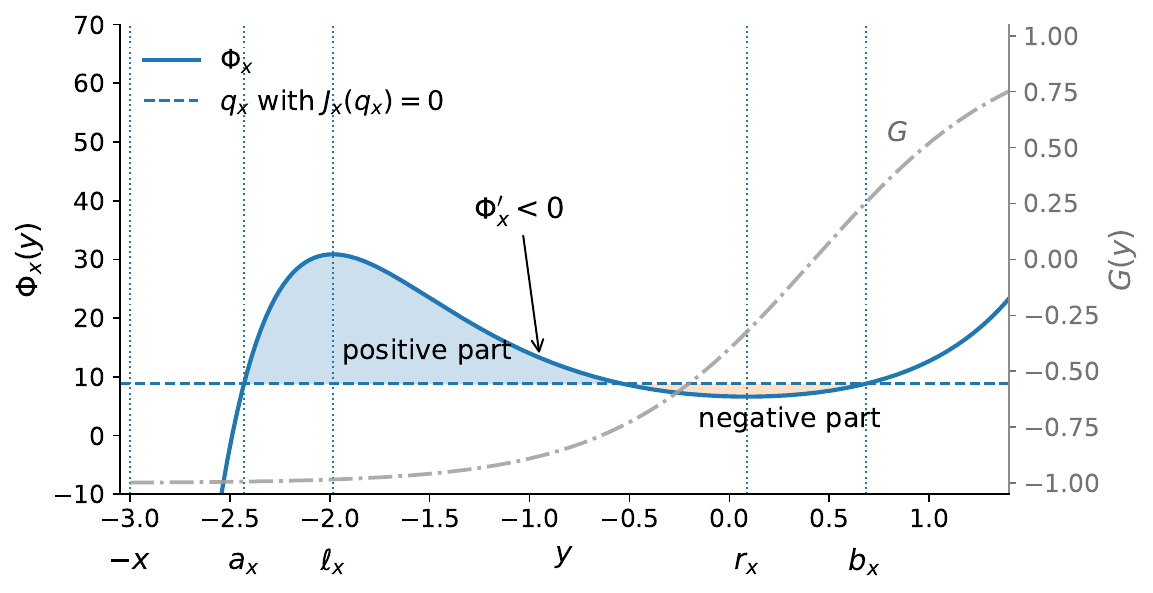}
    \caption{The one-valley geometry of \(\Phi_x\) and the balancing level
    \(q_x\) for $k(s) = s^2, \, c = 1, \, p = 0.3, \, x = 3$. The shaded regions represent the positive and negative parts of the signed \(dG\)-area in \(J_x(q_x)\).}
    \label{fig:phi_one_valley}
\end{figure}

\begin{Lem}\label{lem_uniqueness}
    Under the one-interval geometry \eqref{one_interval_phi_geometry}, and recalling \eqref{def_Jx}, there
    exists a unique \(q_x\in(\underline q_x,\overline q_x)\) such that
    \begin{equation}\label{eq_main_balancing_condition}
        J_x(q_x)=0.
    \end{equation}
\end{Lem}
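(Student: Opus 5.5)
Our approach is to show that \(J_x\) is continuous and strictly decreasing on \((\underline q_x,\overline q_x)\), with \(J_x>0\) near \(\underline q_x\) and \(J_x<0\) near \(\overline q_x\); the intermediate value theorem then gives existence, and strict monotonicity gives uniqueness.

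\emph{Monotonicity.} We first record properties of \(a_x(q)\) and \(b_x(q)\). Under the single-valley geometry, \(b_x(\cdot)\) is non-decreasing with \(\Phi_x(b_x(q))=q\), and \(a_x(\cdot)\) is non-increasing. Moreover, \(\Phi_x\le q\) on \([a_x(q),\ell_x]\) when \(a_x(q)>-x\), because \(\Phi_x\) is non-decreasing on \((-x,\ell_x]\). Also \(\Phi_x<q\) on \((\ell_x,b_x(q))\), and \(\Phi_x\ge q\) on \([-x,a_x(q))\) in the first case of \eqref{def_aq_fixed_x}.

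For \(q_1<q_2\), we write
\[
J_x(q_2)-J_x(q_1)=-(q_2-q_1)\big(G(b_x(q_1))-G(a_x(q_1))\big)+\Big(\int_{a_x(q_2)}^{a_x(q_1)}+\int_{b_x(q_1)}^{b_x(q_2)}\Big)(\Phi_x-q_2)\,dG .
\]
On the added pieces \([a_x(q_2),a_x(q_1)]\) and \([b_x(q_1),b_x(q_2)]\), the definitions give \(\Phi_x\le q_2\). Hence both added integrals are \(\le0\). The first term is strictly negative, since \(a_x(q_1)<b_x(q_1)\) and \(G\) is strictly increasing. So \(J_x\) is strictly decreasing.

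\emph{Continuity.} The map \(b_x(\cdot)\) may jump on plateaus of \(\Phi_x\) on \([r_x,\infty)\), and similarly for \(a_x\). Across such a jump, however, the integrand \(\Phi_x-q\) vanishes on the plateau. Writing \(J_x(q)=\int_{-x}^{\infty}(\Phi_x-q)\mathbf 1_{[a_x(q),b_x(q)]}\,dG\), we show one-sided continuity by dominated convergence. The symmetric difference of the integration intervals shrinks to a set on which \(\Phi_x=q\), or to a single point.

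\emph{Signs at the endpoints.} As \(q\downarrow\underline q_x=\Phi_x(r_x)\), we have \(b_x(q)\to r_x\), using right-branch monotonicity and plateau handling. Also \(a_x(q)\to a_x(\underline q_x)\) with \(a_x(\underline q_x)<\ell_x\), or \(a_x(\underline q_x)=-x\). Then
\[
J_x(q)\to\int_{a}^{r_x}(\Phi_x-\underline q_x)\,dG,
\]
which is strictly positive: the integrand is \(\ge0\) on \([a,r_x]\), and \(>0\) on \((\ell_x,r_x)\), where \(\Phi_x\) strictly decreases toward \(\underline q_x\). The subtle point is \(a=\ell_x\) when \(\ell_x=-x\); this is still fine because the interval \((\ell_x,r_x)\) is nondegenerate.

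As \(q\uparrow\overline q_x=\Phi_x(\ell_x)\), we have \(a_x(q)\to\) some point \(\le\ell_x\) where \(\Phi_x\ge\)-ish, and \(b_x(q)\to b^*\) with \(\Phi_x(b^*)=\overline q_x\). The limit is
\[
\int(\Phi_x-\overline q_x)\,dG,
\]
whose integrand is \(\le0\) throughout and \(<0\) on \((\ell_x,r_x]\). Hence the limit is strictly negative.

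\emph{Main obstacle.} We expect the main difficulty to be the boundary case \(a_x(q)=-x\), i.e.\ \(q<\Phi_x(-x)\). There \(\Phi_x-q>0\) on \([-x,a_x]\) no longer holds as an integrand sign in the \(\overline q_x\) limit. But if \(\Phi_x(-x)>q\) for \(q\) near \(\overline q_x\), then \(\Phi_x(-x)>\overline q_x=\Phi_x(\ell_x)\) would contradict monotonicity on \((-x,\ell_x]\), unless \(\ell_x=-x\). We will handle that case separately, noting that \(\Phi_x\) is continuous at \(-x\) by \eqref{def_phi}, since \(k'_x(y)\to k'(0+)\).
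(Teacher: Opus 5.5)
Your overall strategy (continuity, strict decrease, opposite signs near the two ends, then the intermediate value theorem) is the paper's. However, the strict-decrease step, which is what gives uniqueness, rests on a reversed picture of the left branch of \(\Phi_x\) and fails as written.

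\textbf{The reversed left branch.} Since \(\Phi_x\) is non-decreasing on \((-x,\ell_x]\) and \(\Phi_x(\ell_x)=\overline q_x>q\), the set \(\{y\in[-x,\ell_x]:\Phi_x(y)\le q\}\) is an initial segment that grows with \(q\). Hence:
\begin{itemize}
\item \(a_x(\cdot)\) is non-decreasing, not non-increasing.
\item \(\Phi_x\le q\) on \([-x,a_x(q)]\) and \(\Phi_x>q\) on \((a_x(q),\ell_x]\). This is the opposite of what you state.
\item Since \(\Phi_x>q\) just to the right of \(\ell_x\), one has \(\Phi_x<q\) only on \((\eta,b_x(q))\), where \(\eta\in(\ell_x,r_x)\) is the middle crossing of level \(q\). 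It is not true on all of \((\ell_x,b_x(q))\).
\end{itemize}
Write \(a_i=a_x(q_i)\). Then \(a_1\le a_2\), and your left ``added piece'' is
\[
\int_{a_2}^{a_1}(\Phi_x-q_2)\,dG=\int_{a_1}^{a_2}(q_2-\Phi_x)\,dG\ \ge\ 0,
\]
which is strictly positive whenever \(a_1<a_2\). So the claim that both added integrals are \(\le 0\) is false, and your decomposition does not yield \(J_x(q_2)<J_x(q_1)\).

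\textbf{The missing bound.} What you need is the two-sided estimate \(q_1\le\Phi_x\le q_2\) on \([a_1,a_2]\). It gives \(0\le\int_{a_1}^{a_2}(q_2-\Phi_x)\,dG\le(q_2-q_1)\big(G(a_2)-G(a_1)\big)\). Your first term absorbs this, leaving
\[
J_x(q_2)-J_x(q_1)\le-(q_2-q_1)\big(G(b_1)-G(a_2)\big)+\int_{b_1}^{b_2}(\Phi_x-q_2)\,dG<0 .
\]
The strict inequality holds because \(a_2\le\ell_x<r_x\le b_1\) and \(\Phi_x\le q_2\) on \([b_1,b_2]\). This is exactly the paper's decomposition, which pairs the left piece with the level \(q_1\) rather than \(q_2\):
\[
-\int_{a_1}^{a_2}(\Phi_x-q_1)\,dG-(q_2-q_1)\big(G(b_1)-G(a_2)\big)+\int_{b_1}^{b_2}(\Phi_x-q_2)\,dG .
\]

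Your continuity and endpoint-limit arguments are essentially the paper's and are fine. The ``main obstacle'' paragraph inherits the same sign reversal, but it is not actually an obstacle. When \(a_x(q)=-x\), the integrand is \(\ge 0\) on the left branch. If this persists as \(q\uparrow\overline q_x\), then \(\Phi_x\equiv\overline q_x\) on \([-x,\ell_x]\), so the integrand vanishes there in the limit.
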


\begin{proof}
    The endpoint maps \(a_x(\cdot)\) and \(b_x(\cdot)\) are non-decreasing, and
    \(J_x\) is continuous on \((\underline q_x,\overline q_x)\). This follows
    directly from the one-valley geometry; possible flat portions of
    \(\Phi_x\) do not affect the value of the integral, since the integrand
    \(\Phi_x-q\) vanishes on them.

    As \(q\downarrow\underline q_x\), the negative part of the signed area in \(J_x(q)\) shrinks to zero, while a strictly positive part remains across the valley. Hence
    \[
        \lim_{q\downarrow\underline q_x} J_x(q)>0.
    \]
    Similarly, as \(q\uparrow\overline q_x\), the positive part shrinks to zero, while a strictly negative part remains across the valley. Hence
    \[
        \lim_{q\uparrow\overline q_x} J_x(q)<0.
    \]
    By continuity, \(J_x\) has at least one zero.

    It remains to prove uniqueness, which will follow from the strict monotonicity of $J_x$ in \eqref{def_Jx}. Let \(\underline q_x<q_1<q_2<\overline q_x\), and write
    \[
        a_i\coloneqq a_x(q_i),
        \qquad
        b_i\coloneqq b_x(q_i),
        \qquad i=1,2.
    \]
    By construction, we have
    \(
        a_1\le a_2\le \ell_x<r_x\le b_1\le b_2.
    \)
    Decomposing the difference \(J_x(q_2)-J_x(q_1)\), we write
    \[
        \begin{split}
        J_x(q_2)-J_x(q_1)
        =
        &-\int_{a_1}^{a_2}
            \big(\Phi_x(z)-q_1\big)\,dG(z)
        \\&
        -(q_2-q_1)\big(G(b_1)-G(a_2)\big) 
        +\int_{b_1}^{b_2}
            \big(\Phi_x(z)-q_2\big)\,dG(z).
        \end{split}
    \]
    On \([a_1,a_2]\), we have \(\Phi_x\ge q_1\), so the first term is non-positive. On \([b_1,b_2]\), we have \(\Phi_x\le q_2\), so the third term is non-positive. The middle term is negative, because \(q_2>q_1\), \(G\) is strictly increasing, and \(a_2<b_1\). Therefore,
    \(
        J_x(q_2)<J_x(q_1).
    \)
    Thus, \(J_x\) is strictly decreasing on
    \((\underline q_x,\overline q_x)\), and its zero is unique.
\end{proof}

With \(q_x\) the unique solution of \eqref{eq_main_balancing_condition}, we
set
\begin{equation}\label{def_ax_bx_fixed_x}
    a_x\coloneqq a_x(q_x),
    \qquad
    b_x\coloneqq b_x(q_x),
\end{equation}
and show that these two points give the desired gluing of the continuation
solution to the obstacle. Define
\begin{equation}\label{def_Rx_fixed_x}
    R_x
    \coloneqq
    k_x(a_x)-q_xG(a_x)+c\,a_xG(a_x),
\end{equation}
and
\begin{equation}\label{def_wx_fixed_x}
    w_x(y)
    \coloneqq
    q_xG(y)+R_x-cyG(y),
    \qquad a_x<y<b_x.
\end{equation}

\begin{Lem}\label{lem_gluing_fixed_x}
    The function \(w_x\) satisfies
    \[
        w_x(a_x)=k_x(a_x),
        \qquad
        w_x(b_x)=k_x(b_x),
        \qquad 
        w_x'(b_x)=k_x'(b_x);
    \]
    and if \(a_x>-x\), then also
    \(
        w_x'(a_x)=k_x'(a_x).
    \)
    Finally,
    \begin{equation}\label{obstacle_ineq}
        w_x(y) < k_x(y),\qquad a_x < y < b_x.
    \end{equation}
\end{Lem}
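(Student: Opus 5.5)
The plan is to work entirely with the gap function \(F(y)\coloneqq k_x(y)-w_x(y)\) on \([a_x,b_x]\), which is exactly \(F_{q_x}\) from the construction preceding \eqref{def_Jx}.

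\emph{Boundary conditions.} By the choice \eqref{def_Rx_fixed_x} of \(R_x\), we have \(F(a_x)=0\). We also have \(F'(y)=(\Phi_x(y)-q_x)G'(y)\), which comes directly from \eqref{def_phi} and \eqref{U_general_form}, since \(w_x'=q_xG'-cG-cyG'\). Integrating this derivative gives \(F(b_x)=J_x(q_x)=0\) by Lemma~\ref{lem_uniqueness}, which is the value matching at \(b_x\). Smooth fit at \(b_x\) follows from \(\Phi_x(b_x)=q_x\). To see this, note that \(\Phi_x\) is continuous on \((-x,\infty)\), \(\Phi_x(r_x)=\underline q_x<q_x\), and \(b_x\) is the first level-crossing to the right of \(r_x\); hence \(F'(b_x)=0\). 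When \(a_x>-x\), the same reasoning applies on the left branch. There \(q_x\ge\Phi_x(-x)\) by \eqref{def_aq_fixed_x}, and \(\Phi_x(\ell_x)=\overline q_x>q_x\). Continuity then gives \(\Phi_x(a_x)=q_x\), so \(F'(a_x)=0\).

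\emph{Strict inequality \eqref{obstacle_ineq}.} The key step is a sign analysis of \(F'\) on \((a_x,b_x)\) using the single-valley shape.

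On \((a_x,\ell_x]\), the definition of \(a_x\) as a supremum gives \(\Phi_x>q_x\). This holds in both cases \(a_x>-x\) and \(a_x=-x\); in the latter case \(q_x<\Phi_x(-x)\le\Phi_x(y)\) by monotonicity on the left branch. So \(F'>0\) there. Since \(\Phi_x\) strictly decreases on \((\ell_x,r_x)\) from \(\overline q_x\) to \(\underline q_x\), there is a unique \(m\in(\ell_x,r_x)\) with \(\Phi_x(m)=q_x\). Thus \(F'>0\) on \((a_x,m)\) and \(F'<0\) on \((m,r_x)\). On \([r_x,b_x)\), the definition of \(b_x\) gives \(\Phi_x<q_x\), so \(F'<0\) there as well.

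It follows that \(F\) strictly increases on \([a_x,m]\) starting from \(F(a_x)=0\), so \(F>0\) on \((a_x,m]\). It then strictly decreases on \([m,b_x]\) down to \(F(b_x)=0\), so \(F>0\) on \([m,b_x)\). This gives \(w_x<k_x\) on \((a_x,b_x)\).

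The main obstacle is handling the boundary case \(a_x=-x\) and the flat parts of \(\Phi_x\) carefully, so that the strict signs of \(F'\) are justified on each piece. The point to check is that flat parts of \(\Phi_x\) can occur only outside \((\ell_x,r_x)\). On those outer branches, the sup/inf definitions of the endpoints already force \(\Phi_x\neq q_x\) strictly inside \((a_x,b_x)\). The inner branch contributes only the single crossing point \(m\), which does not affect the strict monotonicity of \(F\).
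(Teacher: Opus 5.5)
Your proof is correct and follows the paper's argument essentially step for step: the gap \(F=k_x-w_x\) with \(F'=(\Phi_x-q_x)G'\), value matching at \(b_x\) from \(J_x(q_x)=0\), smooth fit from \(\Phi_x(b_x)=q_x\) (and \(\Phi_x(a_x)=q_x\) when \(a_x>-x\)), and positivity from the single crossing point in \((\ell_x,r_x)\). The paper phrases the last step through the integral representation of \(F\) against \(dG\) rather than through monotonicity of \(F\), but that is the same argument. One harmless imprecision: \(a_x=-x\) does not force \(q_x<\Phi_x(-x)\), since \(q_x=\Phi_x(-x)\) is possible; in that case the supremum definition already gives \(\Phi_x>q_x\) on \((a_x,\ell_x]\), so your conclusion stands.
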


\begin{proof}
    The value matching at \(a_x\) follows immediately from the definition of
    \(R_x\). The function
    \(
        F_x(y)\coloneqq k_x(y)-w_x(y),\, a_x\le y\le b_x
    \)
    satisfies \(F_x(a_x)=0\), and differentiation gives
    \begin{equation}\label{gap_derivative_fixed_x}
        F_x'(y)
        =
        \big(\Phi_x(y)-q_x\big)G'(y),
        \qquad a_x<y<b_x,
    \end{equation}
    thus also
    \begin{equation}\label{F_integral}
        F_x(y)
        =
        \int_{a_x}^{y}\big(\Phi_x(z)-q_x\big)\,dG(z),
        \qquad a_x\le y\le b_x.
    \end{equation}
    In particular, the balancing condition \(J_x(q_x)=0\) gives \(F_x(b_x)=0\), which is value matching at \(b_x\).
    Since \(\Phi_x(b_x)=q_x\), \eqref{gap_derivative_fixed_x} gives \(F_x'(b_x)=0\), and hence \(w_x'(b_x)=k_x'(b_x)\). If \(a_x>-x\), then \(\Phi_x(a_x)=q_x\), and the same argument gives \(w_x'(a_x)=k_x'(a_x)\).

    It remains to prove the obstacle inequality \eqref{obstacle_ineq}. By the single-valley geometry, there exists a unique point \(\eta_x\in(\ell_x,r_x)\) such that \(\Phi_x(\eta_x)=q_x\). Moreover, the definitions of \(a_x\) and \(b_x\), together with the strict decrease of \(\Phi_x\) on \((\ell_x,r_x)\), give
    \[
        \Phi_x(z)>q_x
        \quad\text{for } \ \ a_x<z<\eta_x,
        \qquad
        \Phi_x(z)<q_x
        \quad\text{for } \ \ \eta_x<z<b_x.
    \]
    Since \(dG(z)=G'(z)\,dz\) is a positive measure, the representation of \(F_x\) in \eqref{F_integral} implies \(F_x(y) > 0\) for \(a_x < y\le\eta_x\). If \(\eta_x\le y < b_x\), then, using \(F_x(b_x)=0\), we obtain
    \[
        F_x(y)
        =
        -\int_y^{b_x}\big(\Phi_x(z)-q_x\big)\,dG(z)
         > 0,
    \]
    whence \(w_x(y) < k_x(y)\) on \((a_x,b_x)\).
\end{proof}

We can state now the fixed-\(x\) solution result.

\begin{Th}\label{th_fixed_x_solution}
    Fix \(x\ge0\). If the function \(\Phi_x\) of \eqref{def_phi} is non-decreasing on \((-x,\infty)\), then  immediate stopping is optimal for every \(y\in I_x\), and
    \(
        U_x(y)=k_x(y),\, y\in I_x.
    \)

    Suppose, on the other hand, that the one-interval regime \eqref{one_interval_phi_geometry} holds. Let \(q_x\) be the unique solution of \eqref{eq_main_balancing_condition}, and \(a_x,b_x,R_x\) be defined by \eqref{def_ax_bx_fixed_x}, \eqref{def_Rx_fixed_x}. Then
    \(
        \mathcal C_x=(a_x,b_x)
    \)
    is the continuation region; the value function is given by
    \begin{equation}\label{fixed_x_value_formula}
        U_x(y)
        =
        \begin{cases}
            k_x(y), & -x\le y\le a_x,\\[0.3em]
            q_xG(y)+R_x-cyG(y), & a_x<y<b_x,\\[0.3em]
            k_x(y), & y\ge b_x;
        \end{cases}
    \end{equation}
    and the stopping time
    \[
        \tau_x^*
        \coloneqq
        \inf\{t\ge0:Y_y(t)\notin(a_x,b_x)\},
    \]
    with the convention \(\tau_x^*=0\) when \(y\notin(a_x,b_x)\), is optimal.
\end{Th}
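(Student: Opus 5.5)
The first assertion is exactly Proposition~\ref{prop_Phi_prime}, so only the one-interval regime needs work. There the plan is to apply the verification result, Proposition~\ref{prop_verification}, to the candidate $\widehat U$ defined by the right-hand side of \eqref{fixed_x_value_formula}, namely $k_x$ off $(a_x,b_x)$ and $w_x$ from \eqref{def_wx_fixed_x} on it. This yields $U_x=\widehat U$ and the optimality of $\tau_x^*$ at once. Since Lemma~\ref{lem_gluing_fixed_x} gives $\widehat U<k_x$ exactly on $(a_x,b_x)$, it also yields $\mathcal C_x=(a_x,b_x)$, and this candidate continuation set lies in $(-x,R)$ for any $R>b_x$, where $b_x<\infty$.

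\emph{Regularity.} The candidate $\widehat U$ is continuous on $I_x$ by value matching at $a_x$ and $b_x$ (Lemma~\ref{lem_gluing_fixed_x}). Smooth fit at $b_x$, and at $a_x$ when $a_x>-x$, makes $\widehat U$ of class $C^1$ on $(-x,\infty)$. If $a_x=-x$, there is no interior junction at the left end, and $\widehat U'$ extends to $-x$ by $w_x'(-x)$. On each piece $\widehat U'$ is $C^1$, since $k\in C^2((0,\infty))$ and $G$ is smooth. Near $-x$, convexity of $k$ gives that $k_x'$ is monotone and bounded on compacts, with $k_x''\ge0$ integrable. Hence $\widehat U'$ is absolutely continuous on every $[-x,r]$.

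\emph{Variational inequalities.} Inequality \ref{var_ineq_1} holds by \eqref{obstacle_ineq} and by equality off $(a_x,b_x)$. Complementarity \ref{var_ineq_3} holds because $\mathcal L w_x+c=0$ on $(a_x,b_x)$ by construction \eqref{U_general_form}. The remaining condition is \ref{var_ineq_2} on the stopping set, i.e.\ $\mathcal Lk_x+c\ge0$ there, which by \eqref{phi_derivative} is equivalent to $\Phi_x'\ge0$. By the single-valley geometry \eqref{one_interval_phi_geometry}, it suffices that the stopping set $[-x,a_x]\cup[b_x,\infty)$ avoids $(\ell_x,r_x)$. This follows from $a_x\le\ell_x$ and $b_x\ge r_x$, which hold by construction of $a_x(q)$ and $b_x(q)$ in \eqref{def_aq_fixed_x} and \eqref{def_bq_fixed_x}. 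At the junction points themselves, $\widehat U''$ need not exist, and the proposition only requires the inequalities where it does.

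The main obstacle is the regularity near the boundary $-x$ when $k$ fails to be differentiable at $0$, as for $k=|\cdot|$. One must check that $\widehat U'$ really is absolutely continuous up to $-x$ in the case $a_x>-x$, where $\widehat U=k_x$ near $-x$, and verify \ref{var_ineq_2} almost everywhere near $-x$. Convexity of $k$ provides the needed absolute continuity of $k'$ on $(0,r]$. The inequality $\Phi_x'\ge0$ on $(-x,\ell_x]$ gives \ref{var_ineq_2} there, and this suffices since the Itô argument in Proposition~\ref{prop_verification} only requires it Lebesgue-a.e.
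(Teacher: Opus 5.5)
Your proposal is correct and follows the paper's route. The first part is Proposition~\ref{prop_Phi_prime}; the second applies Proposition~\ref{prop_verification} to the piecewise candidate, using Lemma~\ref{lem_gluing_fixed_x} for value matching, smooth fit and the strict obstacle inequality, and using $a_x\le\ell_x<r_x\le b_x$ together with \eqref{phi_derivative} to get $\mathcal L k_x+c\ge0$ on the stopping set. Your extra argument, that $k_x'$ is absolutely continuous up to $-x$ because $k$ is convex with finite $k'(0+)$, fills in a detail the paper simply attributes to assumption~\ref{ass_1}.
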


\begin{proof}
    The case where \(\Phi_x\) is non-decreasing is dealt with in Proposition \ref{prop_Phi_prime}. 
    Assume now that \eqref{one_interval_phi_geometry} holds, and define the candidate value function $\widehat U_x$ by the expression on the right-hand side of \eqref{fixed_x_value_formula}.
    By Lemma \ref{lem_gluing_fixed_x}, assumption \ref{ass_1}, and the piecewise definition \eqref{fixed_x_value_formula}, this function is continuous on \(I_x\), and its derivative extends to \(I_x\) and is absolutely continuous on every compact interval \([-x,r]\), \(r>-x\). It also satisfies the obstacle inequality
    \(
        \widehat U_x(y)\le k_x(y),\, y\in I_x.
    \)
    In fact, Lemma \ref{lem_gluing_fixed_x} shows that the inequality is strict exactly on \((a_x,b_x)\), where the function \(\widehat U_x\) has the form \eqref{U_general_form}, and therefore solves
    \(
        \mathcal L\widehat U_x(y)+c=0.
    \)
    Outside \((a_x,b_x)\), we have \(\widehat U_x=k_x\). Since \(a_x\le\ell_x<r_x\le b_x\), the complement of \((a_x,b_x)\) is contained in the region where \(\Phi_x\) is non-decreasing. Hence, by
    \eqref{phi_derivative},
    \[
        \mathcal L k_x(y)+c
        =
        \frac12 k_x''(y)+G(y)k_x'(y)+c
        =
        \frac{G'(y)}{2}\Phi_x'(y)
        \ge0
    \]
    wherever \(k_x''\) exists outside \((a_x,b_x)\). Thus \(\widehat U_x\) satisfies the variational inequalities \ref{var_ineq_1}--\ref{var_ineq_3}. Its candidate continuation set is the bounded interval \((a_x,b_x)\). Proposition \ref{prop_verification} therefore gives \(\widehat U_x=U_x\), and the first exit time from
    \((a_x,b_x)\) is optimal.
\end{proof}

\begin{remark}
    For each fixed \(x\), the stopping problem of this section falls within the classical potential-theoretic framework for optimal stopping of Markov processes developed by Dynkin; see \cite{Dynkin1963,Dynkin1965,DynkinYushkevich1969}. In the one-dimensional diffusion setting, it can also be placed within the scale-function and generalized-concavity framework developed by Dayanik and Karatzas \cite{DayanikKaratzas2003}, after the standard adjustment for the running cost. We use instead the direct variational-inequality and balancing construction above, since it keeps explicit the role of \(\Phi_x\) and the dependence of the continuation interval on \(x\), and is therefore better suited to the ``gluing'' analysis of Section~\ref{sec:structure}.
\end{remark}

\subsection{A sufficient condition for one-interval geometry}\label{subsec_single_valley}

Theorem \ref{th_fixed_x_solution} is formulated in terms of the geometry of the function \(\Phi_x\) in \eqref{def_phi}. We now present a convenient condition, expressed directly in terms of the terminal cost, which guarantees this geometry. This is the reason for the assumption \ref{ass_2} in the model section.

Fix \(x\ge0\), and write
\[
    I_x^\circ\coloneqq (-x,\infty),\qquad
    I_x^-\coloneqq I_x^\circ\cap(-\infty,y_0),
    \qquad
    I_x^+\coloneqq I_x^\circ\cap[y_0,\infty).
\]
The interval \(I_x^-\) may be empty. On \(I_x^\circ\), we define the functions
\begin{equation}\label{def_Qx_Thetax}
    P_x(y)
    \coloneqq
    \frac{k_x''(y)+2c}{2k_x'(y)},
    \qquad
    \Theta_x(y)
    \coloneqq
    P_x(y)+G(y).
\end{equation}
Whenever \(k_x'(y)>0\), the derivative identity \eqref{phi_derivative} can be rewritten as
\begin{equation}\label{phi_derivative_theta}
    \Phi_x'(y)
    =
    \frac{2k_x'(y)}{G'(y)}\,\Theta_x(y),
    \qquad y\in I_x^\circ .
\end{equation}
Thus, \(\Phi_x'\) and \(\Theta_x\) have the same sign.

\begin{Prop}\label{prop_decrease_of_phi}
    Fix \(x\ge0\), and suppose that \(k_x'(y)>0\) for all \(y\in I_x^\circ\). Assume that \(\Theta_x\) is convex on \(I_x^-\), and that $\Theta_x(y) \ge 0$ holds for every $y \in I_x^+$.
    Then the set
    \(
        \{y\in I_x^\circ:\Phi_x'(y)<0\}
    \)
    is either empty or a single interval.
\end{Prop}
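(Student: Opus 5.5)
By \eqref{phi_derivative_theta} and the standing hypothesis \(k_x'>0\) on \(I_x^\circ\), the set \(\{y\in I_x^\circ:\Phi_x'(y)<0\}\) coincides with \(\{y\in I_x^\circ:\Theta_x(y)<0\}\). The plan is therefore to work entirely with the sign of \(\Theta_x\) and to show that \(N\coloneqq\{\Theta_x<0\}\) is empty or an interval, i.e. a convex subset of \(I_x^\circ\).

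\emph{Step 1.} The set \(N\) lies inside \(I_x^-\). This is immediate from the hypothesis \(\Theta_x\ge0\) on \(I_x^+\). In particular, if \(I_x^-\) is empty (the case \(-x\ge y_0\)), then \(N=\varnothing\) and we are done.

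\emph{Step 2.} On \(I_x^-\), which is an interval (namely \((-x,y_0)\)), the function \(\Theta_x\) is convex by assumption. Its strict sublevel set \(\{\Theta_x<0\}\cap I_x^-\) is therefore convex. Indeed, if \(y_1<y_2\) in \(I_x^-\) satisfy \(\Theta_x(y_i)<0\), then for any \(y\in(y_1,y_2)\) convexity gives \(\Theta_x(y)\le \max\{\Theta_x(y_1),\Theta_x(y_2)\}<0\). Hence \(N=N\cap I_x^-\) is empty or a single interval, open because \(\Theta_x\) is continuous on \(I_x^\circ\) (since \(k\in C^2((0,\infty))\) and \(k_x'>0\)).

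\emph{Step 3.} It remains to check that no second component of \(N\) can appear near the junction \(y_0\) between \(I_x^-\) and \(I_x^+\). Step 1 already excludes this, because \(N\cap I_x^+=\varnothing\). So \(N\) is a convex subset of \((-x,y_0)\), possibly of the form \((\ell,y_0)\) with \(\ell\ge -x\), and this is consistent with \eqref{one_interval_phi_geometry}.

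There is no real obstacle here. The only point needing care is that convexity is assumed only on \(I_x^-\) rather than on all of \(I_x^\circ\); the argument above relies on the nonnegativity hypothesis on \(I_x^+\) precisely to confine \(N\) to \(I_x^-\), so that convexity is only ever applied where it holds.
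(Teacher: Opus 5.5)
Your proposal is correct and follows the paper's proof essentially verbatim. You identify $\{\Phi_x'<0\}$ with $\{\Theta_x<0\}$ via \eqref{phi_derivative_theta}, confine this set to $I_x^-$ using $\Theta_x\ge0$ on $I_x^+$, and conclude from the convexity of strict sublevel sets of the convex function $\Theta_x$ on the interval $I_x^-$. Your Step 3 is redundant given Step 1, but harmless.
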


\begin{proof}
    By \eqref{phi_derivative_theta}, we have
    \(
        \{y\in I_x^\circ:\Phi_x'(y)<0\}
        =
        \{y\in I_x^\circ:\Theta_x(y)<0\}.
    \)
    Since \(\Theta_x\ge0\) on \(I_x^+\), the latter set is contained in \(I_x^-\). On \(I_x^-\), the function \(\Theta_x\) is convex by assumption, and therefore its strict sublevel set
    \(
        \{y\in I_x^-:\Theta_x(y)<0\}
    \)
    is convex. Hence, this set is either empty or an interval.
\end{proof}

We now specialize this criterion to the standing assumptions on the terminal cost \(k(\cdot)\). Recall
that
\(
    k_x(y)=k(x+y),
    \, y>-x,
\)
so that
\[
    P_x(y)
    =
    \frac{k''(x+y)+2c}{2k'(x+y)}.
\]

\begin{Cor}\label{cor_phi_sufficient_condition}
    Fix \(x\ge0\), and assume that the cost function \(k(\cdot)\) satisfies the conditions \ref{ass_1}, \ref{ass_2} of Section \ref{sec:model}. Then the set
    \(
        \{y\in(-x,\infty):\Phi_x'(y)<0\}
    \)
    is either empty or a single interval, and the hypotheses of Theorem \ref{th_fixed_x_solution} are satisfied.
\end{Cor}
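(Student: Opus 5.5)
The plan is to verify the two hypotheses of Proposition~\ref{prop_decrease_of_phi} and then to check that the resulting geometry matches \eqref{one_interval_phi_geometry}, or else the non-decreasing alternative of Theorem~\ref{th_fixed_x_solution}.

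First, positivity. By \ref{ass_2}, \(k'(s)>0\) for all \(s>0\). For \(y\in I_x^\circ=(-x,\infty)\) we have \(x+y>0\), hence \(k_x'(y)=k'(x+y)>0\). Moreover \(P_x(y)=P_k(x+y)\) is a translate of \(P_k\), so it is convex on \(I_x^\circ\) by \ref{ass_2}.

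Second, convexity of \(\Theta_x\) on \(I_x^-\). By \eqref{G_properties}, \(G\) is strictly convex on \((-\infty,y_0)\). Hence \(\Theta_x=P_x+G\) is a sum of convex functions on \(I_x^-\), and is therefore convex there.

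Third, nonnegativity on \(I_x^+\). Convexity of \(k\) gives \(k''\ge0\), and \(c>0\), \(k'>0\) give \(P_x>0\). On \(I_x^+=[y_0,\infty)\cap I_x^\circ\) we have \(G\ge G(y_0)=0\) because \(G\) is increasing. Therefore \(\Theta_x>0\) on \(I_x^+\). Proposition~\ref{prop_decrease_of_phi} now yields that \(N_x=\{\Phi_x'<0\}\) is either empty or a single interval, contained in \(I_x^-\subset(-x,y_0)\).

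It remains to match the hypotheses of Theorem~\ref{th_fixed_x_solution}.

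If \(N_x=\varnothing\), then \(\Phi_x'\ge0\) on \((-x,\infty)\). Since \(\Phi_x\) is \(C^1\) there (\(k\in C^2((0,\infty))\)), it is non-decreasing, which is the first alternative.

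Otherwise \(N_x\) is a nonempty open interval \((\ell_x,r_x)\) with \(-x\le\ell_x<r_x\). Finiteness of \(r_x\) is immediate, since \(N_x\subset(-x,y_0)\) gives \(r_x\le y_0<\infty\); it also follows from Proposition~\ref{prop_stopping_for_large_y} and Lemma~\ref{lem_decrease_points_in_continuation}. Openness holds because \(\Phi_x'\) is continuous. So \eqref{one_interval_phi_geometry} holds, which is the second alternative.

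The only delicate point I expect is bookkeeping at the boundary \(-x\) and the edge cases, namely \(I_x^-\) empty (when \(-x\ge y_0\)) or \(\ell_x=-x\). When \(I_x^-\) is empty, \(N_x\) is empty and we fall into the first case. The convention \(\ell_x=-x\) is explicitly allowed in \eqref{one_interval_phi_geometry}, so no continuity of \(\Phi_x'\) at \(-x\) is needed.
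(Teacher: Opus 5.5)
Your proposal is correct and follows essentially the same route as the paper's proof. It translates \ref{ass_2} into convexity and positivity of \(P_x\), adds the convex \(G\) on \(I_x^-\), and uses \(P_x>0\) and \(G\ge0\) on \(I_x^+\) to invoke Proposition~\ref{prop_decrease_of_phi}, then reads off \(r_x\le y_0<\infty\). Your extra bookkeeping (openness of \(N_x\), the \(C^1\) argument in the empty case, and the case \(I_x^-=\varnothing\)) only makes explicit what the paper leaves implicit.
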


\begin{proof}
    Assumption \ref{ass_2} gives \(k'(s)>0\) for all \(s>0\), and therefore \(k_x'(y)>0\) for all \(y>-x\). It also says precisely that \(s\mapsto (k''(s)+2c)/(2k'(s))\) is convex on \((0,\infty)\). Hence, the function $P_x$ in \eqref{def_Qx_Thetax} is convex on \(I_x^\circ\), and in particular on \(I_x^-\).

    On the interval \(I_x^-\), the function \(G\) is strictly convex. Thus, \(\Theta_x=P_x+G\) is convex on \(I_x^-\). On the other hand, on \(I_x^+\) we have \(G\ge0\). Since \(k(\cdot)\) is convex and \(c>0\), we have \(P_x(y) > 0\) for \(y\in I_x^\circ\) in \eqref{def_Qx_Thetax}.
    Therefore, \(\Theta_x\ge0\) on \(I_x^+\), and Proposition \ref{prop_decrease_of_phi} applies.

    If the set of decrease is empty, then \(\Phi_x\) is non-decreasing and the first part of Theorem \ref{th_fixed_x_solution} applies. If it is nonempty, then it is a single interval; moreover it is contained in \(I_x^-\subset(-x,y_0)\), and hence its right endpoint is finite. This is exactly the one-interval geometry \eqref{one_interval_phi_geometry}.
\end{proof}

\section{Back to Two Dimensions: Structural Properties}\label{sec:structure}

\subsection{Notation and Overview}\label{subsec:2d_notation_overview}

In the previous section, we solved the embedded problem \eqref{2d_value_func_def} after fixing the first coordinate \(x\ge0\). We take up now the full two-dimensional formulation. The key point is that the diffusion \(Y_y(\cdot)\) evolves only in the \(y\)-coordinate, while \(x\) enters the problem as a parameter through the terminal cost \(y\mapsto k(x+y)\). Thus, the two-dimensional problem is obtained by gluing together the fixed-\(x\) solutions of Theorem~\ref{th_fixed_x_solution}.

The purpose of this section is to understand the geometry of this gluing; corresponding illustrations are provided in Section~\ref{sec:examples}. We shall see that the continuation region is a strip in the $(x, y)$ plane, whose vertical sections are the intervals found in the previous section. The main questions are then how this strip is generated, how its two boundaries move as \(x\) changes, and whether the lower boundary ever touches the diagonal \(y=-x\), the lower edge of the state space. These structural properties will be used in Section~\ref{sec:original_problem} to recover the solution of the original one-dimensional problem \eqref{value_function_def} from the horizontal section \(y=0\). 

We start by introducing relevant notions and notation.
The value function \(U(\cdot,\cdot)\) is defined in \eqref{2d_value_func_def}.
We denote the natural state space of the two-dimensional problem by
\begin{equation}\label{state_space_D}
    \mathcal D
    \coloneqq
    \{(x,y):x\ge0,\ y\ge -x\},
\end{equation}
and the continuation and stopping regions, respectively, by
\begin{equation}\label{def_C_S_2d}
    \mathcal C
    \coloneqq
    \{(x,y)\in\mathcal D:U(x,y)<k(x+y)\},
    \qquad 
    \mathcal S
    \coloneqq
    \{(x,y)\in\mathcal D:U(x,y)=k(x+y)\}.
\end{equation}
For fixed \(x\ge0\), the vertical section of \(\mathcal C\) is
\(
    \mathcal C_x
    \coloneqq
    \{y\ge -x:(x,y)\in\mathcal C\},
\)
as in \eqref{def_continuation_set}.
Throughout the present section, and unless stated otherwise, we assume that the hypotheses of Theorem~\ref{th_fixed_x_solution} hold for every \(x\ge0\). In particular, this is guaranteed by the sufficient condition in Corollary \ref{cor_phi_sufficient_condition}. Hence, each section \(\mathcal C_x\) is either empty or a single open interval. We define the set of parameters for which the continuation is nontrivial by
\begin{equation}\label{def_I_active}
    \mathcal I
    \coloneqq
    \{x\ge0:\mathcal C_x\neq\varnothing\}.
\end{equation}

We now switch from the fixed-\(x\) notation \(\Phi_x(y)\) of \eqref{def_phi} to the
bivariate notation
\begin{equation}\label{def_phi_2d}
    \Phi(x,y)
    \coloneqq
    \Phi_x(y)
    =
    \frac{k'(x+y)+cG(y)}{G'(y)}+cy,
    \qquad x\ge0,\quad y>-x.
\end{equation}
We shall denote partial derivatives of two-variable objects by $D_x$ and $D_y$, respectively, while primes will continue to denote derivatives of functions of one real variable. Whenever a value at the lower endpoint $y = -x$ is needed, we use the right limit
\begin{equation}\label{def_phi_boundary_2d}
    \Phi(x,-x)
    \coloneqq
    \frac{k'(0+)+cG(-x)}{G'(-x)}-cx.
\end{equation}
By Proposition~\ref{prop_Phi_prime} and Theorem~\ref{th_fixed_x_solution}, the region \eqref{def_I_active} is
\begin{equation}\label{def_I_via_phi}
    \mathcal I
    =
    \left\{
        x\ge0:
        \exists\, y>-x
        \text{ such that }
        D_y \Phi(x,y)<0
    \right\}.
\end{equation}

With the convention \(\inf\emptyset=\infty\), we denote by
\(
    x_{birth}
    \coloneqq
    \inf\mathcal I
\)
the first possible value of \(x\) for which a nonempty continuation section may appear.
For \(x\in\mathcal I\), we write
\(
    \mathcal C_x=(a(x),b(x)),
    \,
    -x\le a(x)<b(x)<\infty,
\)
and the continuation region becomes
\begin{equation}\label{C_sectional_representation}
    \mathcal C
    =
    \{(x,y):x\in\mathcal I,\ a(x)<y<b(x)\}.
\end{equation}
The functions \(a(\cdot)\) and \(b(\cdot)\) will be called \textit{lower} and \textit{upper free boundaries}.

The lower boundary has an additional feature, specific to the present two-dimensional embedding. Since the state space has lower edge \(y=-x\), the lower free boundary may touch this diagonal. We measure its distance from the diagonal by
\begin{equation}\label{def_d_lower_boundary}
    d(x)
    \coloneqq
    x+a(x),
    \qquad x\in\mathcal I,
\end{equation}
and define the first possible contact point
\(
x_{contact}
    \coloneqq
    \inf\{x\in\mathcal I:a(x)=-x\},
\)
again with the convention \(\inf\emptyset=\infty\).
Finally, for \(x\in\mathcal I\), we define the coefficient functions
\begin{equation}\label{def_q_R_coefficients}
    Q(x)
    \coloneqq
    \Phi(x,b(x)),
    \qquad
    R(x)
    \coloneqq
    k(x+a(x))-Q(x)G(a(x))+c\,a(x)G(a(x)).
\end{equation}
Then Theorem~\ref{th_fixed_x_solution} gives, on the continuation interval,
\begin{equation}\label{value_inside_C_coefficients}
    U(x,y)
    =
    Q(x)G(y)+R(x)-cyG(y),
    \qquad a(x)<y<b(x);
\end{equation}
and outside this interval, \(U(x,y)=k(x+y)\). If the lower boundary is interior, (i.e., \(a(x)>-x\)), then
\[
    Q(x)=\Phi(x,a(x))=\Phi(x,b(x)).
\]
If instead \(a(x)=-x\), the lower endpoint is pinned to the boundary of the state space, and the level \(Q(x)\) is determined by smooth fit only at the upper boundary.

The next subsection identifies the values of \(x\) for which nonempty continuation sections can appear. After that, we study the expansion and monotonicity properties of the continuation region, the free-boundary equations and their regularity, the possible contact of the lower boundary with the diagonal, the asymptotic behavior of the upper boundary, and finally the corresponding regularity and monotonicity properties of the value function.

The following result summarizes the structural conclusions of this section. The next subsections prove the individual assertions of this theorem and give the corresponding analytic formulas.

\begin{Th}\label{th_2d_structure}
    In the setting of Theorem~\ref{th_fixed_x_solution}, the two-dimensional continuation region $\mathcal{C}$ has the sectional representation
    \eqref{C_sectional_representation}, where each nonempty section \(\mathcal C_x\) is a bounded interval \((a(x),b(x))\). On this region the value function is given by
    \[
        U(x,y)=Q(x)G(y)+R(x)-cyG(y),
        \qquad a(x)<y<b(x),
    \]
    in the notation of \eqref{def_q_R_coefficients}, \eqref{def_G},
    while \(U(x,y)=k(x+y)\) outside \(\mathcal C\).

    \noindent \textbf{(i)} \quad 
    The continuation region expands diagonally: if \((x,y)\in\mathcal C\) and
    \(h\ge0\), then
    \(
        (x+h,y-h)\in\mathcal C.
    \)
    Consequently, \(\mathcal I\) of \eqref{def_I_active} is a right-interval, \(d(x)=x+a(x)\) is non-increasing on \(\mathcal I\), and diagonal contact is sticky: if
    \(a(\bar x)=-\bar x\) for some \(\bar x\in\mathcal I\), then
    \(
        a(x)=-x,\, x\in\mathcal I,\, x\ge\bar x.
    \)
    
    \noindent If, in addition, \ref{ass_3} holds, then the continuation region expands vertically:
    \[
        (x,y)\in\mathcal C,\ h\ge0
        \quad\Longrightarrow\quad
        (x+h,y)\in\mathcal C.
    \]
    In particular,
    \[
        a(x+h)\le a(x),
        \qquad
        b(x+h)\ge b(x),
        \qquad x\in\mathcal I,\ h\ge0.
    \]

    \noindent \textbf{(ii)} \quad 
    The free boundaries satisfy the integral equations \eqref{interior_boundary_system_2d}, \eqref{active_boundary_system_2d} below. If the relevant side branches of \(y\mapsto\Phi(x,y)\) have no flat portions, then the boundary functions are continuous on intervals where the boundary regime is fixed. At regular points, where the corresponding values of \(H\)  of \eqref{def_H_2d} do not vanish, the boundaries are of class \(C^1\) and satisfy the differential equations of Proposition~\ref{prop_boundary_regular_odes}.

    \noindent \textbf{(iii)} \quad 
    The value function is non-decreasing in both $x$ and $y$ variables. Moreover, it satisfies smooth fit in the displacement variable $y$ at the upper boundary and at every interior lower boundary. At regular boundary points it also satisfies horizontal smooth fit.

    \noindent \textbf{(iv)} \quad 
    Finally, if \(k'(0+)\le c\), then the lower boundary never touches the diagonal:
    \[
        a(x)>-x,\qquad x\in\mathcal I.
    \]
    If \(k'(0+)>c\) and \(k(r)=o(e^{2r})\) as \(r\to\infty\), then the lower
    boundary eventually coincides with the diagonal:
    \[
        a(x)=-x
        \quad\text{for all sufficiently large }x\in\mathcal I.
    \]
    In the non-contact case, under the same subcritical growth condition and assuming that
    \(
    \rho
    \coloneqq
    \sup\{r\ge0:k'_+(r)\le c\}
    \)
    is finite, one has
    \(
        x+a(x)\to\rho
    \)
    as $x\to\infty.$
\end{Th}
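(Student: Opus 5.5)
The sectional representation and the formula for \(U\) on \(\mathcal C\) follow directly from Theorem~\ref{th_fixed_x_solution}, applied for each fixed \(x\ge0\). The plan is to prove the four items separately, using probabilistic comparison arguments for the monotonicity statements and the \(\Phi\)-geometry for the boundary statements.

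\textbf{Item (i).} For the diagonal expansion, I would use the identity \(k(x+y)=k((x+h)+(y-h))\): the obstacle is constant along anti-diagonals. The dynamics, however, differ, since \(Y_{y-h}\) has drift \(G(Y)\) with \(G\) increasing. The cleanest route is through \(\Phi\). Because \(k'(x+y)\) is constant along anti-diagonals and \(G'\) is log-concave near the relevant region, I would check the sign of \(D_y\Phi\) along the family, namely the sign of \(\mathcal L k_x+c = \tfrac12 k''+Gk'+c\). Along the anti-diagonal, \(k''\) and \(k'\) are fixed while \(G\) decreases as \(y\) decreases. Hence \(N_x\) moves into \(N_{x+h}\) shifted by \(-h\), since \(k'\ge0\).

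To pass from the decrease set to the continuation set, I would prefer a direct probabilistic comparison. Take the optimal \(\tau\) from \((x,y)\) and use it from \((x+h,y-h)\) via comparison of SDEs. The coupled \(Y_{y-h}\) satisfies \(Y_{y-h}\le Y_y-h\) until the diagonal, by comparison with the lower drift. Monotonicity of \(k\) on the relevant side and the \(\tau_{-x}\) truncation then give \(U(x+h,y-h)-k \le U(x,y)-k<0\). The care needed is that \(k(x+\cdot)\) is increasing only above \(-x\), which is handled by stopping at the diagonal.

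The right-interval property of \(\mathcal I\), the monotonicity of \(d\), and the stickiness of contact then follow immediately. For vertical expansion under \ref{ass_3}, I would show \(D_xU\ge D_xk\) fails appropriately; equivalently, \(k(x+h+\cdot)-k(x+\cdot)\) is a "more convex" obstacle. Log-concavity of \(k'\) should make \(\Theta\) non-increasing in \(x\) and make the balancing level move monotonically. Concretely, I would differentiate \(J_x(q)\) in \(x\) and show the root moves so that \(a\) decreases and \(b\) increases.

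\textbf{Items (ii) and (iii).} Item (ii) writes \(J_x(Q(x))=0\) together with \(\Phi(x,a)=\Phi(x,b)=Q\), or \(a=-x\). I would then apply the implicit function theorem, with \(H\) the relevant Jacobian, to obtain the ODE system at regular points. Item (iii) uses monotonicity of \(k(x+y)\) in \(x\) for \(x+y\ge0\) directly. Monotonicity in \(y\) follows from the formula and from \(Q-cy\ge\) the relevant derivatives, or from comparison of SDEs. Smooth fit is Lemma~\ref{lem_gluing_fixed_x}; horizontal smooth fit follows by differentiating \eqref{value_inside_C_coefficients} in \(x\) and using the boundary ODEs.

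\textbf{Item (iv), the main obstacle.} Contact occurs iff \(\Phi(x,-x)\) exceeds the balancing level, so the key quantity is \(D_y\Phi\) at \(y=-x\). Near the diagonal this is \(\tfrac12 k''+G(-x)k'(0+)+c\), with \(G(-x)\to-1\). The term \(c-k'(0+)\) therefore decides whether \(\Phi\) initially decreases from the edge. If \(k'(0+)\le c\), there is no decrease at the edge, so \(a(x)>-x\) should follow. If \(k'(0+)>c\), one must show that the valley's negative area dominates, so that \(J\) is still negative at \(q=\Phi(x,-x)\). The growth condition \(k=o(e^{2r})\) controls the contribution \(\int(\Phi-q)\,dG\) on the right branch, since \(dG\sim e^{-2y}\). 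The asymptotics \(x+a(x)\to\rho\) come from the same computation: the decrease set concentrates where \(k'\le c\) as \(G\to-1\).
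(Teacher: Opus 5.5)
Your diagonal-expansion argument (coupling $Z=Y_{y-h}+h\le Y_y$ and truncating at the diagonal), the implicit-function derivation in (ii), and the comparison and smooth-fit arguments in (iii) are essentially the paper's. The gaps are in the vertical expansion and in (iv).

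\textbf{Vertical expansion under \ref{ass_3}.} The mechanism you indicate does not prove it. That $\Theta$ is non-increasing in $x$ only gives $N_x\subseteq N_{x+h}$. This is an inclusion of \emph{decrease sets}, which are proper subsets of the continuation sections. The monotonicity of the balancing level needs no \ref{ass_3} at all. At fixed $q$ in the interior regime, $\partial_x J_x(q)=\int_a^b k''(x+z)\,dz\ge0$, so $q_x$ increases for any convex $k$. But $\Phi(x,\cdot)$ itself shifts upward ($D_x\Phi=k''(x+y)/G'(y)\ge0$), so a rising level does not by itself determine how $a,b$ move. By Proposition~\ref{prop_boundary_regular_odes}, $a'\le0\le b'$ is equivalent to $k''(x+b)/G'(b)\le M\le k''(x+a)/G'(a)$. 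Here $M$ is the chord slope of $g\mapsto k'(x+G^{-1}(g))$ over $[G(a),G(b)]$. This is a global inequality that fails for general pairs: for $k(s)=s^2$, $y_0=0$, $a=-1$, $b=1$ one gets $M=2/\tanh 1<2\cosh^2 1=k''/G'(b)$. So it would have to be extracted from the balancing equations, and this route also needs $H\neq0$, which the theorem does not assume. The missing idea is a strategy-transfer argument. Assumption \ref{ass_3} is precisely concavity of $T_h(r)=k(k^{-1}(r)+h)$. Take $X=x+Y_y(\tau)\ge0$ and $v=k^{-1}(\ex[k(X)])<u=x+y$. Jensen gives $\ex[k(X+h)]\le k(v+h)$, and convexity gives $k(u+h)-k(v+h)\ge k(u)-k(v)$. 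Hence any $\tau$ beating immediate stopping at $(x,y)$ also beats it at $(x+h,y)$ (Lemma~\ref{lem_translation_deficit}).

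\textbf{Item (iv).} Contact is decided by comparing $\Phi(x,-x)$ with the balancing level $q_x$, which is a global condition; the sign of $D_y\Phi$ at the edge does not decide it. Take $k(s)=e^{1.5s}$, $c=1$. Then $\lim_{y\downarrow-x}H(x,y)=2.125+1.5\,G(-x)>0.625$ for every $x$, so $\Phi(x,\cdot)$ increases off the edge. (Your edge heuristic also drops the $\tfrac12 k''(0+)$ term.) Yet $k'(0+)>c$ and $k=o(e^{2r})$ force eventual contact by Proposition~\ref{th_lower_boundary_dichotomy}; this is the paper's delayed-contact example. So the inference ``no decrease at the edge, hence $a(x)>-x$'' is invalid. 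For $k'(0+)\le c$ the paper uses Lemma~\ref{lem_slope_stop}. Before the diagonal, $G(Y)\ge G(-x)$, so convexity and Wald give $\ex[k(Z(\tau))+c\tau]\ge k(s)+(k'(s)G(-x)+c)\ex[\tau]$. Moreover $k'(s)G(-x)+c\ge0$ for small $s$, because $k'(0+)\le c<c/|G(-x)|$. For $k'(0+)>c$, your criterion $J_x(\Phi(x,-x))<0$ is right in principle, but you give no estimate. The paper instead uses the exit-from-$(0,R)$ strategy. Its cost has right derivative at $s=0$ tending to $c\bigl(1-\frac{2R}{e^{2R}-1}\bigr)+\frac{2(k(R)-k(0))}{e^{2R}-1}$, which is $<k'(0+)$ for large $R$. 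Finally, the limit $\rho$ does not come from the decrease set. As $G\to-1$, that set sits where $k'(s)>c+\tfrac12 k''(s)$; for $k(s)=s^2$ its lower end tends to $(1+c)/2$, while $d(x)\to\rho=c/2$. You need two bounds. First, $d\ge\rho$ follows from the same slope-stop bound. Second, $d\le s$ for each $s>\rho$ follows from an exit-from-$(\rho,R)$ strategy, whose limiting cost is $k(\rho)+c(s-\rho)<k(s)$.
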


The theorem is intentionally stated in geometric terms. The precise analytic characterization of the birth point, the free-boundary equations, and the regularity statements, are given in the propositions below.

\subsection{Birth of a nonempty continuation region}\label{subsec:birth_continuation_sections}

We characterize now the set \(\mathcal I\) of \eqref{def_I_active} analytically. Recall from \eqref{phi_derivative} that the derivative of \(\Phi(x, \cdot)\) is controlled by the quantity \(\mathcal L k(x + \cdot)+c\). In bivariate notation, we define
\begin{equation}\label{def_H_2d}
    H(x,y)
    \coloneqq
    \frac12 k''(x+y)+G(y)k'(x+y)+c,
    \qquad x\ge0,\quad y>-x,
\end{equation}
and note
\begin{equation}\label{Phi_y_H_relation_2d}
    D_y \Phi(x,y)
    =
    \frac{2}{G'(y)}H(x,y),
    \qquad x\ge0,\quad y>-x.
\end{equation}
Since \(G'>0\), the sign of \(D_y \Phi\) is the same as the sign of \(H\).

\begin{Prop}\label{prop_birth_sections}
    Under the standing assumptions of this section,
    \(
        x\in\mathcal I
    \)
    if and only if
    \begin{equation}\label{criterion_nonempty_section}
        \inf_{s>0}
        \left\{
            \frac12 k''(s)+G(s-x)k'(s)+c
        \right\}<0.
    \end{equation}
    Equivalently, under the assumption \ref{ass_2} 
    \begin{equation}\label{criterion_nonempty_section_Q}
        x\in\mathcal I
        \quad\Longleftrightarrow\quad
        \exists\,s>0
        \text{ such that }
        G(s-x)<-P_k(s),
    \end{equation}
    with $P_k(\cdot)$ as in \eqref{Qk_def}
    Consequently, and with the convention \(\inf\emptyset=\infty\),
    \begin{equation}\label{x_under_formula_Q}
        x_{birth}
        \coloneqq \inf \mathcal{I}
        =
        \inf\left\{
            x\ge0:
            \exists\,s>0
            \text{ such that }
            G(s-x)<-P_k(s)
        \right\}.
    \end{equation}
\end{Prop}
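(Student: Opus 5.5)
The plan is to start from the characterization \eqref{def_I_via_phi}, which itself comes from Proposition~\ref{prop_Phi_prime} and Theorem~\ref{th_fixed_x_solution}, and combine it with Lemma~\ref{lem_decrease_points_in_continuation}. By \eqref{Phi_y_H_relation_2d}, the sign of $D_y\Phi(x,y)$ is the sign of $H(x,y)$ of \eqref{def_H_2d}. Hence $x\in\mathcal I$ if and only if $H(x,y)<0$ for some $y>-x$.

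\emph{The direction ``$x\in\mathcal I$ implies decrease somewhere.''} Suppose $H(x,\cdot)\ge0$ everywhere. Then $\Phi_x$ is non-decreasing, and Proposition~\ref{prop_Phi_prime} gives $\mathcal C_x=\varnothing$. \emph{The converse direction.} If $H(x,y)<0$ at some $y$, Lemma~\ref{lem_decrease_points_in_continuation} puts $y$ in $\mathcal C_x$.

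Next I substitute $s=x+y$, which ranges over $(0,\infty)$ as $y$ ranges over $(-x,\infty)$. Then $H(x,y)=\tfrac12k''(s)+G(s-x)k'(s)+c$. The existence of some $s>0$ at which this quantity is negative is equivalent to its infimum over $s>0$ being negative. This is because an infimum is $<0$ if and only if some value is $<0$, with no attainment issues in that direction. This gives \eqref{criterion_nonempty_section}.

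For \eqref{criterion_nonempty_section_Q}, assumption \ref{ass_2} gives $k'(s)>0$ for every $s>0$. Dividing $H<0$ by $k'(s)$ then yields the equivalent inequality $\frac{k''(s)+2c}{2k'(s)}+G(s-x)<0$, that is, $G(s-x)<-P_k(s)$. Finally, \eqref{x_under_formula_Q} follows by taking the infimum over $x$ of the set so characterized, with $\inf\emptyset=\infty$.

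No step is a genuine obstacle. The only point requiring care is that $k''$ exists only on $(0,\infty)$, so the lower edge $y=-x$, that is $s=0$, must be excluded. This is consistent with $\mathcal I$ being defined through $y>-x$ in \eqref{def_I_via_phi}, and with $H$ being continuous there by \ref{ass_1}, which is what Lemma~\ref{lem_decrease_points_in_continuation} requires.
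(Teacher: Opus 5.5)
Your proposal is correct and follows the paper's proof. The paper reduces to the existence of $y>-x$ with $H(x,y)<0$ via \eqref{def_I_via_phi} and \eqref{Phi_y_H_relation_2d}, substitutes $s=x+y$, and divides by $k'(s)>0$ under \ref{ass_2}; your only addition is re-deriving \eqref{def_I_via_phi} directly from Proposition~\ref{prop_Phi_prime} and Lemma~\ref{lem_decrease_points_in_continuation}, which shows the first equivalence needs only \ref{ass_1} and not the one-valley standing hypothesis.
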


\begin{proof}
    By \eqref{def_I_via_phi} and \eqref{Phi_y_H_relation_2d}, we have \(x\in\mathcal I\) if and only if there exists \(y>-x\) such that \(H(x,y)<0\). Writing \(s=x+y\), this is equivalent to the existence of \(s>0\) such that
    \(
        \frac{1}{2} k''(s)+G(s-x)k'(s)+c<0,
    \)
    which proves \eqref{criterion_nonempty_section}.
    Under \ref{ass_2}, we have \(k'(s)>0\) for \(s>0\), and the last inequality is equivalent to
    \(
    G(s-x) < -P_k(s)
    \)
    in the notation of \eqref{Qk_def}. This gives \eqref{criterion_nonempty_section_Q}. The expression \eqref{x_under_formula_Q} follows directly from the definition of
    \(x_{birth}\).
\end{proof}

The expression \eqref{x_under_formula_Q} is often the most convenient way to determine when nonempty continuation sections first appear. It also separates two different possibilities, which should not be confused: one may have \(x_{birth}=0\) because continuation appears immediately at \(x=0\), or because continuation appears only for arbitrarily small positive values of \(x\).

\begin{remark}[Special Functions]\label{remark_special_functions}
For functions of power form
\(
    k(s)=\alpha s^\beta+\gamma,
    \, s\ge0,
\)
with $\alpha>0,\, \beta\ge1$, the function of \eqref{def_Qx_Thetax}
becomes
\[
    P_k(s)
    =
    \frac{\beta-1}{2s}
    +
    \frac{c}{\alpha\beta}s^{1-\beta},
    \qquad s>0.
\]
If \(\beta>1\), then \(\inf_{s>0}P_k(s)=0\), and therefore \(\mathcal I\neq\varnothing\). If \(\beta=1\), then \(P_k(s)=c/\alpha\). Hence, nonempty continuation sections can occur if and only if \(c<\alpha\); in that case they appear for sufficiently large \(x\), while if
\(c\ge \alpha\), then \(\mathcal I=\varnothing\).

For exponential costs
\(
    k(s)=\alpha(e^{\lambda s}-\gamma),
    \, s\ge0,
\)
with $\alpha,\lambda>0$, the function of \eqref{def_Qx_Thetax} is
\[
    P_k(s)
    =
    \frac{\lambda}{2}
    +
    \frac{c}{\alpha\lambda}e^{-\lambda s},
    \qquad s>0.
\]
Thus, \(\inf_{s>0}P_k(s)=\lambda/2\). Hence, nonempty continuation sections can occur only in the subcritical regime \(\lambda<2\). In the subcritical regime, \(\mathcal I\neq\varnothing\); if \(\lambda\ge2\), then \(\mathcal I=\varnothing\), and immediate stopping is optimal in every fixed-\(x\) section. See Section \ref{sec:examples} for further discussion and illustrations.
\end{remark}

\subsection{Expansion properties of the continuation region}
\label{subsec:expansion_properties}

We record now the main global geometric properties of the continuation region.

The first is a diagonal expansion. It says that if it is optimal to continue from a point \((x,y)\), then it is also optimal to continue after increasing the first coordinate by \(h\) and decreasing the displacement by the same amount. This transformation keeps the terminal argument \(x+y\) unchanged, but makes the posterior drift more favorable. It is the geometric reason why the lower boundary can touch the diagonal \(y=-x\) at most once.

\begin{Prop}\label{prop_diagonal_expansion}
    If \((x,y)\in\mathcal C\) and \(h\ge0\), then
    \(
        (x+h,y-h)\in\mathcal C.
    \)
    Consequently, if \(\mathcal I\neq\varnothing\), then \(\mathcal I\) is a right interval $(x_{birth},\infty)$ or $[x_{birth},\infty)$. Moreover, for every \(x\in\mathcal I\) and every \(h\ge0\),
    \begin{equation}\label{diag_expansion_boundaries}
        a(x+h)\le a(x)-h,
        \qquad
        b(x+h)\ge b(x)-h.
    \end{equation}
    In particular, the distance-from-the-diagonal function $d(\cdot)$ of \eqref{def_d_lower_boundary} is non-increasing on \(\mathcal I\). Hence, if \(a(\bar x)=-\bar x\) for some \(\bar x\in\mathcal I\), then we have
    \(
        a(x)=-x
    \)
    for all
    \(
        x\in\mathcal I,\, x\ge\bar x.
    \)
\end{Prop}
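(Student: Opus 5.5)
The plan is a coupling argument at the level of the stopping problem itself, using the same stopping time from both starting points. Fix $(x,y)\in\mathcal C$ and $h\ge0$, and set $x'=x+h$, $y'=y-h$, so that $x'+y'=x+y$ and the stopping payoffs coincide: $k(x'+y')=k(x+y)$. To show $U(x',y')<k(x'+y')$ it suffices to find a stopping time from $(x',y')$ whose expected cost is no larger than that of the optimal time $\tau^*$ from $(x,y)$; by Theorem~\ref{th_fixed_x_solution} the latter cost equals $U(x,y)<k(x+y)$.

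\textbf{Coupling.} Drive $Y_y$ and $Y_{y'}$ with the same Brownian motion $N$. Put $Z(t)\coloneqq x+Y_y(t)$ and $Z'(t)\coloneqq x'+Y_{y'}(t)$, both started at $s=x+y$. Their dynamics are
\[
    dZ=G(Z-x)\,dt+dN,
    \qquad
    dZ'=G(Z'-x')\,dt+dN .
\]
Since $G$ is increasing and $x'\ge x$, the drift of $Z'$ is dominated pointwise, $G(z-x')\le G(z-x)$. By pathwise comparison for ODE-type SDEs with common noise and Lipschitz drift, $Z'(t)\le Z(t)$ for all $t$ almost surely. Use the time $\tau^*$, the exit time of $Z$ from $(x+a(x),x+b(x))$, as a stopping time for the primed problem. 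It is a stopping time of the common filtration generated by $N$, which coincides with $\mathbb F^{Y}$ for either starting point, as noted after \eqref{sde_Y_arbitrary}.

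\textbf{Comparing costs.} The remaining task is to show $\ex[k(Z'(\tau^*))]\le\ex[k(Z(\tau^*))]$. Since $k$ is even and convex, it is nondecreasing in $|z|$, but $Z'\le Z$ alone does not give $|Z'|\le|Z|$ once $Z'$ becomes negative. I would therefore stop the primed process at $\tau^*\wedge\sigma'$, where $\sigma'$ is the first hitting time of $0$ by $Z'$, i.e.\ of the diagonal $y=-x'$. This truncation is harmless, since $k$ is minimized at $0$. On $\{\sigma'\le\tau^*\}$ the primed cost is $k(0)\le k(Z(\tau^*))$, and the time term only decreases. On the complement we have $0\le Z'(\tau^*)\le Z(\tau^*)$, with $Z(\tau^*)\ge0$ because $Z\ge Z'>0$ there, so monotonicity of $k$ on $[0,\infty)$ gives $k(Z'(\tau^*))\le k(Z(\tau^*))$. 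Hence $U(x',y')\le U(x,y)<k(x+y)=k(x'+y')$. This strict inequality also shows $y'>-x'$ is an admissible point of $\mathcal D$: we have $x'+y'=x+y\ge0$, and the point cannot lie on the diagonal with a strict gap unless it is still in $\mathcal C$. I expect this cost comparison to be the main subtlety.

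\textbf{Consequences.} If $x\in\mathcal I$, pick $y\in\mathcal C_x$; the diagonal expansion puts $y-h\in\mathcal C_{x+h}$, so $\mathcal I$ is closed to the right and is an interval of the form $(x_{birth},\infty)$ or $[x_{birth},\infty)$. For the boundary inequalities \eqref{diag_expansion_boundaries}, the shift $y\mapsto y-h$ maps $(a(x),b(x))$ into the interval $\mathcal C_{x+h}=(a(x+h),b(x+h))$. Taking infimum and supremum gives $a(x+h)\le a(x)-h$ and $b(x+h)\ge b(x)-h$. Adding $x+h$ to the first inequality gives $d(x+h)\le d(x)$. Since $d\ge0$, the case $d(\bar x)=0$ forces $d(x)=0$, i.e.\ $a(x)=-x$, for every $x\ge\bar x$ in $\mathcal I$.
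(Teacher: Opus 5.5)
Your proposal is correct and follows essentially the same route as the paper. Both arguments couple $Y_y$ and $Y_{y-h}$ through the common innovations process, apply the comparison theorem via $G(\cdot-h)\le G(\cdot)$, truncate when the shifted process hits the diagonal $y=-(x+h)$, use that $k$ is non-decreasing on $[0,\infty)$, and then derive \eqref{diag_expansion_boundaries} and the stickiness of contact in the same set-theoretic way.

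There are two small differences. The paper starts from an arbitrary stopping time $\tau\le\tau_{-x}$ whose cost is strictly below $k(x+y)$, which follows directly from the definition of $U$ as an infimum. Its first claim therefore does not need Theorem~\ref{th_fixed_x_solution}, whereas you invoke it to use the optimal exit time $\tau^*$. Also, your remark about admissibility is unnecessary: $x'+y'=x+y>0$ holds directly for every $(x,y)\in\mathcal C$.
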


\begin{proof}
    Let \((x,y)\in\mathcal C\). By definition, there exists a stopping time \(\tau\) such that
    \begin{equation}\label{expect_st_region}
        \ex\big[k(x+Y_y(\tau))+c\tau\big]<k(x+y).
    \end{equation}
    As in the fixed-\(x\) problem, replacing \(\tau\) by \(\tau\wedge\tau_{-x}\) cannot increase the cost. Thus, we may assume that
    \(\tau\le\tau_{-x}\).
    Now let \(\widetilde Y(\cdot)\) be the solution starting from \(y-h\), driven by the same Brownian motion as \(Y_y(\cdot)\), and set
    \[
        Z(t)\coloneqq \widetilde Y(t)+h.
    \]
    Then we note \(Z(0)=y=Y_y(0)\), and
    \(
        dZ(t)=G(Z(t)-h)\,dt+dN(t).
    \)
    Since \(G\) is increasing, \(G(z-h)\le G(z)\) for all \(z\). By the comparison theorem for one-dimensional diffusions (see, e.g., \cite[Proposition 5.2.18 on p. 293]{BMSC}), we then obtain
    \(
    Z(t)\le Y_y(t)
    \)
    for all $t\ge0$.

    Let
    \[
        \widetilde\tau_{-(x+h)}
        \coloneqq
        \inf\{t\ge0:\widetilde Y(t)\le -(x+h)\},
        \qquad
        \sigma\coloneqq \tau\wedge\widetilde\tau_{-(x+h)}.
    \]
    The stopping time \(\sigma\) is admissible for the problem started from \((x+h,y-h)\). If \(\sigma<\tau\), then \(x+h+\widetilde Y(\sigma)=0\), and the terminal cost is minimal. If
    \(\sigma=\tau\), then
    \[
        x+h+\widetilde Y(\tau)
        =
        x+Z(\tau)
        \le
        x+Y_y(\tau),
    \]
    and both sides are nonnegative. Since \(k\) is non-decreasing on
    \([0,\infty)\), we have in all cases
    \[
        k(x+h+\widetilde Y(\sigma))
        \le
        k(x+Y_y(\tau)).
    \]
    Since we have also \(\sigma\le\tau\), we deduce from \eqref{expect_st_region}
    \[
        \ex\big[k(x+h+\widetilde Y(\sigma))+c\sigma\big]
        \le
        \ex\big[k(x+Y_y(\tau))+c\tau\big]
        <
        k(x+y)
        =
        k(x+h+y-h).
    \]
    Hence, immediate stopping is not optimal at \((x+h,y-h)\), and so \((x+h,y-h)\in\mathcal C\).

    The set-theoretic consequences follow from the interval structure of the vertical sections. Indeed, if \(\mathcal C_x=(a(x),b(x))\), then the diagonal expansion gives
    \[
        (a(x)-h,b(x)-h)\subseteq \mathcal C_{x+h}.
    \]
    Hence \(x+h\in\mathcal I\) and
    \[
        a(x+h)\le a(x)-h,
        \qquad
        b(x+h)\ge b(x)-h.
    \]
    This proves \eqref{diag_expansion_boundaries} and shows that \(\mathcal I\) is a right interval. Finally, we note
    \[
        d(x+h)=x+h+a(x+h)\le x+a(x)=d(x),
    \]
    so the function \(d(\cdot)\) of \eqref{def_d_lower_boundary} is non-increasing. Since \(d(x)\ge0\), if \(d(\bar x)=0\), then \(0\le d(x)\le d(\bar x)=0\) for all \(x\ge\bar x\) in \(\mathcal I\). Thus, \(a(x)=-x\) for all such \(x\).
\end{proof}

The second expansion property of the continuation region is vertical. It is stronger than the diagonal one and will be used later in Theorem \ref{th_original_threshold} to show that the original one-dimensional problem is governed by a single threshold. This property relies on the log-concavity assumption \ref{ass_3}. We first isolate an elementary consequence of log-concavity which is needed in the proof.

\begin{Lem}\label{lem_translation_deficit}
    Assume \ref{ass_3}. Fix a constant \(h\ge0\). If \(X\ge0\) is a random variable, and \(u\ge0\), \(r\ge0\) are real constants such that
    \(
        \ex[k(X)]+r<k(u),
    \)
    then
    \[
        \ex[k(X+h)]+r<k(u+h).
    \]
\end{Lem}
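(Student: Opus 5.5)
The plan is to encode the translation $s\mapsto s+h$ as a concave, increasing transformation of the cost level $k(s)$, and then apply Jensen's inequality. We may assume $h>0$, since $h=0$ is trivial.

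\textbf{Step 1: the transfer map.} Under \ref{ass_3}, $k'>0$ on $(0,\infty)$, so $k$ is continuous and strictly increasing on $[0,\infty)$. Let $k^{-1}:[k(0),\infty)\to[0,\infty)$ be its inverse, and define
\[
    \varphi(v)\coloneqq k\big(k^{-1}(v)+h\big),\qquad v\ge k(0).
\]
Then $k(s+h)=\varphi(k(s))$ for all $s\ge0$. For $v>k(0)$, writing $s=k^{-1}(v)>0$, the chain rule gives
\[
    \varphi'(v)=\frac{k'(s+h)}{k'(s)} .
\]
We need three properties of this ratio.
\begin{itemize}[leftmargin=2em]
    \item It is positive, because $k'>0$.
    \item It is at least $1$, because $k$ is convex, so $k'$ is non-decreasing.
    \item It is non-increasing in $s$, because $\log k'(s+h)-\log k'(s)=\int_s^{s+h}(k''/k')$ and $k''/k'$ is non-increasing by \ref{ass_3}.
\end{itemize}
Since $s=k^{-1}(v)$ is increasing in $v$, $\varphi'$ is non-increasing in $v$. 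Together with continuity of $\varphi$ at $k(0)$, this makes $\varphi$ concave, strictly increasing, and satisfying $\varphi(v)-\varphi(w)\ge v-w$ for $v\ge w$.

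\textbf{Step 2: Jensen.} The hypothesis forces $\ex[k(X)]<\infty$. Since $X\ge0$, we have $k(X)\ge k(0)$, and concavity of $\varphi$ gives
\[
    \ex[k(X+h)]=\ex[\varphi(k(X))]\le\varphi\big(\ex[k(X)]\big).
\]
The hypothesis also says $\ex[k(X)]<k(u)-r$. In particular $k(u)-r>k(0)$, so this point lies in the domain of $\varphi$. Strict monotonicity of $\varphi$ then gives $\varphi(\ex[k(X)])<\varphi(k(u)-r)$.

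Next apply the slope bound $\varphi(v)-\varphi(w)\ge v-w$ with $v=k(u)$ and $w=k(u)-r$:
\[
    \varphi(k(u)-r)\le\varphi(k(u))-r=k(u+h)-r .
\]
Chaining the three displays yields $\ex[k(X+h)]+r<k(u+h)$, which is the claim.

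\textbf{Main obstacle.} The delicate step is the endpoint behaviour at $s=0$. There $k$ need not be differentiable, and $k'(0+)$ may vanish or the ratio may blow up. Two points settle this. First, concavity of $\varphi$ on $(k(0),\infty)$, combined with continuity at $k(0)$, extends to $[k(0),\infty)$, so Jensen still applies when $X$ has an atom at $0$. Second, the increment inequality $\varphi(v)-\varphi(w)\ge v-w$ can be obtained by integrating $\varphi'\ge1$ over $(w,v)\subset(k(0),\infty)$, which avoids the endpoint altogether.
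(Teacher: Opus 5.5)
Your proof is correct and takes essentially the same route as the paper. The paper also introduces the transfer map $T_h(r)=k\big(k^{-1}(r)+h\big)$, which is your $\varphi$, and derives its concavity from \ref{ass_3}; it does so through the sign of $T_h''$, whereas you use the monotonicity of $\varphi'(v)=k'(s+h)/k'(s)$. It then applies Jensen, and finishes with convexity of $k$ in the form $k(u+h)-k(v+h)\ge k(u)-k(v)$ for $v=k^{-1}(\ex[k(X)])<u$, which is exactly your slope bound $\varphi'\ge1$ (your chaining through $\varphi(k(u)-r)$ and your endpoint care at $k(0)$ are only cosmetic differences).
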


\begin{proof}
    Since \(k'(\cdot)>0\) on \((0,\infty)\), the restriction of \(k(\cdot)\) to
    \([0,\infty)\) is strictly increasing. For fixed \(h\ge0\), we claim that the function
    \[
        T_h(r)\coloneqq k\big(k^{-1}(r)+h\big),
        \qquad r\in k([0,\infty)).
    \]
    is concave. Indeed, writing \(r=k(u)\), a direct
    calculation and the assumption \ref{ass_3} give
    \[
        T_h''(r)
        =
        \frac{k'(u+h)}{(k'(u))^2}
        \left(
            \frac{k''(u+h)}{k'(u+h)}
            -
            \frac{k''(u)}{k'(u)}
        \right)
        \le0.
    \]

    We set now
    \[
        v\coloneqq k^{-1}\big(\ex[k(X)]\big),
    \]
    and note that the strict inequality in the statement implies \(v<u\). Moreover, Jensen's
    inequality applied to the concave function \(T_h\), gives
    \[
        \ex[k(X+h)]
        =
        \ex\big[T_h(k(X))\big]
        \le
        T_h\big(\ex[k(X)]\big)
        =
        k(v+h).
    \]
    Since \(k(\cdot)\) is convex, the function
    \(h\mapsto k(u+h)-k(v+h)\) is non-decreasing whenever \(u>v\), and
    \[
        k(u+h)-k(v+h)\ge k(u)-k(v)
        =
        k(u)-\ex[k(X)].
    \]
    Combining the two inequalities, we deduce
    \(
        k(u+h)-\ex[k(X+h)]
        \ge
        k(u)-\ex[k(X)]
        >
        r,
    \)
    i.e., the desired claim.
\end{proof}

\begin{Prop}\label{prop_vertical_expansion}
    Assume \ref{ass_3}. If \((x,y)\in\mathcal C\) and \(h\ge0\), then
    \(
        (x+h,y)\in\mathcal C.
    \)
    Consequently, the upper boundary $b(\cdot)$ is non-decreasing on $\mathcal{I}$: for every \(x\in\mathcal I\) and every \(h\ge0\), we have
    \begin{equation}\label{vertical_expansion_boundaries}
        a(x+h)\le a(x),
        \qquad
        b(x+h)\ge b(x).
    \end{equation}
\end{Prop}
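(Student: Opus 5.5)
The plan is to follow the template of the proof of Proposition~\ref{prop_diagonal_expansion}, with Lemma~\ref{lem_translation_deficit} replacing the pathwise comparison argument. Let \((x,y)\in\mathcal C\) and \(h\ge0\). By definition of \(\mathcal C\) there is a stopping time \(\tau\) with
\[
    \ex\big[k(x+Y_y(\tau))+c\tau\big]<k(x+y).
\]
As before, replacing \(\tau\) by \(\tau\wedge\tau_{-x}\) cannot increase the cost, and we may discard the case \(\ex[\tau]=\infty\). Thus we may assume \(\tau\le\tau_{-x}\) and \(\ex[\tau]<\infty\). Then the random variable \(X\coloneqq x+Y_y(\tau)\) is nonnegative, and \(u\coloneqq x+y\ge0\) because \((x,y)\in\mathcal D\). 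The same stopping time \(\tau\) is admissible from the starting point \((x+h,y)\), since the dynamics of \(Y_y(\cdot)\) do not depend on the parameter \(x\).

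The key step is the application of Lemma~\ref{lem_translation_deficit} with \(r\coloneqq c\,\ex[\tau]\ge0\). The displayed inequality reads \(\ex[k(X)]+r<k(u)\), so the lemma yields
\[
    \ex\big[k(x+h+Y_y(\tau))\big]+c\,\ex[\tau]<k(x+h+y).
\]
Hence \(U(x+h,y)<k(x+h+y)\), that is, \((x+h,y)\in\mathcal C\). The only points to watch here are the sign requirements of the lemma, \(X\ge0\) and \(u\ge0\), and these are exactly why we first truncate at \(\tau_{-x}\) and use the restriction to \(\mathcal D\). No comparison theorem is needed, because the process itself is unchanged and only the obstacle is shifted.

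For the boundary consequences, fix \(x\in\mathcal I\) with \(\mathcal C_x=(a(x),b(x))\). The vertical expansion gives \((a(x),b(x))\subseteq\mathcal C_{x+h}\), so \(x+h\in\mathcal I\). Since \(\mathcal C_{x+h}=(a(x+h),b(x+h))\) is a single interval, the inclusion forces \(a(x+h)\le a(x)\) and \(b(x+h)\ge b(x)\). This is \eqref{vertical_expansion_boundaries}, and in particular \(b(\cdot)\) is non-decreasing on \(\mathcal I\).

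I expect no real obstacle here, since the genuine work was already done in the concavity of \(T_h\) in Lemma~\ref{lem_translation_deficit}. The one subtle point is that the argument needs a single strictly improving stopping time, rather than an optimal one, so that the strict inequality transfers cleanly.
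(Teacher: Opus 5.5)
Your proposal is correct and follows essentially the same route as the paper. Both truncate at \(\tau_{-x}\) so that \(X=x+Y_y(\tau)\ge0\), apply Lemma~\ref{lem_translation_deficit} with \(r=c\,\ex[\tau]\), reuse the same stopping time from \((x+h,y)\), and read off the boundary inequalities from the single-interval structure of the sections.
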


\begin{proof}
    Let \((x,y)\in\mathcal C\). As above, there exists a stopping time
    \(\tau\le\tau_{-x}\) such that
    \(
        \ex\big[k(x+Y_y(\tau))+c\tau\big]<k(x+y).
    \)
    Set
    \(
        X\coloneqq x+Y_y(\tau),
        \,
        u\coloneqq x+y.
    \)
    Since \(\tau\le\tau_{-x}\), we have \(X\ge0\) a.s. Lemma
    \ref{lem_translation_deficit} gives then, for any $h \ge 0$,
    \(
        \ex\big[k(X+h)\big]+c\,\ex[\tau]<k(u+h).
    \)
    Equivalently,
    \[
        \ex\big[k(x+h+Y_y(\tau))+c\tau\big]<k(x+h+y).
    \]
    The same stopping time \(\tau\) is admissible for the problem started from
    \((x+h,y)\).
    Hence, immediate stopping is not optimal at \((x+h,y)\), and so
    \((x+h,y)\in\mathcal C\).
    The boundary inequalities and the monotonicity of $b(\cdot)$ follow now readily.
\end{proof}

\subsection{Free-boundaries: their characterization and regularity}
\label{subsec:free_boundary_equations}

The expansion result of the previous section shows that the lower boundary can touch the diagonal \(y=-x\) at most at one point: once \(d(x)=x+a(x)\) becomes zero, it remains zero to the right. Thus, the two-dimensional free-boundary problem has two natural regimes. At points where \(d(x)>0\), both endpoints of the section \(\mathcal C_x=(a(x),b(x))\) are genuine free boundaries. At points where \(d(x)=0\), the lower endpoint is pinned to the boundary of the state space, and only the upper endpoint remains free.

We present now the integral equations satisfied by the boundaries $a(\cdot), b(\cdot)$. The next result is simply the two-dimensional restatement of the fixed-\(x\) construction in Theorem~\ref{th_fixed_x_solution}. For each fixed \(x\in\mathcal I\), the continuation section is the interval \((a(x),b(x))\), and the balancing condition \eqref{eq_main_balancing_condition} from Section~\ref{sec:one_d_problem} leads to the corresponding free-boundary equations \eqref{interior_boundary_system_2d} below. The only distinction is whether the lower endpoint is interior or pinned to the boundary of the state-space.

\begin{Prop}[Free-Boundary Integral Equations]\label{prop_free_boundary_equations}
    Recall the notation of \eqref{def_phi_2d}, and let \(x\in\mathcal I\). If \(a(x)>-x\), then the free-boundaries $a(\cdot), b(\cdot)$ satisfy
    \begin{equation}\label{interior_boundary_system_2d}
        \Phi(x,a(x))=\Phi(x,b(x)),
        \qquad
        \int_{a(x)}^{b(x)}
            \big(\Phi(x,z)-\Phi(x,a(x))\big)\,dG(z)=0.
    \end{equation}
    If \(a(x)=-x\), then the free-boundary $b(\cdot)$ satisfies
    \begin{equation}\label{active_boundary_system_2d}
        \int_{-x}^{b(x)}
            \big(\Phi(x,z)-\Phi(x,b(x))\big)\,dG(z)=0.
    \end{equation}
\end{Prop}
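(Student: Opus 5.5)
The plan is to read the two systems off the fixed-\(x\) construction, since Proposition~\ref{prop_free_boundary_equations} only restates Theorem~\ref{th_fixed_x_solution} in bivariate notation. Fix \(x\in\mathcal I\). By the standing hypothesis of this section, the one-interval regime \eqref{one_interval_phi_geometry} holds for this \(x\). Theorem~\ref{th_fixed_x_solution} then identifies \(\mathcal C_x=(a_x,b_x)\) with \(a_x=a_x(q_x)\) and \(b_x=b_x(q_x)\), where \(q_x\) is the unique zero of \(J_x\) from Lemma~\ref{lem_uniqueness}. The continuation region is uniquely determined, so \(a(x)=a_x\) and \(b(x)=b_x\). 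In addition, \(\Phi_x(\cdot)=\Phi(x,\cdot)\) by \eqref{def_phi_2d}, and the boundary value \eqref{def_phi_boundary_2d} agrees with \eqref{def_phi} at \(y=-x\).

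The first step is to identify the level. By construction \eqref{def_bq_fixed_x}, we have \(\Phi_x(b_x(q))=q\) for every \(q\in(\underline q_x,\overline q_x)\). Here I would note that \(\Phi_x\) is continuous on \([r_x,\infty)\), that \(\Phi_x(r_x)=\underline q_x<q\), and that \(\Phi_x\to\infty\); hence the infimum in \eqref{def_bq_fixed_x} is attained with equality. It follows that \(q_x=\Phi(x,b(x))=Q(x)\).

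In the interior case \(a(x)>-x\), definition \eqref{def_aq_fixed_x} gives \(a_x=\sup\{y\in[-x,\ell_x]:\Phi_x(y)\le q_x\}\). This supremum is strictly larger than \(-x\). I would then argue by continuity of \(\Phi_x\) on \([-x,\ell_x]\) that \(\Phi_x(a_x)=q_x\). Indeed, \(\Phi_x(a_x)\le q_x\) holds by closedness of the sublevel set. If the inequality were strict, continuity would push the supremum to the right, unless \(a_x=\ell_x\); but \(\Phi_x(\ell_x)=\overline q_x>q_x\), so that exception cannot occur. This argument uses the right-continuity of \(\Phi_x\) at \(-x\), which holds since \(k'(0+)\) exists. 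It gives the first equation of \eqref{interior_boundary_system_2d}. The second equation is exactly \(J_x(q_x)=0\) from \eqref{def_Jx}, after substituting \(q_x=\Phi(x,a(x))\).

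In the pinned case \(a(x)=-x\), the balancing condition \(J_x(q_x)=0\) with lower limit \(-x\), together with \(q_x=\Phi(x,b(x))\), is precisely \eqref{active_boundary_system_2d}.

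I expect the only delicate point to be the endpoint identities \(\Phi_x(a_x)=\Phi_x(b_x)=q_x\), specifically handling flat portions of \(\Phi_x\) and the boundary convention at \(-x\). Continuity of \(\Phi_x\) and the strict inequalities \(\underline q_x<q_x<\overline q_x\) should settle both. Everything else is direct substitution.
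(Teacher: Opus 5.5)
Your proposal is correct and follows the paper's route. The paper presents this proposition as the bivariate restatement of Theorem~\ref{th_fixed_x_solution}: it combines the balancing condition $J_x(q_x)=0$ with the endpoint identities $\Phi_x(b_x(q))=q$ and, when $a_x(q)>-x$, $\Phi_x(a_x(q))=q$, which it only asserts after \eqref{def_aq_fixed_x} and which you additionally verify from the continuity of $\Phi_x$ on $[-x,\infty)$ and $\underline q_x<q_x<\overline q_x$.
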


The expansion properties from the previous section stated in Propositions \ref{prop_diagonal_expansion}, \ref{prop_vertical_expansion} provide information about the boundaries' global monotonicity. In particular, \(d(x)=x+a(x)\) is non-increasing on \(\mathcal I\); whereas, under the additional log-concavity assumption \ref{ass_3}, Proposition \ref{prop_vertical_expansion} also gives the monotonicity of $b(\cdot)$. Thus, under \ref{ass_3}, the boundaries have one-sided limits at every point of \(\mathcal I\), and the only possible discontinuities are jumps. 

The next proposition shows that such jumps can occur only if the relevant branches of \(y\mapsto\Phi(x,y)\) contain flat portions.

\begin{Prop}\label{prop_boundary_continuity_no_flats}
    Let \(J\subset\mathcal I\) be an interval on which the boundary regime is fixed. Suppose that, for every \(x\in J\), the relevant side branches of \(y\mapsto\Phi(x,y)\) in the fixed-\(x\) construction are strictly monotone: the right branch, containing \(b(x)\), is strictly increasing, and, whenever \(a(x)>-x\), the left branch, containing \(a(x)\), is strictly increasing. 
    
    Then the boundary functions are continuous on \(J\). More explicitly, if \(a(x)>-x\) on \(J\), then both \(a(\cdot)\) and \(b(\cdot)\) are continuous on \(J\). If \(a(x)=-x\) on \(J\), then \(b(\cdot)\) is continuous on \(J\), while \(a(x)=-x\) is continuous automatically.
\end{Prop}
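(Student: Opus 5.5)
We characterize the boundaries through the balancing level and argue by compactness plus uniqueness of the fixed-$x$ solution. For $x\in J$ write $q(x)=Q(x)$ for the unique zero of $J_x$ from Lemma~\ref{lem_uniqueness}. Then $b(x)=b_x(q(x))$, and in the interior regime $a(x)=a_x(q(x))$. The argument has three steps: (a) the endpoint maps $(x,q)\mapsto a_x(q),b_x(q)$ are jointly continuous when the branches are strict; (b) the balancing functional $(x,q)\mapsto J_x(q)$ is jointly continuous; (c) $q(\cdot)$ is continuous by a subsequence argument. The boundary continuity then follows by composition.

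\textbf{Step (a).} Fix $x_0\in J$ and $q_0=q(x_0)$. The map $(x,y)\mapsto\Phi(x,y)$ is continuous on $\{y>-x\}$ by \ref{ass_1} and the explicit form \eqref{def_phi_2d}. On the right branch, $\Phi(x_0,\cdot)$ is strictly increasing through $b(x_0)$. Hence for small $\varepsilon>0$ we have
\[
\Phi(x_0,b(x_0)-\varepsilon)<q_0<\Phi(x_0,b(x_0)+\varepsilon).
\]
By joint continuity these strict inequalities persist for $(x,q)$ near $(x_0,q_0)$. We also need the valley point $r_x$ to stay below $b(x_0)-\varepsilon$ for such $x$, so that the first crossing of level $q$ to the right of $r_x$ lies in $(b(x_0)-\varepsilon,b(x_0)+\varepsilon)$. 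The left branch in the interior regime is treated the same way, with $\ell_x$ in place of $r_x$.

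This localization of the valley, i.e. showing that $\ell_x,r_x$ do not jump across the boundary points, is the step I expect to be the main obstacle. I would handle it by noting that on $(b(x_0)-\varepsilon,b(x_0)+\varepsilon)$ the function $\Phi(x,\cdot)$ crosses $q$ exactly once, upward. Any other crossing of level $q$ on the right branch would contradict the single-valley shape of $\Phi(x,\cdot)$. For this one should also use that $H(x,\cdot)<0$ somewhere in the valley near $\eta_{x_0}$, which persists in $x$ by continuity.

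\textbf{Step (b).} The integrand $(\Phi(x,z)-q)G'(z)$ is continuous. Combined with step (a), dominated convergence on compact intervals gives continuity of $(x,q)\mapsto J_x(q)$ near $(x_0,q_0)$. In the pinned regime the lower limit $-x$ also moves continuously. There I would use the boundary value \eqref{def_phi_boundary_2d} and the fact that $\Phi(x,z)$ stays bounded as $z\downarrow -x$, uniformly for $x$ near $x_0$, since $k'$ is monotone and $k'(0+)$ is finite.

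\textbf{Step (c).} Take $x_n\to x_0$ in $J$. The levels $q(x_n)$ lie in $(\underline q_{x_n},\overline q_{x_n})$, which are locally bounded by continuity of $\Phi$ on compacts. So any subsequence has a further subsequence with $q(x_n)\to\bar q$. By step (b), $J_{x_0}(\bar q)=\lim J_{x_n}(q(x_n))=0$. If $\bar q$ lies in the open interval $(\underline q_{x_0},\overline q_{x_0})$, uniqueness in Lemma~\ref{lem_uniqueness} gives $\bar q=q_0$.

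It remains to exclude $\bar q$ at an endpoint of that interval. The limits of $J_{x_0}$ at these endpoints are strictly nonzero, and I would show they are approached uniformly in $x$ via the same strict-sign areas across the valley used in the proof of Lemma~\ref{lem_uniqueness}. Hence $q(\cdot)$ is continuous at $x_0$. Composing with step (a) yields continuity of $b(\cdot)$, and of $a(\cdot)$ when $a(x)>-x$ on $J$. In the pinned case $a(x)=-x$ is continuous trivially.
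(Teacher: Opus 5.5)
Your route differs from the paper's. The paper never parametrizes by the level. It extracts convergent subsequences $a(x_n)\to a_*$, $b(x_n)\to b_*$ directly, using the a priori large-$y$ bound. It gets $a_*<b_*$ from the persistent strict valley together with Lemma~\ref{lem_decrease_points_in_continuation}, passes to the limit in the integral equations of Proposition~\ref{prop_free_boundary_equations}, and then invokes uniqueness of the balancing level and strictness of the branches.

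Reducing instead to continuity of the root of the strictly decreasing map $q\mapsto J_x(q)$ is a legitimate alternative, and your step (a) is sound. You do not even need to locate $r_x$. Since $\Phi(x_0,\cdot)<q_0$ on $(\eta_{x_0},b(x_0))$, three facts suffice: a uniform gap on $[\eta_{x_0}+\delta,b(x_0)-\varepsilon]$, the persistent strict decrease on $[\eta_{x_0}-\delta,\eta_{x_0}+\delta]$, and $\Phi(x_0,b(x_0)+\varepsilon)>q_0$ (strictness of the right branch). Together they pin $b_x(q)$ in $(b(x_0)-\varepsilon,b(x_0)+\varepsilon)$ for $(x,q)$ near $(x_0,q_0)$.

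The gap is in step (c). From $J_{x_n}(q(x_n))=0$ and $q(x_n)\to\bar q$ you conclude $J_{x_0}(\bar q)=0$ ``by step (b)''. However, steps (a)--(b) establish joint continuity only at $(x_0,q_0)$, and $\bar q$ is precisely the unknown.

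In the interior regime you can repair this by redoing (a) at every level of $(\underline q_{x_0},\overline q_{x_0})$, since both branches are assumed strictly increasing throughout. In the pinned regime this fails: the left branch is not assumed strictly monotone, so $(x,q)\mapsto a_x(q)$ can genuinely jump at a level $\bar q\neq q_0$ carried by a flat piece of $\Phi(x_0,\cdot)$. Continuity of $J$ there needs a separate argument, namely that the integrand $\Phi-q$ is small on such flat pieces.

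The cleanest fix uses the strict decrease of $J_x$ from Lemma~\ref{lem_uniqueness}, and it also removes your unproved ``uniform approach at the endpoints'' step:
\begin{itemize}
\item Choose a small $\theta>0$, avoiding the at most countably many levels of flat pieces of the left branch.
\item Note that $J_{x_0}(q_0-\theta)>0>J_{x_0}(q_0+\theta)$.
\item Use continuity of $J$ at the two points $(x_0,q_0\pm\theta)$ to get the same signs for $J_{x_n}$, which gives $|q(x_n)-q_0|<\theta$ for large $n$.
\end{itemize}
Continuity of $b(\cdot)$, and of $a(\cdot)$ in the interior regime, then follows from step (a) at $(x_0,q_0)$.
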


\begin{proof}
    Fix \(x_0\in J\), and let \(x_n\to x_0\). The endpoints \(a(x_n)\) and \(b(x_n)\) are locally bounded: the lower endpoints lie above \(-x_n\), while the upper endpoints are bounded locally by the a priori large-\(y\) stopping estimate, which can be chosen uniformly for \(x\) in a compact neighborhood of \(x_0\). Hence, after passing to a subsequence, we may assume
    \(
        a(x_n)\to a_*
    \)
    and 
    \(
        b(x_n)\to b_*.
    \)
    Moreover, we have $a_* < b_*$. Indeed, 
    since \(x_0\in\mathcal I\), \eqref{def_I_via_phi} and the
    continuity of \(D_y\Phi\) imply the existence of two real numbers \(-x_0<u<v\) and a neighborhood \(J_0\) of \(x_0\) such that
    \(
        D_y\Phi(x,y)<0
    \)
    for $x\in J_0,\, u\le y\le v.$
    Lemma \ref{lem_decrease_points_in_continuation} therefore gives
    \(
        [u,v]\subset\mathcal C_x,
        \, x\in J_0.
    \)
    Hence, for all sufficiently large \(n\),
    \(
        a(x_n)<u<v<b(x_n),
    \)
    and consequently
    \(
        a_*\le u<v\le b_*.
    \)

    Passing to the limit in the free-boundary equations of Proposition \ref{prop_free_boundary_equations}, using the continuity of \(\Phi\) and \(G\), and recalling that \(a_*<b_*\), shows that the limiting endpoints solve the same non-degenerate fixed-\(x_0\) balancing problem as the endpoints \(a(x_0),b(x_0)\).

    By the uniqueness of the balancing level in the fixed-\(x\) construction, the limiting smooth-fit level must be the level corresponding to \(x_0\). Since the relevant branches of \(y\mapsto\Phi(x_0,y)\) are strictly monotone, this level determines the endpoint on each branch uniquely. Therefore,
    \[
        a_*=a(x_0),\qquad b_*=b(x_0)
    \]
    in the interior regime, and \(b_*=b(x_0)\) in the diagonal-contact regime. Since every convergent subsequence has the same limit, the boundaries are continuous at \(x_0\).
\end{proof}

Under an additional non-degeneracy assumption on $\Phi$, we can improve continuity to continuous differentiability. Indeed, the integral equations of Proposition \ref{prop_free_boundary_equations} hold at every \(x\in\mathcal I\). To obtain regularity of $a(\cdot)$ and $b(\cdot)$, one can then apply the implicit function theorem. However, differentiating the integrals requires a local regularity condition, namely, non-degeneracy of the $y$-derivative of $\Phi$. We recall \eqref{def_H_2d}, \eqref{Phi_y_H_relation_2d}, and note that since \(G'>0\), the condition \(H(x,y)\neq0\) is precisely the requirement that the corresponding level curve of \(\Phi\) be nondegenerate in the \(y\)-direction.

\begin{Prop}[Differential Equations for the Free Boundaries]\label{prop_boundary_regular_odes}
    Let \(x_0\in\mathcal I\).
    
    \noindent \textbf{(i)} \quad Suppose first that \(a(x_0)>-x_0\) and
    \(
        H(x_0,a(x_0))H(x_0,b(x_0))\neq0.
    \)
    Then there exists a neighborhood \(J\) of \(x_0\) in \(\mathcal I\) such that \(a(\cdot)\) and \(b(\cdot)\) are \(C^1\) on \(J\). Moreover, with
    \begin{equation}\label{def_M_boundary}
        M(x; a, b)
        \coloneqq
        \frac{k'(x+b)-k'(x+a)}
             {G(b)-G(a)},
    \end{equation}
    we have, for \(x\in J\), the system of coupled ordinary differential equations
    \begin{equation}\label{ode_a_interior}
    \begin{split}
        a'(x)
        &=
        \frac{G'(a(x))}{2H(x,a(x))}
        \left(
            M\big(x; a(x), b(x)\big)-\frac{k''(x+a(x))}{G'(a(x))}
        \right),
        \\
        b'(x)
        &=
        \frac{G'(b(x))}{2H(x,b(x))}
        \left(
            M\big(x; a(x), b(x)\big)-\frac{k''(x+b(x))}{G'(b(x))}
        \right).       
    \end{split}
    \end{equation}

    \noindent \textbf{(ii)} \quad Suppose next that \(J\subset\mathcal I\) is an open interval on which \(a(x)=-x\), and that
    \(
        H(x,b(x))\neq0,\, x\in J.
    \)
    Then \(b(\cdot)\) is \(C^1\) on \(J\). Moreover, with
    \begin{equation}\label{def_M0_boundary}
        M_0(x; b)
        \coloneqq
        \frac{
            k'(x+b)-k'(0+)
            +
            \big(\Phi(x,-x)-\Phi(x,b)\big)G'(-x)
        }
        {G(b)-G(-x)},
    \end{equation}
    we have, for \(x\in J\), the differential equation
    \begin{equation}\label{ode_b_active}
        b'(x)
        =
        \frac{G'(b(x))}{2H(x,b(x))}
        \left(
            M_0(x; b(x))-\frac{k''(x+b(x))}{G'(b(x))}
        \right), \qquad x \in J
    \end{equation}
\end{Prop}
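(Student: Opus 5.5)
The plan is to apply the implicit function theorem to the balancing system of Proposition~\ref{prop_free_boundary_equations}, after adding the smooth-fit level as an extra unknown. Then I will differentiate the resulting identities in \(x\) to read off \eqref{ode_a_interior} and \eqref{ode_b_active}.

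\textbf{Case (i).} For \(x\) near \(x_0\), consider the map \(\mathcal F=(\mathcal F_1,\mathcal F_2,\mathcal F_3)\) in the unknowns \((a,b,q)\), given by
\[
\mathcal F_1=\Phi(x,a)-q,\qquad \mathcal F_2=\Phi(x,b)-q,\qquad \mathcal F_3=\int_a^b\big(\Phi(x,z)-q\big)\,dG(z).
\]
It is defined near \((x_0,a(x_0),b(x_0),Q(x_0))\). Since \(x_0+a(x_0)>0\) and \(k\in C^2((0,\infty))\), the function \(\Phi\) is \(C^1\) there, with
\[
D_x\Phi(x,y)=\frac{k''(x+y)}{G'(y)},\qquad D_y\Phi=\frac{2H}{G'},
\]
by \eqref{Phi_y_H_relation_2d}. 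Hence \(\mathcal F\) is \(C^1\).

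At the base point \(\Phi(x_0,a)=\Phi(x_0,b)=q\), so \(\partial_a\mathcal F_3=-(\Phi(x_0,a)-q)G'(a)=0\) and \(\partial_b\mathcal F_3=0\). The Jacobian with respect to \((a,b,q)\) is therefore lower triangular up to a permutation. Its determinant is
\[
-D_y\Phi(x_0,a)\,D_y\Phi(x_0,b)\,\big(G(b)-G(a)\big),
\]
which is nonzero by the hypothesis \(H(x_0,a(x_0))H(x_0,b(x_0))\neq0\) and the strict monotonicity of \(G\). The implicit function theorem then yields a \(C^1\) branch \((\tilde a,\tilde b,\tilde q)(x)\) near \(x_0\).

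\textbf{Identification step.} I expect the main obstacle to be showing that \((\tilde a,\tilde b,\tilde q)=(a,b,Q)\). I would argue in four steps.
\begin{enumerate}[label=(\alph*)]
\item Since \(a(x_0)\) lies on the non-decreasing left branch and \(b(x_0)\) on the right branch, nondegeneracy forces \(H>0\) at both points. By continuity, \(D_y\Phi>0\) in a fixed neighborhood of each of these points, uniformly for \(x\) near \(x_0\).
\item Repeat the subsequence argument of Proposition~\ref{prop_boundary_continuity_no_flats}. Any limit of \((a(x_n),b(x_n))\) solves the fixed-\(x_0\) balancing problem. By Lemma~\ref{lem_uniqueness} and the local strict monotonicity from (a), that limit is \((a(x_0),b(x_0))\). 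Hence \(a,b\) are continuous at \(x_0\), and \(a(x)>-x\) nearby.
\item Consequently \((a(x),b(x),Q(x))\) is a solution of \(\mathcal F=0\) lying in the neighborhood where the implicit-function solution is unique.
\item Therefore it coincides with \((\tilde a,\tilde b,\tilde q)\).
\end{enumerate}
As a check in the other direction, \(\tilde a,\tilde b\) lie on strictly increasing pieces of the left and right branches and satisfy the balancing equation, so uniqueness of \(q_x\) gives the same conclusion.

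\textbf{Computing the ODEs in case (i).} Differentiate \(\mathcal F_3(x,a(x),b(x),Q(x))=0\) in \(x\). The boundary terms vanish because \(\Phi=Q\) at both endpoints. Using
\[
\int_a^b D_x\Phi\,dG=\int_a^b k''(x+z)\,dz=k'(x+b)-k'(x+a),
\]
we obtain \(Q'(x)\big(G(b)-G(a)\big)=k'(x+b)-k'(x+a)\), that is, \(Q'=M(x;a,b)\). Next, differentiate \(\Phi(x,a(x))=Q(x)\) to get
\[
\frac{k''(x+a)}{G'(a)}+\frac{2H(x,a)}{G'(a)}\,a'=M.
\]
Solving for \(a'\) gives the first line of \eqref{ode_a_interior}. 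The equation for \(b'\) follows in the same way from \(\Phi(x,b(x))=Q(x)\).

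\textbf{Case (ii).} Here the unknowns are \((b,q)\), and the equations are \(\Phi(x,b)=q\) and
\[
\int_{-x}^{b}\big(\Phi(x,z)-q\big)\,dG(z)=0.
\]
The Jacobian determinant is \(-D_y\Phi(x,b)\big(G(b)-G(-x)\big)\neq0\).

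The only new point is regularity at the moving endpoint \(z=-x\). The function \(\Phi(x,-x)\) of \eqref{def_phi_boundary_2d} is explicitly \(C^1\) in \(x\). Moreover \(D_x\Phi(x,z)=k''(x+z)/G'(z)\) is integrable near \(z=-x\), because \(k'\) has the finite limit \(k'(0+)\). Hence
\[
\int_{-x}^b D_x\Phi\,dG=k'(x+b)-k'(0+).
\]
Differentiating the integral equation, the moving lower limit contributes \(\big(\Phi(x,-x)-q\big)G'(-x)\). This yields \(Q'=M_0(x;b)\). Differentiating \(\Phi(x,b(x))=Q(x)\) as in case (i) then gives \eqref{ode_b_active}. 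The identification of the implicit-function branch with \(b(\cdot)\) on \(J\) proceeds as in case (i), using continuity of \(b\) and Lemma~\ref{lem_uniqueness}.
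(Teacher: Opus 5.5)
Your proposal is correct and follows essentially the paper's route. You apply the implicit function theorem to the balancing system (carrying $q$ as an extra unknown only reproduces the paper's determinant $-D_y\Phi(x,a)\,D_y\Phi(x,b)\,(G(b)-G(a))$), identify the branch with $(a,b)$ through uniqueness of the balancing level in Lemma~\ref{lem_uniqueness}, and differentiate the integral identity to get $Q'=M$ (resp.\ $M_0$). The paper identifies the branch directly, by observing that it is itself an admissible fixed-$x$ construction; this is your ``check in the other direction,'' and it is the cleaner of your two arguments. Your primary continuity-based route would additionally have to rule out $a(x_n)=-x_n$ along some sequence $x_n\downarrow x_0$, because the subsequence argument of Proposition~\ref{prop_boundary_continuity_no_flats} is stated only for a fixed boundary regime.
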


\begin{proof}
    We consider first the interior regime. Define
    \[
        F_1(x; a,b)\coloneqq \Phi(x,a)-\Phi(x,b),
    \qquad
        F_2(x; a,b)
        \coloneqq
        \int_a^b\big(\Phi(x,z)-\Phi(x,a)\big)\,dG(z).
    \]
    The boundary equations \eqref{interior_boundary_system_2d} are
    \(F_1=F_2=0\). At a solution, the Jacobian with respect to \((a,b)\) is
    \[
        \det
        \begin{pmatrix}
            D_y \Phi(x,a) & -D_y \Phi(x,b)\\
            -D_y \Phi(x,a)(G(b)-G(a)) & 0
        \end{pmatrix}
        =
        -D_y \Phi(x,a)D_y \Phi(x,b)(G(b)-G(a)).
    \]
    Since \(G\) is strictly increasing and \(D_y \Phi=2H/G'\) as in \eqref{Phi_y_H_relation_2d}, this determinant is nonzero under the nondegeneracy assumption. The implicit function theorem then gives a local \(C^1\) branch \((\tilde a(x),\tilde b(x))\) solving the boundary equations near \((a(x_0),b(x_0))\).
    For \(x\) close to \(x_0\), this branch remains on the same side branches of \(y\mapsto\Phi(x,y)\) as the original endpoints. Hence, it is an admissible fixed-\(x\) construction, and the uniqueness of the balancing level in Lemma~\ref{lem_uniqueness} identifies it with the actual boundaries \((a(x),b(x))\).

    To derive the differential equations in \eqref{ode_a_interior}, we differentiate
    \(F_2(x; a(x),b(x))=0\). The boundary terms cancel because
    \(\Phi(x,a(x))=\Phi(x,b(x))\), thus
    \[
        \frac{d}{dx}\Phi(x,a(x))
        =
        \frac{\int_{a(x)}^{b(x)}D_x \Phi(x,z)\,dG(z)}
             {G(b(x))-G(a(x))}.
    \]
    Since
    \(
        D_x \Phi(x,z)=\dfrac{k''(x+z)}{G'(z)},
    \)
    we obtain
    \[
        \int_{a(x)}^{b(x)}D_x \Phi(x,z)\,dG(z)
        =
        \int_{a(x)}^{b(x)}k''(x+z)\,dz
        =
        k'(x+b(x))-k'(x+a(x))
    \]
    and
    \(
        \frac{d}{dx} \Phi(x,a(x))=M\big(x; a(x), b(x)\big).
    \)
    Differentiating \(\Phi(x,a(x))=\Phi(x,b(x))\) gives also
    \(
        \frac{d}{dx} \Phi(x,b(x))=M(x; a(x), b(x)).
    \)
    Finally,
    \[
        \frac{d}{dx} \Phi(x,a(x))
        =
        \frac{k''(x+a(x))}{G'(a(x))}
        +
        \frac{2H(x,a(x))}{G'(a(x))}a'(x),
    \]
    and solving for \(a'(x)\) gives the first equation of \eqref{ode_a_interior}. The proof of the second equation is identical.

    We consider now the active-boundary regime \(a(x)=-x\). Define
    \[
        F_0(x; b)
        \coloneqq
        \int_{-x}^{b}\big(\Phi(x,z)-\Phi(x,b)\big)\,dG(z).
    \]
    The equation \eqref{active_boundary_system_2d} is \(F_0(x,b(x))=0\). At a solution, the derivative
    \(
        D_b F_0(x,b)
        =
        -D_y \Phi(x,b)\big(G(b)-G(-x)\big)
    \)
    is nonzero under the stated nondegeneracy assumption. The implicit function theorem gives \(C^1\)-regularity of \(b(\cdot)\) on \(J\).
    Differentiating \(F_0(x,b(x))=0\), taking into account the moving lower endpoint \(-x\), yields
    \[
        \frac{d}{dx} \Phi(x,b(x))
        =
        \frac{
            k'(x+b(x))-k'(0+)
            +
            \big(\Phi(x,-x)-\Phi(x,b(x))\big)G'(-x)
        }
        {G(b(x))-G(-x)}
        =
        M_0\big(x; b(x)\big).
    \]
    Since
    \[
        \frac{d}{dx} \Phi(x,b(x))
        =
        \frac{k''(x+b(x))}{G'(b(x))}
        +
        \frac{2H(x,b(x))}{G'(b(x))}b'(x),
    \]
    solving for \(b'(x)\) gives the differential equation \eqref{ode_b_active}.
\end{proof}
\vspace{10pt}

\begin{remark}
\begingroup
\setlength{\parskip}{0pt}
    For the main examples of Remark \ref{remark_special_functions}, the additional assumptions of the preceding propositions hold. Direct computations for power costs \(k(s)=\alpha s^\beta+\gamma\), \(\beta\ge1\), and exponential costs \(k(s)=\alpha (e^{\lambda s}-\gamma), \lambda > 0\), show that the side branches of \(y\mapsto\Phi(x,y)\) are strictly monotone whenever the continuation section is nonempty. Moreover, we have \(H(x,a(x))>0\) at every interior lower boundary, and \(H(x,b(x))>0\) at every upper boundary. 

    Thus, for these examples, the free boundaries are \(C^1\) on each regime: before diagonal contact, both \(a\) and \(b\) are \(C^1\); after diagonal contact, \(a(x)=-x\) and \(b\) is \(C^1\). The only possible loss of global \(C^1\)-regularity occurs at the transition point where the lower boundary first hits the diagonal \(y=-x\) \emph{and}, if the birth point belongs to \(\mathcal I\), at the birth of the continuation interval.
\endgroup
\end{remark}

\subsection{The lower boundary and the diagonal}
\label{subsec:lower_boundary_diagonal}

We study now the asymptotic behavior of the lower boundary $a(\cdot)$. We have already shown that this boundary is continuous and differentiable under the mild regularity assumptions of the previous section. Moreover, the diagonal expansion property of Proposition \ref{prop_diagonal_expansion} shows that $a(\cdot)$ tends to the diagonal $y = -x + \alpha$ for some $\alpha \ge 0$; and, if it ever touches the diagonal, remains glued to it for all larger values of \(x\). Therefore, two natural questions remain. Does the lower boundary ever hit the diagonal? If it does not, how far above the diagonal does it stay asymptotically?

It is useful to express the vertical sections in the terminal-coordinate variable
\(
    s=x+y,
\)
which is the argument of the terminal cost. For fixed \(x\), the state space \(y\ge -x\) becomes \(s\ge0\), and the diagonal \(y=-x\) becomes the point \(s=0\). Moreover, the vertical section
\(
    \mathcal C_x=(a(x),b(x))
\)
of the continuation region $\mathcal C$
is transformed into
\(
    (x+a(x),\,x+b(x)).
\)
Thus,
\begin{equation}\label{def_d_boundary}
    d(x)=x+a(x)
\end{equation}
is the lower endpoint of the continuation section in terminal-coordinate variables. Proposition \ref{prop_diagonal_expansion} implies that this function is non-increasing. In particular, \(d(x)=0\) if and only if the lower boundary touches the diagonal. It remains to analyze $\lim_{x \to \infty} d(x)$.

Our main results below are based on the following idea. For fixed \(s\) and large \(x\), the corresponding displacement is \(y=s-x\), and the drift \(G(s-x)\) is close to \(-1\). Thus, near the diagonal and for large \(x\), the terminal-coordinate process is pushed strongly toward zero. The local comparison is then between the possible decrease of the terminal cost, governed by \(k'(s)\), and the running cost \(c\). If \(k'(s)\le c\), the possible saving from moving left is no larger than the running cost, and stopping is favorable. If \(k'(s)>c\), continuation is locally attractive, with the caveat that this can be spoiled by rare upward excursions before the process reaches the lower level. We make now this idea rigorous.

Our first statement makes the preceding ``small derivative implies stopping'' intuition precise by comparing the running cost $c$ with the best possible local improvement $k'(s) G(-x)$.

\begin{Lem}\label{lem_slope_stop}
    Fix \(x\ge0\) and \(s = x + y>0\). If
    \begin{equation}\label{slope_stop_condition}
        k'(s)G(-x)+c\ge0,
    \end{equation}
    then immediate stopping is optimal at the point \((x,s-x)\):
    \(
        U(x,s-x)=k(s).
    \)
    At \(s=0\), immediate stopping is optimal as well.
\end{Lem}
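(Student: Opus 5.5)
The plan is a direct one-step comparison. I will bound the expected cost of an arbitrary admissible stopping time from below by a first-order (tangent-line) expansion of \(k\) at \(s\), and then control the expected displacement through the drift representation \eqref{sde_Y_arbitrary}, using that the drift \(G\) is increasing and never below \(G(-x)\) on the state space.

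Fix \(x\ge0\), \(s>0\), \(y=s-x\), and a stopping time \(\tau\in\T^Y\).
\begin{itemize}
\item We may assume \(\ex[\tau]<\infty\), since otherwise the cost is infinite.
\item As in the proof of Proposition~\ref{prop_stopping_for_large_y}, we may also assume \(\tau\le\tau_{-x}\), since this truncation cannot increase the cost. Consequently \(Y_y(t)\ge -x\) for \(t\le\tau\).
\item Because \(k\) is even and convex, \(k'\ge0\) on \((0,\infty)\). Because \(k\in C^2((0,\infty))\), convexity gives the supporting-line inequality \(k(z)\ge k(s)+k'(s)(z-s)\) for all \(z\in\mathbb R\).
\end{itemize}
Applying this inequality with \(z=x+Y_y(\tau)\) and taking expectations yields
\[
\ex\big[k(x+Y_y(\tau))\big]\ge k(s)+k'(s)\,\ex\big[Y_y(\tau)-y\big].
\]

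Next I evaluate the expected displacement. From \eqref{sde_Y_arbitrary},
\[
Y_y(\tau)-y=\int_0^\tau G(Y_y(t))\,dt+N(\tau).
\]
Since \(\ex[\tau]<\infty\), Wald's identity gives \(\ex[N(\tau)]=0\); equivalently, \(N(\cdot\wedge\tau)\) is a martingale bounded in \(\mathbb L^2\), as in the proof of Proposition~\ref{prop_verification}. Moreover, \(G\) is increasing and \(Y_y(t)\ge -x\) on \([0,\tau]\), so \(G(Y_y(t))\ge G(-x)\) there. Hence
\[
\ex\big[Y_y(\tau)-y\big]\ge G(-x)\,\ex[\tau].
\]
Multiplying by \(k'(s)\ge0\) and adding \(c\,\ex[\tau]\) gives
\[
\ex\big[k(x+Y_y(\tau))+c\tau\big]-k(s)\ge \big(k'(s)G(-x)+c\big)\,\ex[\tau]\ge0
\]
by \eqref{slope_stop_condition}. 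Thus no stopping time improves on immediate stopping, and \(U(x,s-x)=k(s)\).

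For the case \(s=0\), i.e. \(y=-x\), the reduction to \(\tau\le\tau_{-x}\) forces \(\tau=0\). Alternatively, \(k(x+Y_y(\tau))\ge k(0)\) because \(k\) is minimized at \(0\), while \(c\tau\ge0\), so immediate stopping is optimal there as well.

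The only delicate points are the sign requirement \(k'(s)\ge0\), which is what allows the lower bound \(G(Y_y)\ge G(-x)\) to be multiplied through, and the justification that \(\ex[N(\tau)]=0\). Both are handled by the reductions above: the first by the evenness and convexity of \(k\), and the second by the restriction to stopping times with finite expectation.
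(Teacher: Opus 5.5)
Your proof is correct and follows essentially the same route as the paper: you truncate at \(\tau_{-x}\), apply the tangent-line inequality for \(k\) at \(s\), use Wald's identity to remove \(N(\tau)\), and bound the drift below by \(G(-x)\) via \(k'(s)\ge0\). For \(s=0\), both you and the paper use the fact that \(k\) is minimized at the origin.
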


\begin{proof}
    The case \(s=0\) is immediate, since \(k\) is minimized at zero and the running cost is nonnegative.

    Thus, let \(s>0\), and let \(\tau\) be any admissible stopping time with finite expectation. As before, replacing \(\tau\) by its minimum with the first hitting time of \(0\) by the process \(x+Y_{s-x}(\cdot)\) cannot increase the cost. Thus, we may assume that
    \begin{equation}\label{def_Z_process}
        Z(t)\coloneqq x+Y_{s-x}(t)\ge0,\qquad 0\le t\le\tau .
    \end{equation}
    Since \(Y_{s-x}(t)\ge -x\) on this interval and \(G\) is increasing,
    \(
        G(Y_{s-x}(t))\ge G(-x),\, 0\le t\le\tau .
    \)
    By convexity,
    \(
        k(r)\ge k(s)+k'(s)(r-s),\, r\ge0.
    \)
    Therefore,
    \[
        \begin{split}
        \ex\big[k(Z(\tau))+c\tau\big]
        &\ge
        k(s)+k'(s)\ex[Z(\tau)-s]+c\,\ex[\tau] \\
        &=
        k(s)
        +k'(s)\ex\left[\int_0^\tau G(Y_{s-x}(u))\,du\right]
        +c\,\ex[\tau] \\
        &\ge
        k(s)+\big(k'(s)G(-x)+c\big)\ex[\tau]
        \ge k(s).
        \end{split}
    \]
    Here, to obtain the equality on the second line, we used Wald's identity $\ex[N(\tau)] = 0$ for a standard Brownian motion $N(\cdot)$ thanks to $\ex[\tau] < \infty$. As a result, no stopping time $\tau$ can improve immediate stopping, which attains the above lower bound \(k(s)\), and the result follows.
\end{proof}

Lemma \ref{lem_slope_stop} immediately rules out diagonal contact when the slope of \(k(\cdot)\) at the
origin is not larger than the running cost.

\begin{Prop}\label{prop_no_contact_m_le_c}
    If \(k'(0+)\le c\), then we have
    \(
        a(x)>-x
    \)
    for all $x\in\mathcal I.$
\end{Prop}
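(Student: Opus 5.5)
We argue by contradiction: assume $k'(0+)\le c$, and suppose that for some $x\in\mathcal I$ we have $a(x)=-x$. Then the continuation section is $\mathcal C_x=(-x,b(x))$ with $b(x)>-x$. So every point $(x,y)$ with $y$ slightly above $-x$ lies in $\mathcal C$. In terminal coordinates $s=x+y$, this means $U(x,s-x)<k(s)$ for all sufficiently small $s>0$. We will show that Lemma~\ref{lem_slope_stop} forbids this.

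For Lemma~\ref{lem_slope_stop} to apply at $(x,s-x)$, we need
\[
    k'(s)G(-x)+c\ge0 .
\]
We split according to the sign of $G(-x)$.

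If $G(-x)\ge0$ (that is, $-x\ge y_0$), the condition holds for every $s>0$, because $k'(s)\ge0$ by convexity and evenness, and $c>0$. Then stopping is optimal everywhere on the section, so $\mathcal C_x=\varnothing$. This contradicts $x\in\mathcal I$.

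If $G(-x)<0$, we use $|G|<1$ from \eqref{G_properties}, which gives $G(-x)>-1$. Hence
\[
    k'(s)G(-x)+c> c-k'(s) \qquad\text{whenever } k'(s)>0 .
\]
If $k'(s)=0$ the condition holds trivially. Since $k'(s)\downarrow k'(0+)\le c$ as $s\downarrow0$, we get $k'(s)G(-x)+c\ge0$ for all small $s$. Even $k'(s)$ slightly above $c$ is fine here, because the factor $|G(-x)|<1$ gives a strict margin. More precisely, for $s$ small we have
\[
    k'(s)|G(-x)|\le \bigl(c+\varepsilon\bigr)|G(-x)|\le c
\]
once we choose $\varepsilon\le c(1-|G(-x)|)/|G(-x)|$. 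Right-continuity of $k'$ at $0+$ then provides $\delta>0$ with $k'(s)\le c+\varepsilon$ on $(0,\delta)$. Lemma~\ref{lem_slope_stop} now gives $U(x,s-x)=k(s)$ for all $s\in(0,\delta)$, so $(-x,-x+\delta)\cap\mathcal C_x=\varnothing$. This contradicts $\mathcal C_x=(-x,b(x))$.

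The only delicate point is the passage from $k'(0+)\le c$ to the inequality at small positive $s$. Convexity of $k$ on $(0,\infty)$ makes $k'$ non-decreasing there, so $k'(s)$ may exceed $c$ for every $s>0$ when $k'(0+)=c$. The strict bound $|G(-x)|<1$, with $x$ fixed, absorbs this, as above. This settles the contradiction, so $a(x)>-x$ for all $x\in\mathcal I$.
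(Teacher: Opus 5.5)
Your proof is correct and follows essentially the same route as the paper. You first rule out $G(-x)\ge0$ via Lemma~\ref{lem_slope_stop}, then use the strict bound $G(-x)>-1$ to get $k'(s)G(-x)+c\ge0$ for all small $s>0$, even when $k'(0+)=c$, so that points arbitrarily close to the diagonal are stopping points. The differences are cosmetic: you argue by contradiction and choose the margin $\varepsilon$ explicitly, whereas the paper argues directly via $c/(-G(-x))>c\ge k'(0+)$ and then invokes connectedness of the section $\mathcal C_x$.
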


\begin{proof}
    Fix \(x\in\mathcal I\). We show that the continuation section
    \(\mathcal C_x\) cannot start at the diagonal.

    First, we note that $x \in \mathcal{I}$ implies \(G(-x) < 0\). Indeed, if \(G(-x)\ge0\), then \eqref{slope_stop_condition} holds for every \(s\ge0\), since \(k'\ge0\) on \((0,\infty)\). Lemma
    \ref{lem_slope_stop} therefore gives
    \(
        U(x,s-x)=k(s),\ s\ge0.
    \)
    Hence \(\mathcal C_x=\varnothing\), contradicting \(x\in\mathcal I\).

    Thus, we assume $G(-x) < 0$. In this case, since $G > -1$, we have 
    \[
        \frac{c}{-G(-x)}>c\ge k'(0+).
    \]
    Since \(k'(s)\to k'(0+)\) as \(s\downarrow0\), there exists
    \(\varepsilon>0\) such that
    \(
        k'(s)G(-x)+c\ge0,\, 0<s<\varepsilon.
    \)
    Lemma \ref{lem_slope_stop} then gives
    \[
        U(x,s-x)=k(s),\qquad 0<s<\varepsilon.
    \]
    Thus, points arbitrarily close to the diagonal belong to the stopping region, and the conclusion follows readily by using the fact that the continuation region is a connected interval.
\end{proof}

The converse direction is subtler. If \(k'(0+)>c\), then near the diagonal the first-order gain from reaching the minimum of \(k(\cdot)\) dominates the running cost. However, before hitting the diagonal, the terminal-coordinate process may make a rare upward excursion. The proof below controls this possibility by stopping the process at the first exit from \((0,R)\). The probability of hitting the upper level \(R\) before zero is of order \(e^{-2R}\), so the possible cost of this upward excursion is negligible under the subcritical growth condition
\begin{equation}\label{subcritical_growth_k}
    k(R)=o(e^{2R}),\qquad R\to\infty.
\end{equation}

\begin{Prop}\label{th_lower_boundary_dichotomy}
    Assume that \(\mathcal I\neq\varnothing\). 
    If \(k'(0+)>c\) and the growth condition \eqref{subcritical_growth_k} holds, then there exists
    \(x_0\in\mathcal I\) such that the diagonal contact condition $a(x) = -x$ holds for all $x\in\mathcal I,\, x\ge x_0.$
\end{Prop}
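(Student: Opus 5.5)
The plan is to exhibit, for every sufficiently large $x$, a non-trivial stopping rule that strictly beats immediate stopping at \emph{all} points $(x,s-x)$ with $s\in(0,1]$. Since $\mathcal C_x$ is an interval $(a(x),b(x))$ with $a(x)\ge -x$, this forces $a(x)=-x$. The sticky-contact part of Proposition~\ref{prop_diagonal_expansion} is consistent with this but is not needed, because the argument works directly for every $x\ge x_0$. Such $x$ automatically lie in $\mathcal I$, since $\mathcal C_x\neq\varnothing$.

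Throughout, work in the terminal coordinate $Z(t)=x+Y_{s-x}(t)$, so that $Z(0)=s$ and $dZ=G(Z-x)\,dt+dN$. The first observation is that, $k$ being convex, $k'(r)\ge k'(0+)>c$ for all $r>0$, and hence
\[
k(s)\ge k(0)+k'(0+)\,s .
\]
Fix $\eta>0$ with $k'(0+)>c+3\eta$.

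Fix $R>1$, to be chosen below, and let $\tau$ be the first exit time of $Z$ from $(0,R)$. On $[0,\tau]$ the drift satisfies
\[
G(Z-x)\le G(R-x)=:-\mu_x ,
\]
and $\mu_x\uparrow 1$ as $x\to\infty$ with $R$ fixed. The cost is then bounded as follows.
\begin{enumerate}[label=(\alph*)]
\item \emph{Expected time.} By optional sampling, $0\le \ex[Z(\tau)]=s+\ex\int_0^\tau G(Z-x)\,du\le s-\mu_x\,\ex[\tau]$, which gives $\ex[\tau]\le s/\mu_x$. Finiteness of $\ex[\tau]$ holds because this is an exit time from a bounded interval.
\item \emph{Probability of hitting $R$.} By the comparison theorem already used in Proposition~\ref{prop_diagonal_expansion}, $Z$ is dominated by the Brownian motion with drift $-\mu_x$ started at $s$. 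Therefore $\pr(Z(\tau)=R)$ is at most the corresponding probability for that drifted Brownian motion, namely
\[
\frac{e^{2\mu_x s}-1}{e^{2\mu_x R}-1}\le \frac{2\mu_x s\,e^{2\mu_x s}}{e^{2\mu_x R}-1}\le C\,s\,e^{-2\mu_x R},
\]
where $C$ is an absolute constant, using $s\le 1$.
\end{enumerate}
Combining these, the cost of $\tau$ satisfies
\[
\ex[k(Z(\tau))+c\tau]\le k(0)+\big(k(R)-k(0)\big)\,C\,s\,e^{-2\mu_x R}+\frac{c\,s}{\mu_x}.
\]

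It then remains to choose parameters in the correct order, and I expect this ordering to be the main point. A fixed $\mu<1$ would not suffice, because $k(R)=o(e^{2R})$ does not imply $k(R)e^{-2\mu R}\to0$. Instead, first use \eqref{subcritical_growth_k} to choose $R$ so large that $C\,k(R)e^{-2R}<\eta$. With $R$ now fixed, take $x_0$ so large that, for $x\ge x_0$, one has $C\,k(R)e^{-2\mu_xR}<2\eta$ and $c/\mu_x<c+\eta$; both are possible since $\mu_x\to1$. The cost bound then becomes
\[
\ex[k(Z(\tau))+c\tau]< k(0)+s(c+3\eta)< k(0)+k'(0+)\,s\le k(s),
\]
uniformly in $s\in(0,1]$.

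Consequently $(x,s-x)\in\mathcal C$ for all $s\in(0,1]$ and all $x\ge x_0$. By the interval structure of $\mathcal C_x$, this gives $a(x)=-x$ for all $x\ge x_0$, which is the claim.
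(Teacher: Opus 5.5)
Your proof is correct, and it takes a different route from the paper's. You use the same exit strategy from $(0,R)$ in the terminal coordinate. The paper, however, computes the cost $J_R^x(s)$ exactly, using the scale function $G(\cdot-x)$ and the explicit solution $\eta\,G(s-x)+\zeta-(s-x)G(s-x)$ of the expected-exit-time ODE. It then shows that the right derivative $(J_R^x)'(0+)$ converges, as $x\to\infty$, to
\[
c\bigl(1-\tfrac{2R}{e^{2R}-1}\bigr)+\tfrac{2(k(R)-k(0))}{e^{2R}-1},
\]
which lies below $k'(0+)$ once $R$ is large. That only gives $J_R^x<k$ on an $x$-dependent interval $(0,\varepsilon_x)$.

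You replace the exact formulas with two one-sided estimates. Both rest only on $G(Z-x)\le G(R-x)=-\mu_x$ before exit:
\begin{itemize}
\item Wald's identity gives $\ex[\tau]\le s/\mu_x$;
\item domination by a Brownian motion with drift $-\mu_x$ bounds the right-exit probability by $C\,s\,e^{-2\mu_x R}$.
\end{itemize}
The result is a cost bound that is linear in $s$ and uniform on $(0,1]$. So you obtain continuation on the fixed band $(-x,1-x]$ for every $x\ge x_0$, with no differentiation at $s=0$ and no appeal to stickiness, and you get $\mathcal I\neq\varnothing$ as a by-product. Your ordering of parameters is correct and is the same as the paper's: fix $R$ via $k(R)=o(e^{2R})$ first, then send $\mu_x\to1$. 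What the paper's exact computation buys in exchange is the precise limiting slope of the exit rule. It also sets up the explicit formulas that are reused for the asymptote $\rho$ in Corollary~\ref{cor_lower_boundary_asymptote}.

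Two small points to tidy up.
\begin{itemize}
\item The domination $Z(t)\le s-\mu_x t+N(t)$ on $[0,\tau]$ is not an instance of the global comparison theorem, because $G(z-x)\le-\mu_x$ holds only for $z\le R$. It does follow trivially pathwise from $Z(t)-\bigl(s-\mu_x t+N(t)\bigr)=\int_0^t\bigl(G(Z(u)-x)+\mu_x\bigr)\,du\le 0$ for $t\le\tau$. On $\{Z(\tau)=R\}$ the drifted Brownian motion then stays positive and reaches $R$ by time $\tau$, which gives the event inclusion you need.
\item Your constant $C$ is absolute only once $\mu_x R$ is bounded below. Add, for example, $\mu_x\ge\tfrac12$ to the requirements on $x_0$. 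Since $R>1$, this yields $e^{2\mu_x R}-1\ge(1-e^{-1})e^{2\mu_x R}$, so $C$ can still be fixed before $R$ is chosen.
\end{itemize}
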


\begin{proof}
    As suggested by the preceding discussion, we shall show that, for \(x\) large and for terminal coordinates \(s=x+y\) close to zero, it is strictly better to wait until the terminal-coordinate process exits a fixed interval \((0,R)\) than to stop immediately. The level \(R\) will first be chosen large enough to make the rare upward excursion negligible; then \(x\) will be chosen large enough so that, on \([0,R]\), the drift \(G(s-x)\) is close to \(-1\).

    \underline{Step 1:} It is convenient to work in the terminal
    coordinate \(s=x+y\). For fixed \(x\ge0\) and \(s\ge0\), define
    \(
        Z_s^x(t)
        \coloneqq
        x+Y_{s-x}(t),
        \, t\ge0.
    \)
    Then \(Z_s^x(0)=s\), and
    \begin{equation}\label{sde_Z_terminal_coordinate}
        Z_s^x(t)
        =
        s+\int_0^t G\big(Z_s^x(u)-x\big)\,du+N(t).
    \end{equation}
    The point \(Z_s^x=0\) corresponds to the diagonal \(y=-x\).
    
    Fix \(R>0\), and consider the cost 
    \(
        J^x_R(s) \coloneqq \ex\big[k\big( Z^x_s(\tau_R^{x, s})\big) + c \, \tau_R^{x, s}\big]
    \)
    of stopping at the time
    \(
        \tau_R^{x,s}
        \coloneqq
        \inf\{t\ge0:Z_s^x(t)\notin (0,R)\},
        \, 0<s<R.
    \)
    We shall compare this \(J_R^x(s)\) with the cost \(k(s)\) of immediate stopping, for small \(s\). To do so, we shall find the right derivative of \(J_R^x\) at zero and show that, for large enough $R$ and $x$, it is strictly smaller than \(k'(0+)\), which will imply that $k(\cdot)$ dominates \(J_R^x\) at the neighborhood of the origin.

    \underline{Step 2:} We claim that
    \begin{equation}\label{derivative_limit_J}
        (J_R^x)'(0+)
        \longrightarrow
        c\left(1-\frac{2R}{e^{2R}-1}\right)
        +
        \frac{2\bigl(k(R)-k(0)\bigr)}{e^{2R}-1},
        \qquad x\to\infty.
    \end{equation}
    Assume first that this has been established. Note that, by the subcritical growth assumption \(k(R)=o(e^{2R})\) in \eqref{subcritical_growth_k}, the right-hand side converges to \(c\) as \(R\to\infty\). Therefore, since \(k'(0+)>c\), we can choose \(R\) so large that, for this fixed \(R\) and all sufficiently large \(x\), we have
    \(
        (J_R^x)'(0+)<k'(0+).
    \)
    Since \(J_R^x(0)=k(0)\), it follows that, for each such \(x\),
    \[
        \lim_{s\downarrow0}
        \frac{k(s)-J_R^x(s)}{s}
        =
        k'(0+)-(J_R^x)'(0+)
        >0.
    \]
    Consequently, for each sufficiently large \(x\), there exists \(\varepsilon_x>0\) such that
    \(
        J_R^x(s)<k(s),
        \, 0<s<\varepsilon_x.
    \)
    Thus, for every sufficiently large \(x\), every terminal coordinate \(s\in(0,\varepsilon_x)\) belongs to the continuation region. Equivalently, the continuation section \(\mathcal C_x\) contains points arbitrarily close to the diagonal, and hence
    \(
        a(x)=-x.
    \)
    By the stickiness of diagonal contact from Proposition \ref{prop_diagonal_expansion}, this remains true for all larger \(x\in\mathcal I\). It is therefore sufficient to establish \eqref{derivative_limit_J} in order to conclude the proof.

    \underline{Step 3:}
    Let
    \(
        p_R^x(s)
        \coloneqq
        \pr\big(Z_s^x(\tau_R^{x,s})=R\big),
        \,
        u_R^x(s)
        \coloneqq
        \ex[\tau_R^{x,s}],
    \)
    so that
    \[
        J_R^x(s)
        =
        k(0)\big(1-p_R^x(s)\big)+k(R)\,p_R^x(s)+c\,u_R^x(s),
    \]
    and we shall find analytic formulas for $p_R^x(\cdot)$ and $u_R^x(\cdot)$ starting with the former.
    
    As we have already discussed in the proof of Proposition \ref{prop_stopping_for_large_y}, the scale function of the process \(Z_s^x\) is given by \(\zeta\mapsto G(\zeta-x)\). Therefore, the probability of exiting from the interval $(0, R)$ through the right end-point is given by (see, e.g., \cite[Proposition 3.2 on p. 301]{RevuzYor})
    \[
        p_R^x(s)
        =
        \frac{G(s-x)-G(-x)}{G(R-x)-G(-x)};
    \]
    hence, as simple algebra shows, 
    \begin{equation}\label{p_derivative_limit}
        (p_R^x)'(0+)
        =
        \frac{G'(-x)}{G(R-x)-G(-x)}
        \longrightarrow
        \frac{2}{e^{2R}-1},
        \qquad x\to\infty.
    \end{equation}

    For \(u_R^x(\cdot)\), it is standard that this function solves the boundary
    value problem (see, e.g., \cite[Section 5.5.C on pp. 342--344]{BMSC})
    \[
        \frac12 u''(s)+G(s-x)u'(s)=-1,\qquad
        u(0)=u(R)=0.
    \]
    This equation has the same form as the continuation ODE \eqref{continuation_ode_fixed_x} in
    Section~\ref{sec:one_d_problem}, after the change of variables \(y=s-x\).
    Thus, its general solution is
    \[
        u^x(s)=\eta\,G(s-x)+\zeta-(s-x)G(s-x),
    \]
    for constants \(\eta,\zeta\). Imposing \(u^x(0)=u^x(R)=0\) gives
    \[
        \eta
        =
        \frac{(R-x)G(R-x)+xG(-x)}
             {G(R-x)-G(-x)}.
    \]
    Therefore,
    \(
        (u_R^x)'(s)
        =
        \eta G'(s-x)-G(s-x)-(s-x)G'(s-x),
    \)
    and hence
    \begin{equation}\label{u_derivative_explicit}
        (u_R^x)'(0+)
        =
        \frac{R\,G(R-x)G'(-x)}
             {G(R-x)-G(-x)}
        -
        G(-x).
    \end{equation}
    Now simple algebra gives
    \[
        \frac{G'(-x)}{G(R-x)-G(-x)}
        \longrightarrow
        \frac{2}{e^{2R}-1},
        \qquad x\to\infty,
    \]
    and together with \(G(R-x)\to-1\) and \(G(-x)\to-1\), this yields
    \begin{equation}\label{u_derivative_limit}
        (u_R^x)'(0+)
        \longrightarrow
        1-\frac{2R}{e^{2R}-1},
        \qquad x\to\infty.
    \end{equation}
    Combining \eqref{p_derivative_limit} and \eqref{u_derivative_limit}, we obtain
    \eqref{derivative_limit_J} and finish the proof.
\end{proof}

\begin{remark}\label{remark_required_extra_condition}
    It is not hard to see that the condition \(k'(0+)>c\) alone cannot imply diagonal contact. Indeed, consider exponential costs \(k(s)=\alpha(e^{\lambda s}-\gamma)\) and arbitrary running cost $c > 0$. One has \(k'(0+)=\alpha\lambda\), so it is possible that \(k'(0+)>c\) for large enough $\alpha$. However, if \(\lambda\ge2\), then \(\mathcal I=\varnothing\) by Proposition \ref{prop_birth_sections}.
    Thus, no continuation section exists; in particular, there is no lower boundary which could reach the diagonal. This explains why the converse direction in Proposition \ref{th_lower_boundary_dichotomy}     includes an additional subcritical-growth assumption.
\end{remark}

Finally, it remains to identify the asymptote of the lower boundary in the non-contact case. To do so, we can apply the same intuition as before. For large \(x\), the terminal-coordinate drift is close to \(-1\) on bounded \(s\)-intervals, so the local comparison is essentially between \(k'(s)\) and \(c\). Thus, the critical (asymptotic) level is where the derivative of \(k\) crosses the running cost level $c$. Since \(k\) is convex, \(k'\) is non-decreasing on \((0,\infty)\). We define
\begin{equation}\label{def_rho_asymptote}
    \rho
    \coloneqq
    \sup\{r>0:k'(r)\le c\}
\end{equation}
with the convention that \(\rho=0\) if the set is empty, and note that, when finite, \(\rho\) is the largest minimizer of the convex function \(r\mapsto k(r)-cr\). If \(\rho=\infty\), then \(k'(r)\le c\) for every \(r>0\), and Lemma \ref{lem_slope_stop} implies that immediate stopping is optimal everywhere, i.e., $\mathcal{I} = \emptyset$. Hence, in the only non-trivial case \(\mathcal I \neq \varnothing\) one necessarily has \(\rho<\infty\).

The next corollary has two parts. The lower bound \(d(x)\ge\rho\) uses only the stopping criterion above. The reverse bound uses the same exit argument as in the proof of Proposition \ref{th_lower_boundary_dichotomy}, now applied to the interval \((\rho,R)\).

\begin{Cor}\label{cor_lower_boundary_asymptote}
    Assume that \(\mathcal I\neq\varnothing\) and that \(a(x)>-x\) for every
    \(x\in\mathcal I\). Then
    \(
        d(x)\ge \rho
    \)
    for all $ x\in\mathcal I.$
    If, in addition, \eqref{subcritical_growth_k} holds, then
    \[
        \lim_{x\to\infty} d(x)=\rho, \qquad \text{thus} \qquad a(x)=-x+\rho+o(1) \qquad \text{as} \ x\to\infty.
    \]
\end{Cor}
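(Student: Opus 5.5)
Two bounds are needed: the lower bound \(d(x)\ge\rho\), which follows from Lemma~\ref{lem_slope_stop}, and the matching upper bound \(\limsup d(x)\le\rho\), which comes from the exit-time computation of Proposition~\ref{th_lower_boundary_dichotomy} applied on an interval starting at \(\rho\) instead of \(0\).

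\emph{Lower bound.} Fix \(x\in\mathcal I\) and \(0<s<\rho\). Then \(k'(s)\le c\), and hence
\[
k'(s)G(-x)+c\ \ge\ -k'(s)+c\ \ge\ 0 ,
\]
using \(G(-x)>-1\) and \(k'(s)\ge0\). Lemma~\ref{lem_slope_stop} therefore gives \(U(x,s-x)=k(s)\), so the terminal coordinates \((0,\rho)\) all lie in the stopping set of the section. Since \(\mathcal C_x\) is a single interval and \(a(x)>-x\) by hypothesis, the interval \((0,\rho)\) cannot sit strictly inside \((d(x),x+b(x))\). The upper endpoint \(x+b(x)\) exceeds any point where \(H<0\), and by the argument of Proposition~\ref{prop_no_contact_m_le_c} such points satisfy \(k'(s)>c/(-G(-x))\ge c\), i.e. \(s>\rho\). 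Hence the whole continuation section lies to the right of \(\rho\), giving \(d(x)\ge\rho\).

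\emph{Upper bound.} Since \(d\) is non-increasing (Proposition~\ref{prop_diagonal_expansion}) and bounded below by \(\rho\), the limit \(d_\infty\coloneqq\lim_{x\to\infty}d(x)\ge\rho\) exists. Suppose, for contradiction, that \(d_\infty>\rho\), and fix \(s_0\in(\rho,d_\infty)\). Convexity gives \(k'(s)\ge k'(s_0)>c\) for \(s\ge s_0\), and the points \(s\in(\rho,s_0]\) lie in the stopping set for every \(x\). For \(R>s_0\), stop the terminal-coordinate process \(Z^x_s\) of \eqref{sde_Z_terminal_coordinate} at its first exit from \((s_0,R)\) and call the resulting cost \(J^x_{R}(s)\). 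We compare \(J^x_{R}\) with \(k\) near \(s_0\).

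The computation of Step~3 of Proposition~\ref{th_lower_boundary_dichotomy}, shifted by \(s_0\), goes through unchanged. The scale function \(G(\cdot-x)\) and the explicit expected-exit-time solution \(\eta G+\zeta-(s-x)G\) still apply, and as \(x\to\infty\) the drift tends to \(-1\) uniformly on \([s_0,R]\). This gives
\[
(J^x_R)'(s_0+)\;\longrightarrow\; c\Big(1-\frac{2(R-s_0)}{e^{2(R-s_0)}-1}\Big)+\frac{2\big(k(R)-k(s_0)\big)}{e^{2(R-s_0)}-1},
\]
and the right-hand side tends to \(c\) as \(R\to\infty\) under \eqref{subcritical_growth_k}. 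Since \(J^x_R(s_0)=k(s_0)\) and \(k'(s_0+)>c\), we get \(J^x_R(s)<k(s)\) for \(s\) slightly above \(s_0\), once \(R\) and then \(x\) are chosen large. These points are therefore in \(\mathcal C_x\), which forces \(d(x)\le s_0<d_\infty\) and contradicts the monotone convergence \(d(x)\downarrow d_\infty\). Hence \(d_\infty=\rho\), and \(a(x)=-x+\rho+o(1)\).

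\emph{Main obstacle.} The delicate step is the shifted derivative limit. We must check that the explicit formulas for \(p\) and \(u\) on \((s_0,R)\) have the stated limits, with \(G'(s_0-x)/(G(R-x)-G(s_0-x))\to 2/(e^{2(R-s_0)}-1)\), using the asymptotics \(1+G(z)\sim 2e^{2(z-y_0)}\) as \(z\to-\infty\). The ratio is scale-invariant, so this reduces to the \(s_0=0\) case after translation.
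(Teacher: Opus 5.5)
Your proof is correct. The lower bound is the paper's argument: Lemma~\ref{lem_slope_stop} puts the terminal coordinates $(0,\rho)$ in the stopping set. Your detour through points with $H<0$ is unnecessary, since $d(x)\ge0$ and an open interval $(d(x),x+b(x))$ disjoint from $(0,\rho)$ already forces $d(x)\ge\rho$.

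For the upper bound you take a genuinely different route.

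\textbf{The paper's route.} It fixes an interior point $s>\rho$ and stops at the exit from $(\rho,R)$, with the lower endpoint at $\rho$ itself. It then makes a zeroth-order comparison: letting $x\to\infty$ and then $R\to\infty$ gives $J^x_{\rho,R}(s)\to k(\rho)+c(s-\rho)$. This is strictly below $k(s)$ because $\rho$ is the largest minimizer of $r\mapsto k(r)-cr$.

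\textbf{Your route.} You place the lower endpoint at $s_0>\rho$ and compare slopes at that endpoint, exactly as in Proposition~\ref{th_lower_boundary_dichotomy}. This reduction is translation invariance, not scale invariance: it is the $s_0=0$ computation under $(x,R)\mapsto(x-s_0,R-s_0)$, with cost $k(\cdot+s_0)$. Your limits for $p'(s_0+)$ and $u'(s_0+)$ check out.

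Placing the endpoint strictly above $\rho$ is essential in your version. At $\rho$ itself one can have $k'(\rho)=c$ (e.g.\ for power costs), and the first-order comparison then degenerates. The paper avoids this by comparing values rather than slopes.

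\textbf{What each buys.} Your approach shows that diagonal contact (the case $s_0=0$, $k'(0+)>c$) and the non-contact asymptote come from literally the same local computation, driven only by $k'(s_0)>c$. The paper's approach is more direct: it needs no $x$-dependent $\varepsilon_x$, and it shows that each fixed $s>\rho$ eventually lies in $\mathcal C$.

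Finally, the contradiction with $d_\infty$ (and hence the appeal to monotonicity of $d$) is not needed. Your computation already gives $d(x)\le s_0$ for all large $x$, for every $s_0>\rho$.
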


\begin{proof}
    We first prove the lower bound. Let \(0<s<\rho\). By the definition of
    \(\rho\), we have \(k'(s)\le c\). Since \(G\ge -1\), we have
    \[
        k'(s)G(-x)+c\ge c-k'(s)\ge0.
    \]
    Lemma \ref{lem_slope_stop} implies that immediate stopping is optimal at the point \((x,s-x)\), for every \(x\ge0\). Thus, no point with terminal coordinate \(s<\rho\) can belong to the continuation region, and
    \(
        d(x)=x+a(x)\ge \rho,
    \)
    $x\in\mathcal I$.
    We now prove the reverse bound under \eqref{subcritical_growth_k}. Fix \(s>\rho\). Since \(\rho\) is the largest minimizer of
    \(r\mapsto k(r)-cr\), we have
    \(
        k(s)>k(\rho)+c(s-\rho).
    \)
    We show now that, for all sufficiently large \(x\), the point with terminal coordinate \(s\) belongs to the continuation region.

    Choosing \(R>s\), we let
    \[
        \tau_{\rho,R}^{x,s}
        \coloneqq
        \inf\{t\ge0:Z_s^x(t)\notin(\rho,R)\},
        \qquad
        J_{\rho,R}^x(s)
        \coloneqq
        \ex\big[k(Z_s^x(\tau_{\rho,R}^{x,s}))
        +c\,\tau_{\rho,R}^{x,s}\big].
    \]
    Exactly as in the proof of Proposition \ref{th_lower_boundary_dichotomy}, but now with the lower endpoint \(0\) replaced by \(\rho\), the scale formula and the explicit expected-exit-time solution give
    \[
        J_{\rho,R}^x(s)
        \longrightarrow
        k(\rho)\big(1-\bar p_{\rho,R}(s)\big)
        +k(R)\bar p_{\rho,R}(s)
        +c\,\bar u_{\rho,R}(s),
        \qquad x\to\infty,
    \]
    where
    \[
        \bar p_{\rho,R}(s)
        =
        \frac{e^{2(s-\rho)}-1}{e^{2(R-\rho)}-1} \quad \text{ and } \quad 
        \bar u_{\rho,R}(s)
        =
        (s-\rho)
        -(R-\rho)
        \frac{e^{2(s-\rho)}-1}{e^{2(R-\rho)}-1}.
    \]
    Letting \(R\to\infty\), we have
    \(
        \bar p_{\rho,R}(s)\to0
    \)
    and
    \(
        \bar u_{\rho,R}(s)\to s-\rho,
    \)
    and the subcritical growth condition \eqref{subcritical_growth_k} gives
    \(k(R)\bar p_{\rho,R}(s)\to0\). Therefore
    \[
        \lim_{R\to\infty}\lim_{x\to\infty}J_{\rho,R}^x(s)
        =
        k(\rho)+c(s-\rho).
    \]
    Since this is strictly smaller than \(k(s)\), we may choose \(R>s\) and then \(x\) large enough so that
    \[
        J_{\rho,R}^x(s)<k(s).
    \]
    Thus, waiting until \(\tau_{\rho,R}^{x,s}\) improves upon immediate stopping from \((x,s-x)\), and hence \(s-x\in\mathcal C_x\) for all sufficiently large \(x\). In particular, we obtain $d(x)=x+a(x)\le s$ for all sufficiently large \(x\), which finishes the proof upon recalling that \(s>\rho\) was arbitrary.
\end{proof}
\vspace{0.25\baselineskip}

\begin{remark}
\begingroup

\setlength{\parskip}{0pt}
    For power terminal costs
    \(
        k(s)=\alpha s^\beta+\gamma,
        \ \alpha>0, \beta>1,
    \)
    we have \(k'(0+)=0<c\). Hence the lower boundary never touches the diagonal,
    and
    \[
        \rho=\left(\frac{c}{\alpha\beta}\right)^{1/(\beta-1)}.
    \]
    In particular, for the normalized quadratic cost \(k(s)=s^2/2\), one has \(\rho=c\).

    For linear costs \(k(s)=\alpha s+\gamma\), one has \(k'(0+)=\alpha\). If \(\alpha\le c\), then immediate stopping is optimal everywhere. If \(\alpha>c\), then the lower boundary eventually coincides with the diagonal.

    For exponential costs
    \(
        k(s)=\alpha(e^{\lambda s}-\gamma),
        \ \alpha,\lambda>0,
    \)
    one has \(k'(0+)=\alpha\lambda\). In the subcritical regime \(\lambda<2\), the lower boundary never touches the diagonal if \(\alpha\lambda\le c\), while it eventually coincides with the diagonal if \(\alpha\lambda>c\). In the non-contact case,
    \[
        \rho=\frac{1}{\lambda}\log\left(\frac{c}{\alpha\lambda}\right).
    \]
\endgroup
\end{remark}

\subsection{Upper-boundary asymptotics}
\label{subsec:upper_boundary_asymptotics}

We now study the asymptotic behavior of the upper boundary \(b(\cdot)\). As in the previous subsection, it is useful to pass from the displacement variable \(y\)
to the terminal-coordinate variable
\(
    s=x+y.
\)
For the lower boundary, this led to the quantity
\(
    d(x)=x+a(x)
\)
of \eqref{def_d_boundary},
which measures the distance from the diagonal in terminal-coordinate variables. Importantly, we already obtained that $d(\cdot)$ is non-increasing, which, together with the non-negativity property, implies its boundedness.

For the upper boundary we shall use the analogous notation
\begin{equation}\label{def_e_upper_boundary}
    e(x)\coloneqq x+b(x).
\end{equation}
Thus, \(b(\cdot)\) is the upper endpoint of the continuation region in \(y\)-coordinates, while \(e(\cdot)\) is the same endpoint measured in the terminal-cost coordinate
\(s\).

The main point of this subsection is that the fixed-\(x\) balancing condition implies a simple asymptotic relation between these two quantities. At the upper boundary, the posterior weight of the negative drift must balance the relative marginal growth of the terminal cost. More precisely, under the tail condition \(k'(s)\to\infty\), we shall prove the fundamental balance
\begin{equation}\label{informal_upper_balance}
    1-G(b(x))
    \sim
    \frac{k'(e(x))}{k(e(x))}
\end{equation}
between the posterior probability
\((1-G(b(x)))/2\) of the negative drift at the upper boundary and the logarithmic derivative \(k'(e(x))/k(e(x))\) of the terminal cost at the corresponding terminal level.

This balance explains the possible regimes. If \(k'(s)/k(s)\to0\), then \(1-G(b(x))\to0\), and hence \(b(x)\to+\infty\). If \(k'(s)/k(s)\to\lambda\in(0,2)\), then the balance gives the finite limit
\[
    b(x)\to G^{-1}(1-\lambda)
    =
    y_0+\frac12\log\frac{2-\lambda}{\lambda}.
\]
The upper bound \(2\) is intrinsic: since \(G\in(-1,1)\), the quantity \(1-G\) takes values only in \((0,2)\). Thus, if \(k'(s)/k(s)\to\lambda\ge2\), the balance \eqref{informal_upper_balance} cannot hold along any tail on which \(b(\cdot)\) is bounded below. Importantly, under the vertical monotonicity assumption \ref{ass_3}, the boundary \(b(\cdot)\) is non-decreasing, so this bounded-below condition is automatic whenever continuation sections exist for all large \(x\). Hence, a supercritical tail \(\lambda\ge2\) rules out a nonempty continuation tail. This also agrees with Proposition \ref{prop_birth_sections} for exponential costs: if the exponential rate is at least \(2\), then \(\mathcal I=\varnothing\).

The exact identity \eqref{upper_endpoint_identity} below is the starting point. It rewrites the fixed-\(x\) balancing condition at the upper endpoint and is then reduced to \eqref{informal_upper_balance} by letting \(x\to\infty\).

\begin{Lem}\label{lem_upper_endpoint_identity}
    Let \(x\in\mathcal I\), and set
    \(
        a=a(x),\, b=b(x),\,
        d=x+a,\, e=x+b.
    \)
    Then
    \begin{equation}\label{upper_endpoint_identity}
        \frac{G'(b)}{G(b)-G(a)}
        \left[
            k(e)-k(d)+cG(a)(b-a)
        \right]
        =
        k'(e)+cG(b).
    \end{equation}
\end{Lem}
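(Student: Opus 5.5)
The identity should follow from value matching and smooth fit of the explicit continuation solution at the two endpoints of $\mathcal C_x=(a,b)$. By Theorem~\ref{th_fixed_x_solution} and \eqref{value_inside_C_coefficients}, on $(a,b)$ we have $U(x,y)=QG(y)+R-cyG(y)$ with $Q=Q(x)=\Phi(x,b)$ and $R=R(x)$ as in \eqref{def_q_R_coefficients}. Lemma~\ref{lem_gluing_fixed_x} gives value matching at both endpoints, $U(x,a)=k(d)$ and $U(x,b)=k(e)$, and smooth fit at the upper endpoint, $D_yU(x,b)=k'(e)$. Notably, smooth fit at $a$ is not needed, so the argument covers the interior regime $a>-x$ and the contact regime $a=-x$ simultaneously.

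First, I will subtract the two value-matching equations to eliminate $R$:
\[
k(e)-k(d)=Q\big(G(b)-G(a)\big)-c\big(bG(b)-aG(a)\big).
\]
Writing $bG(b)-aG(a)=b\big(G(b)-G(a)\big)+(b-a)G(a)$ and dividing by $G(b)-G(a)>0$ gives
\[
Q-cb=\frac{k(e)-k(d)+cG(a)(b-a)}{G(b)-G(a)}.
\]

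Second, I will use smooth fit at $b$:
\[
k'(e)=QG'(b)-cG(b)-cbG'(b)=(Q-cb)G'(b)-cG(b).
\]
Equivalently, this is just the definition $Q=\Phi(x,b)=\big(k'(e)+cG(b)\big)/G'(b)+cb$ from \eqref{def_phi_2d}.

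Finally, substituting the expression for $Q-cb$ from the first step gives exactly \eqref{upper_endpoint_identity}.

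The whole proof is routine algebra; the only point needing care is the justification that value matching holds at $a$ even when $a=-x$. This is built into the definition of $R_x$ in \eqref{def_Rx_fixed_x}. Also, $G(b)\neq G(a)$ because $a<b$ and $G$ is strictly increasing.
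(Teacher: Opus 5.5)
Your proof is correct and is essentially the paper's argument. The paper starts from the balancing equation $\int_a^b(\Phi(x,z)-\Phi(x,b))\,dG(z)=0$ and evaluates $\int_a^b\Phi(x,z)\,dG(z)=k(e)-k(d)+c\,(bG(b)-aG(a))$, which is precisely your difference of the two value-matching conditions (cf.\ \eqref{F_integral}); it then substitutes $\Phi(x,b)=(k'(e)+cG(b))/G'(b)+cb$, which is your smooth-fit step, so you have only rephrased the vanishing of the signed $dG$-area as value matching at $b$.
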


\begin{proof}
    In both boundary regimes, we know from Section \ref{subsec_main_construction} that the balancing equation is
    \begin{equation}\label{technical_balance}
        \int_a^b \big(\Phi(x,z)-\Phi(x,b)\big)\,dG(z)=0,  \quad \text{ thus also } \quad \int_a^b \Phi(x,z)\,dG(z)
        =
        \Phi(x,b)\big(G(b)-G(a)\big).
    \end{equation}
    Using the definition of \(\Phi\), the integral part becomes
    \[
        \begin{split}
        \int_a^b \Phi(x,z)\,dG(z) =
        \int_a^b
        \left[
            k'(x+z)+cG(z)+czG'(z)
        \right]dz =
        k(e)-k(d)+c\big(bG(b)-aG(a)\big).
        \end{split}
    \]
    On the other hand (cf. \eqref{def_phi_2d}),
    \[
        \Phi(x,b)=\frac{k'(e)+cG(b)}{G'(b)}+cb.
    \]
    Substituting this into the identity above and rearranging gives
    \eqref{upper_endpoint_identity}.
\end{proof}

The identity \eqref{upper_endpoint_identity} implies the asymptotic balance \eqref{informal_upper_balance}.

\begin{Prop}\label{prop_upper_boundary_balance}
    Assume that \(\mathcal I\neq\varnothing\), and that
    \(
        k'(s)\to\infty
    \)
    as $s\to\infty$.
    Then, along every sequence \(x\to\infty\), \(x\in\mathcal I\), for which
    \(b(x)\) is bounded below,
    \begin{equation}\label{upper_boundary_balance}
        1-G(b(x))
        =
        \frac{k'(x+b(x))}{k(x+b(x))}(1+o(1)).
    \end{equation}
\end{Prop}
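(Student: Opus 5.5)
The plan is to start from the exact identity \eqref{upper_endpoint_identity} of Lemma~\ref{lem_upper_endpoint_identity} and divide both sides by \(k(e)\), where \(e=e(x)=x+b(x)\). Using \(G'=1-G^2=(1-G)(1+G)\), the identity becomes
\[
    (1-G(b))\,\frac{1+G(b)}{G(b)-G(a)}
    \left[1-\frac{k(d)}{k(e)}+\frac{cG(a)(b-a)}{k(e)}\right]
    =\frac{k'(e)}{k(e)}+\frac{cG(b)}{k(e)}.
\]
The task is then to show that the factor multiplying \(1-G(b)\) on the left tends to \(1\), and that the right-hand side equals \(\frac{k'(e)}{k(e)}(1+o(1))\).

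First, I would record the growth facts. The hypothesis \(k'(s)\to\infty\) gives \(k(s)\to\infty\) and \(s=o(k(s))\). Since \(b\ge b_{\min}\) along the sequence, we have \(e\ge x+b_{\min}\to\infty\), so \(k(e)\to\infty\) and \(k'(e)\to\infty\). On the right-hand side, \(|cG(b)|\le c\), so
\[
    \frac{cG(b)}{k(e)}=o\!\left(\frac{k'(e)}{k(e)}\right),
\]
because \(c/k'(e)\to0\).

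Second, I would control the lower endpoint.
\begin{itemize}
    \item By Proposition~\ref{prop_diagonal_expansion}, \(d(x)\) is non-increasing and nonnegative, so it is bounded, and hence \(k(d)/k(e)\to0\).
    \item \(a(x)=d(x)-x\to-\infty\), so \(G(a)\to-1\).
    \item Since \(b-a=e-d\le e\), we get \(|cG(a)(b-a)|/k(e)\le ce/k(e)\to0\).
\end{itemize}
The bracket therefore tends to \(1\). It remains to show \((1+G(b))/(G(b)-G(a))\to1\). With \(G(a)\to-1\), the numerator and denominator differ by \(1+G(a)\to0\). Both stay bounded away from zero because \(b\) is bounded below, so \(1+G(b)\ge1+G(b_{\min})>0\). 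This is exactly where the bounded-below hypothesis enters.

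The main obstacle is keeping the multiplicative error genuinely \(1+o(1)\) even when \(1-G(b)\to0\), that is, when \(b\to\infty\). Because the relation is multiplicative and every correction above is a relative error, this should be harmless. Combining the steps gives
\[
    (1-G(b))(1+o(1))=\frac{k'(e)}{k(e)}(1+o(1)),
\]
which is \eqref{upper_boundary_balance}. The one point to check carefully is that the bracket limit uses only the boundedness of \(d\), and not \(d\to\rho\); this holds in both boundary regimes, and in the contact regime \(d=0\) trivially.
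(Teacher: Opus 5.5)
Your proposal is correct and follows essentially the same route as the paper. Both arguments start from the identity \eqref{upper_endpoint_identity}, use boundedness of $d$, $G(a)\to-1$ and $k(s)/s\to\infty$ to reduce the bracket to $k(e)(1+o(1))$ and the right side to $k'(e)(1+o(1))$, and then factor $G'(b)=(1-G(b))(1+G(b))$, with $b$ bounded below giving $(1+G(b))/(G(b)-G(a))\to1$; normalizing by $k(e)$ at the outset is only a cosmetic difference.
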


\begin{proof}
    Let \(a=a(x)\), \(b=b(x)\), \(d=x+a\), and \(e=x+b\). Since
    \(d\) is non-negative and non-increasing, it is bounded along the tail of
    \(\mathcal I\). Since \(b\) is bounded below along the sequence under
    consideration, we have \(e\to\infty\), while \(a=d-x\to-\infty\), and hence
    \(G(a)\to-1\). Moreover, \(k(d)\) is bounded. Since \(k\) is convex and
    \(k'(s)\to\infty\), we have \(k(s)/s\to\infty\). As \(b-a=e-d\), this gives
    \(
        \big|cG(a)(b-a)\big|
        \le c(e+O(1))
        =
        o(k(e)).
    \)
    Consequently,
    \(
        k(e)-k(d)+cG(a)(b-a)
        =
        k(e)(1+o(1)),
    \)
    and since \(k'(e)\to\infty\), we also have
    \(
        k'(e)+cG(b)=k'(e)(1+o(1)).
    \)
    Therefore, \eqref{upper_endpoint_identity} gives
    \[
        \frac{G'(b)}{G(b)-G(a)}
        =
        \frac{k'(e)}{k(e)}(1+o(1)).
    \]
    Since \(G(a)\to -1\) and \(b\) is bounded below, we obtain
    \[
        \frac{G'(b)}{G(b)-G(a)}
        =
        \frac{(1-G(b))(1+G(b))}{G(b)-G(a)}
        =
        (1-G(b))(1+o(1)),
    \]
    proving \eqref{upper_boundary_balance}.
\end{proof}

The balance \eqref{upper_boundary_balance} identifies the possible limits of the upper boundary, once \(b(\cdot)\) is known to stay bounded from below. The next proposition shows that this lower bound is automatic in the subcritical tail regimes relevant for the balance. The proof has two parts: first we use a simple exit strategy to show that points below the predicted limit eventually belong to the continuation region; then we apply \eqref{upper_boundary_balance} to rule out points above that limit.

For \(0<\lambda<2\), define
\begin{equation}\label{def_beta_lambda}
    \beta_\lambda
    \coloneqq
    G^{-1}(1-\lambda)
    =
    y_0+\frac12\log\frac{2-\lambda}{\lambda}.
\end{equation}
For \(\lambda=0\), we use the convention \(\beta_0=+\infty\).

\begin{Prop}\label{prop_upper_boundary_asymptotics}
    Assume that \(\mathcal I\neq\varnothing\), that
    \(
        k'(s)\to\infty,\, s\to\infty,
    \)
    and that
    \(
        k'(s)/k(s)\to\lambda\in[0,2)
    \)
    as $s\to\infty$. Then
    \begin{equation}\label{upper_boundary_limit_general}
        b(x)\to \beta_\lambda,
        \qquad x\to\infty,\quad x\in\mathcal I.
    \end{equation}
\end{Prop}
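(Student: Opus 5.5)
The proof has two halves. First, points below the predicted limit $\beta_\lambda$ eventually belong to the continuation region; this gives $\liminf b(x)\ge\beta_\lambda$ and, in particular, makes $b(\cdot)$ bounded below along the tail of $\mathcal I$. Second, once that lower bound is available, the balance \eqref{upper_boundary_balance} of Proposition~\ref{prop_upper_boundary_balance} applies and forces $\limsup b(x)\le\beta_\lambda$. One preliminary remark: $\mathcal I$ is a right interval by Proposition~\ref{prop_diagonal_expansion}, so $x\to\infty$ within $\mathcal I$ makes sense.

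\textbf{Lower bound.} Fix a finite $y<\beta_\lambda$; when $\lambda=0$ the number $y$ is arbitrary. Then $1-G(y)>\lambda$, and I aim to show $(x,y)\in\mathcal C$ for all large $x$. I would compare immediate stopping with the exit strategy from an interval $(y-L,y+L)$, or more flexibly $(y-L,y+L')$. Write $s=x+y$ for the terminal coordinate. For large $x$, $k$ is nearly exponential on $[s-L,s+L]$ at rate close to $\lambda$, in the sense that $k(s+h)/k(s)\approx e^{\lambda h}$ uniformly on bounded $h$. This uses $k'/k\to\lambda$ integrated over a bounded window. The running cost $c\,\ex[\tau]$ is bounded independently of $x$, while $k(s)\to\infty$, so it is negligible relative to $k(s)$.

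So it suffices to find $L$ with
\[
\ex\big[e^{\lambda(Y_y(\tau)-y)}\big]<1 .
\]
The process $Y_y$ has generator $\mathcal L$, and $\mathcal L e^{\lambda z}$ evaluated at $z=y$ equals $e^{\lambda y}\lambda\big(\tfrac{\lambda}{2}+G(y)\big)$. The inequality $1-G(y)>\lambda$ only gives $G(y)<1-\lambda$, which does not make this quantity negative. So the naive local generator test is wrong. The correct mechanism is the global one visible in \eqref{upper_endpoint_identity}: the lower endpoint reaching far to the left, with $G(a)\approx-1$.

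I would therefore take the strategy exiting $(-x+\rho',\,y+L)$, that is, an interval whose left end lies at a bounded terminal level, and compute its cost exactly via the scale function $G$. With lower terminal level $d$ fixed, the probability of exiting at the top is
\[
\frac{G(y)-G(d-x)}{G(y+L)-G(d-x)}\to\frac{1+G(y)}{1+G(y+L)} .
\]
The expected time is $O(x)$, so the running cost is $O(x)=o(k(s))$, because $k$ grows superlinearly (since $k'\to\infty$). Hence the cost ratio to $k(s)$ tends to
\[
\frac{1+G(y)}{1+G(y+L)}\,e^{\lambda L}.
\]
Note $\frac{d}{dL}\log\big[(1+G(y+L))e^{-\lambda L}\big]$ at $L=0$ equals $1-G(y)-\lambda>0$, using $G'=(1-G)(1+G)$. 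So for a small $L>0$ the ratio is $<1$, and the strategy strictly beats immediate stopping. Hence $y\in\mathcal C_x$, giving $b(x)>y$ eventually.

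The main care in this step is uniformity: I need $k(s+h)/k(s)\to e^{\lambda h}$ uniformly on $h\in[0,L]$, and $k(d)/k(s)\to0$ for the bottom exit.

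\textbf{Upper bound.} With $b$ bounded below, apply \eqref{upper_boundary_balance}. If $\lambda>0$, then since $e(x)=x+b(x)\to\infty$ we get $k'(e)/k(e)\to\lambda$, hence $G(b(x))\to1-\lambda$ and $b(x)\to\beta_\lambda$. If $\lambda=0$, the lower-bound step already gives $b(x)\to\infty=\beta_0$.

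I expect the main obstacle to be the lower-bound step. Its difficulty is identifying the right comparison strategy and controlling the $O(x)$ running cost against $k(s)$.
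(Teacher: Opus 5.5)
Your proposal is correct and follows essentially the paper's route: the lower bound uses the exit strategy whose bottom end sits at a bounded terminal level (the paper takes the diagonal $-x$ itself) and whose top end is $R=y+L$, giving the limiting cost ratio $e^{\lambda(R-y)}(1+G(y))/(1+G(R))$, and the upper bound comes from the balance \eqref{upper_boundary_balance}. The differences are cosmetic: the paper takes $R=\beta_\lambda$, the global minimizer of $R\mapsto e^{\lambda R}/(1+G(R))$, where you use a first-order argument at $L=0$. The paper also argues the upper bound by contradiction along subsequences, whereas you apply the balance directly along the whole tail once the lower bound shows that $b$ is bounded below.
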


\begin{proof}
    First, we prove the lower bound on \(b(x)\). Since \(k'(s)\to\infty\) and
    \(k\) is convex, \(k(s)/s\to\infty\). Also, for fixed \(u,v\in\mathbb R\), the assumption on the logarithmic derivative gives
    \begin{equation}\label{k_u_v}
        \frac{k(x+u)}{k(x+v)}
        =
        \exp\left(
            \int_v^u \frac{k'(x+r)}{k(x+r)}\,dr
        \right)
        \longrightarrow
        e^{\lambda(u-v)}, \quad \text{ as } x \to \infty.
    \end{equation}

    Fix a finite \(y\), and choose \(R>y\). Starting from the displacement
    \(y\), we stop \(Y_y(\cdot)\) at
    \[
        \tau_x
        \coloneqq
        T_{-x}\wedge T_R,
        \qquad
        T_\xi\coloneqq \inf\{t\ge0:Y_y(t)=\xi\}.
    \]
    The scale formula gives
    \begin{equation}\label{prob_right_exit}
        \pr_y(T_R<T_{-x})
        =
        \frac{G(y)-G(-x)}{G(R)-G(-x)}
        \longrightarrow
        \frac{1+G(y)}{1+G(R)}, \quad \text{ as } x \to \infty.
    \end{equation}
    Using the same expected-exit-time ODE calculation as in Step~3 of the proof of Proposition \ref{th_lower_boundary_dichotomy}, now with boundary conditions at \(-x\) and \(R\), we obtain
    \(
        \mathbb E_y[\tau_x]=O(x),
        \, x\to\infty,
    \)
    for fixed \(y<R\).
    Since \(k(x+y)/x\to\infty\) as $x \to \infty$, the running cost is negligible after normalization by \(k(x+y)\). Therefore, from \eqref{k_u_v} and \eqref{prob_right_exit} we obtain
    \begin{equation}\label{upper_boundary_test_ratio}
        \frac{
            \ex_y\big[k(x+Y_y(\tau_x))+c\tau_x\big]
        }
        {k(x+y)}
        \longrightarrow
        e^{\lambda(R-y)}
        \frac{1+G(y)}{1+G(R)}.
    \end{equation}

    Suppose first that \(0<\lambda<2\). Simple algebra shows that the function
    \(
        R\mapsto e^{\lambda R}(1+G(R))^{-1}
    \)
    is minimized at the unique point \(R=\beta_\lambda\). Hence, if \(y<\beta_\lambda\), choosing \(R=\beta_\lambda\) in \eqref{upper_boundary_test_ratio} gives a limiting ratio strictly smaller than one. Thus, \(y\in\mathcal C_x\) for all sufficiently large \(x\). Since this holds for every \(y<\beta_\lambda\), we deduce
    \(
        \liminf_{x\to\infty} b(x)\ge \beta_\lambda.
    \)

    We now prove the matching upper bound. Suppose, toward a contradiction, that along a sequence \(x_n\to\infty\),
    \(
        b(x_n)\ge \beta_\lambda+\varepsilon
    \)
    for some \(\varepsilon>0\). In particular, this means that the sequence \(\{b(x_n)\}_{n \ge 1}\) is bounded below, so Proposition \ref{prop_upper_boundary_balance} applies along this sequence. Hence
    \[
        1-G(b(x_n))
        =
        \frac{k'(x_n+b(x_n))}{k(x_n+b(x_n))}(1+o(1))
        \longrightarrow \lambda.
    \]
    If \(b(x_n)\to+\infty\) along a subsequence, then the left-hand side tends to zero, leading to a contradiction, since we work with $\lambda > 0$. Otherwise, after passing to a subsequence, \(b(x_n)\to L<\infty\), with \(L\ge\beta_\lambda+\varepsilon\), and
    \(
        1-G(L)<1-G(\beta_\lambda)=\lambda,
    \)
    leading again to a contradiction. Therefore
    \(
        \limsup_{x\to\infty} b(x)\le \beta_\lambda,
    \)
    and, together with the lower bound, this proves \(b(x)\to\beta_\lambda\) when \(0<\lambda<2\).

    It remains to consider \(\lambda=0\). In this case, for every finite \(y\), choose any \(R>y\). The limiting ratio in \eqref{upper_boundary_test_ratio} becomes
    \[
        \frac{1+G(y)}{1+G(R)}<1.
    \]
    Hence, every finite \(y\) belongs to \(\mathcal C_x\) for all sufficiently large \(x\). Therefore \(b(x)\to+\infty=\beta_0\).
\end{proof}

The preceding proposition gives explicit asymptotics for the main examples.

\begin{Cor}\label{cor_upper_boundary_examples}
    Assume that the hypotheses of Proposition
    \ref{prop_upper_boundary_asymptotics} hold.

    \begin{enumerate}[label=\textup{(\roman*)}]
        \item For power costs
        \(
            k(s)=\alpha s^\beta+\gamma,
            \, \alpha>0, \beta>1,
        \)
        one has
        \(
            b(x)\to+\infty,
        \)
        and more precisely
        \begin{equation}\label{power_upper_boundary_asymptotic}
            b(x)
            =
            y_0+\frac12\log\frac{2x}{\beta}+o(1), \qquad x \to \infty.
        \end{equation}

        \item For exponential costs
        \(
            k(s)=\alpha(e^{\lambda s}-\gamma),
            \, \alpha>0, 0<\lambda<2,
        \)
        one has
        \begin{equation}\label{exponential_upper_boundary_limit}
            b(x)\to
            y_0+\frac12\log\frac{2-\lambda}{\lambda}.
        \end{equation}
    \end{enumerate}
\end{Cor}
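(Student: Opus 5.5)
For the exponential case (ii), Proposition~\ref{prop_upper_boundary_asymptotics} applies directly. Indeed, $k'(s)=\alpha\lambda e^{\lambda s}\to\infty$. Moreover,
\[
\frac{k'(s)}{k(s)}=\frac{\lambda e^{\lambda s}}{e^{\lambda s}-\gamma}\to\lambda\in(0,2).
\]
So $b(x)\to\beta_\lambda=y_0+\tfrac12\log\frac{2-\lambda}{\lambda}$ by \eqref{def_beta_lambda}. Nothing further is needed there.

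For the power case (i), the same proposition gives $b(x)\to+\infty$, because $k'(s)\to\infty$ for $\beta>1$ and $k'(s)/k(s)\sim\beta/s\to0$. To get the refined rate, I would apply the balance of Proposition~\ref{prop_upper_boundary_balance} along the whole tail. This is legitimate since $b(x)\to\infty$, so $b$ is bounded below. The balance reads
\[
1-G(b(x))=\frac{\beta}{e(x)}\,(1+o(1)),\qquad e(x)=x+b(x),
\]
using $k'(e)/k(e)=\alpha\beta e^{\beta-1}/(\alpha e^\beta+\gamma)=(\beta/e)(1+o(1))$ because $e\to\infty$. Next I would use $G(z)=\tanh(z-y_0)$, which gives
\[
1-G(z)=\frac{2e^{-2(z-y_0)}}{1+e^{-2(z-y_0)}}=2e^{-2(z-y_0)}(1+o(1))\quad\text{as } z\to\infty.
\]
Hence $2e^{-2(b-y_0)}=\frac{\beta}{x+b}(1+o(1))$. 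Taking logarithms,
\[
b(x)=y_0+\tfrac12\log\frac{2(x+b(x))}{\beta}+o(1).
\]

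The remaining step is to replace $x+b(x)$ by $x$ inside the logarithm. This needs $b(x)=o(x)$, since then $\log(x+b)=\log x+\log(1+b/x)=\log x+o(1)$. To obtain it, I would bootstrap from the displayed identity itself. Because $b\to\infty$, we have $b>0$ eventually, and the identity gives $b\le y_0+\tfrac12\log(x+b)+C$. Since $\log(x+b)\le\log 2+\log\max(x,b)$, if $b\ge x$ along some sequence we would get $b\le\tfrac12\log b+O(1)$, which is impossible for $b\to\infty$. So $b<x$ eventually, and then $b=O(\log x)=o(x)$. Substituting back yields \eqref{power_upper_boundary_asymptotic}.

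The main point needing care is that the $o(1)$ in Proposition~\ref{prop_upper_boundary_balance} holds uniformly along the full tail $x\to\infty$ in $\mathcal I$, not just along sequences. This follows by the usual subsequence argument: every sequence satisfies the hypothesis, so the relation holds along the full limit. I would also check that $\mathcal I\neq\varnothing$, which is guaranteed by the hypotheses of Proposition~\ref{prop_upper_boundary_asymptotics} and, for $\beta>1$, by Remark~\ref{remark_special_functions}. Finally, I would confirm that the factor $(1+o(1))$ turns into an additive $o(1)$ after taking $\tfrac12\log$; it does.
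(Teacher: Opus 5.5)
Your proposal is correct and follows the paper's proof essentially step for step. The exponential case comes directly from Proposition~\ref{prop_upper_boundary_asymptotics}; the power case applies the balance \eqref{upper_boundary_balance} with $1-G(b)\sim 2e^{-2(b-y_0)}$, takes logarithms, and uses $b(x)=O(\log(x+b(x)))$, hence $b(x)=o(x)$. Your bootstrap ruling out $b\ge x$ simply spells out that last step, which the paper states in one line.
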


\begin{proof}
    For power costs, \(k'(s)/k(s)\sim \beta/s\), and hence
    \(k'(s)/k(s)\to0\) as $s \to \infty$. Proposition
    \ref{prop_upper_boundary_asymptotics} then gives \(b(x)\to+\infty\) as $x \to \infty$. To obtain the exact asymptotic behavior \eqref{power_upper_boundary_asymptotic}, we apply the
    balance \eqref{upper_boundary_balance} to get
    \[
        1-G(b(x))
        \sim
        \frac{\beta}{x+b(x)}.
    \]
    Since \(b(x)\to+\infty\) as $x \to \infty$, we also have
    \(
        1-G(b(x))
        \sim
        2e^{-2(b(x)-y_0)},
    \)
    as $x \to \infty$.
    Therefore,
    \(
        2e^{-2(b(x)-y_0)}
        \sim
        \beta/(x+b(x)),
    \)
    and taking logarithms gives
    \[
        b(x)
        =
        y_0+\frac12\log\frac{2(x+b(x))}{\beta}+o(1), \quad \text{ as } x \to \infty.
    \]
    In particular, \(b(x)=O(\log(x+b(x)))\), and hence \(b(x)=o(x)\) as $x \to \infty$, which yields
    \eqref{power_upper_boundary_asymptotic}.

    For exponential costs, \(k'(s)/k(s)\to\lambda\). Thus, \eqref{exponential_upper_boundary_limit} follows immediately from \eqref{upper_boundary_limit_general}.
\end{proof}

\subsection{Properties of the value function}
\label{subsec:value_function_properties}

We close this section with the corresponding properties of the value function. These statements are mostly consequences of the fixed-\(x\) solution and of the geometric results above, but it is useful to collect them before returning to the original one-dimensional problem.

First, the fixed-\(x\) representation gives the following global formula.

\begin{Prop}\label{prop_value_representation_2d}
    If \(x\notin\mathcal I\), then
    \(
        U(x,y)=k(x+y)
    \)
    for all $y \ge -x$.
    If \(x\in\mathcal I\), and with \(Q(x)\) and \(R(x)\)  defined in \eqref{def_q_R_coefficients}, we have 
    \begin{equation}\label{value_formula_2d_global}
        U(x,y)
        =
        \begin{cases}
            k(x+y), & -x\le y\le a(x),\\[0.3em]
            Q(x)G(y)+R(x)-cyG(y), & a(x)<y<b(x),\\[0.3em]
            k(x+y), & y\ge b(x).
        \end{cases}
    \end{equation}
\end{Prop}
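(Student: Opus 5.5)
The plan is to read the statement off the fixed-$x$ solution of Theorem~\ref{th_fixed_x_solution}, written in bivariate notation. Throughout, the standing assumption of this section is that, for every $x\ge0$, either $\Phi(x,\cdot)$ is non-decreasing on $(-x,\infty)$ or the one-valley geometry \eqref{one_interval_phi_geometry} holds.

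\emph{Case $x\notin\mathcal I$.} By \eqref{def_I_via_phi}, there is no $y>-x$ with $D_y\Phi(x,y)<0$. Hence $\Phi_x$ is non-decreasing on $(-x,\infty)$, and Proposition~\ref{prop_Phi_prime} (equivalently, the first part of Theorem~\ref{th_fixed_x_solution}) gives $U_x=k_x$ on $I_x$. This is exactly $U(x,y)=k(x+y)$ for all $y\ge -x$.

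\emph{Case $x\in\mathcal I$.} Now $N_x\neq\varnothing$, so by the standing assumption we are in the one-interval regime \eqref{one_interval_phi_geometry}. The second part of Theorem~\ref{th_fixed_x_solution} then applies and yields $\mathcal C_x=(a_x,b_x)$ together with formula \eqref{fixed_x_value_formula}. The only remaining task is to identify the objects of Section~\ref{sec:one_d_problem} with those in \eqref{def_q_R_coefficients}:
\begin{itemize}
\item By the definition \eqref{C_sectional_representation}, $a(x)=a_x$ and $b(x)=b_x$.
\item Since $b_x=b_x(q_x)$ and, by construction, $\Phi_x(b_x(q))=q$ (see \eqref{def_bq_fixed_x} and the remark after \eqref{def_aq_fixed_x}), we get $Q(x)=\Phi(x,b(x))=q_x$.
\item Substituting $Q(x)=q_x$ and $a(x)=a_x$ into the definition of $R(x)$ in \eqref{def_q_R_coefficients} reproduces exactly $R_x$ from \eqref{def_Rx_fixed_x}.
\end{itemize}
The three branches of \eqref{fixed_x_value_formula} are therefore precisely those of \eqref{value_formula_2d_global}.

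The only point requiring care is the identity $\Phi_x(b_x(q))=q$. It follows from the continuity of $\Phi_x$ on $[r_x,\infty)$, combined with $\Phi_x(r_x)=\underline q_x<q$ and $\Phi_x\to\infty$. With that in hand there is no real obstacle: the statement is a bookkeeping restatement of Theorem~\ref{th_fixed_x_solution}.
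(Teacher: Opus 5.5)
Your proposal is correct and takes the same route as the paper. The paper's proof simply applies Theorem~\ref{th_fixed_x_solution} at the fixed value of $x$ and rewrites the result in the bivariate notation of Section~\ref{sec:structure}. Your explicit identifications $a(x)=a_x$, $b(x)=b_x$, $Q(x)=q_x$ (via $\Phi_x(b_x(q_x))=q_x$) and $R(x)=R_x$ are exactly the bookkeeping that proof leaves implicit.
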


\begin{proof}
    This is Theorem~\ref{th_fixed_x_solution} applied at the fixed value of
    \(x\), and expressed in the two-dimensional notation of this section.
\end{proof}

This representation also gives smooth fit in the displacement variable. At regular points of the free boundary, it gives smooth fit in the \(x\)-direction as well.

\begin{Prop}\label{prop_smooth_fit_2d}
    Let \(x\in\mathcal I\). Then \(U(x,\cdot)\) satisfies smooth fit at the
    upper boundary:
    \[
        \lim_{y\uparrow b(x)} D_y U(x,y)
        =
        \lim_{y\downarrow b(x)} D_y U(x,y)
        =
        k'(x+b(x)).
    \]
    If \(a(x)>-x\), then \(U(x,\cdot)\) also satisfies smooth fit at the lower
    boundary:
    \[
        \lim_{y\uparrow a(x)} D_y U(x,y)
        =
        \lim_{y\downarrow a(x)} D_y U(x,y)
        =
        k'(x+a(x)).
    \]

    Suppose, in addition, that the boundaries are \(C^1\) on an open interval \(J\subset\mathcal I\). Then \(U\) satisfies horizontal smooth fit as well. More precisely, if \(a(x)>-x\) on \(J\), then
    \begin{equation}\label{x_smooth_fit_interior}
        Q'(x)G(a(x))+R'(x)=k'(x+a(x)),
        \qquad
        Q'(x)G(b(x))+R'(x)=k'(x+b(x)).
    \end{equation}
    If \(a(x)=-x\) on \(J\), then we have the upper-boundary identity
    \[
        Q'(x)G(b(x))+R'(x)=k'(x+b(x)), \qquad x \in J.
    \]
\end{Prop}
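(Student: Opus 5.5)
The vertical smooth-fit assertions come straight from the fixed-$x$ construction. By Proposition~\ref{prop_value_representation_2d}, for $y$ slightly below $b(x)$ we have $U(x,y)=w_x(y)$, and for $y$ slightly above $b(x)$ we have $U(x,y)=k(x+y)$. Lemma~\ref{lem_gluing_fixed_x} gives $w_x'(b_x)=k_x'(b_x)$. Since $w_x$ is smooth on a neighbourhood of $b_x$ and $k\in C^2((0,\infty))$, the left limit of $D_yU$ is $w_x'(b(x))$ and the right limit is $k'(x+b(x))$. These coincide, which proves smooth fit at the upper boundary. When $a(x)>-x$, the same lemma gives $w_x'(a_x)=k_x'(a_x)$, and the argument at the lower boundary is identical. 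Here $x+a(x)>0$, so $k'$ is continuous at that point.

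For horizontal smooth fit, let $J$ be an open interval on which $a(\cdot)$ and $b(\cdot)$ are $C^1$. First I check that $Q$ and $R$ are $C^1$ on $J$. We have $Q(x)=\Phi(x,b(x))$, where $\Phi$ is $C^1$ in both variables on $\{y>-x\}$ by (A1) and the smoothness of $G$. Also $R(x)=k(x+a(x))-Q(x)G(a(x))+c\,a(x)G(a(x))$, which is $C^1$ provided $x+a(x)>0$. In the pinned case $a(x)=-x$, $R(x)=k(0)+Q(x)G(-x)-cxG(-x)$ up to sign conventions, which is also $C^1$.

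Next I use the value-matching identity along each boundary. For $\xi\in\{a,b\}$ in the interior case, and for $\xi=b$ in the pinned case, we have on $J$
\[
    Q(x)G(\xi(x))+R(x)-c\,\xi(x)G(\xi(x))=k(x+\xi(x)).
\]
The case $\xi=a$ holds by the definition of $R$, and the case $\xi=b$ is the balancing condition, via Lemma~\ref{lem_gluing_fixed_x}. Differentiating in $x$ gives
\[
    Q'G(\xi)+R'+\xi'\bigl(QG'(\xi)-cG(\xi)-c\,\xi G'(\xi)\bigr)=k'(x+\xi)(1+\xi').
\]
The factor multiplying $\xi'$ on the left is $D_yU(x,\xi(x)\mp)$. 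By vertical smooth fit this equals $k'(x+\xi(x))$, so the $\xi'$-terms cancel. What remains is $Q'(x)G(\xi(x))+R'(x)=k'(x+\xi(x))$. This is \eqref{x_smooth_fit_interior} for $\xi=a$ and $\xi=b$, and it is the upper-boundary identity in the pinned case.

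The main obstacle I expect is justifying this cancellation at the boundary itself: I need $w_x'$ evaluated exactly at $\xi(x)$ to equal $k'(x+\xi(x))$. This holds because $w_x$ extends smoothly past its endpoints, and Lemma~\ref{lem_gluing_fixed_x} gives the equality of derivatives there, so differentiability in $x$ of the composite is legitimate. One caveat applies to the lower identity: in the pinned regime no such identity is claimed, consistently with the absence of smooth fit at $-x$.
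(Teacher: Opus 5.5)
Your proposal is correct and follows essentially the paper's proof: vertical smooth fit is read off from the fixed-$x$ gluing (Lemma~\ref{lem_gluing_fixed_x}), and the horizontal identities come from differentiating value matching along $a(\cdot)$ and $b(\cdot)$, with the $\xi'$-terms cancelling by vertical smooth fit. One minor slip, which does not affect the $C^1$ claim: in the pinned case $R(x)=k(0)-Q(x)G(-x)-cxG(-x)$, so the $Q$-term carries a minus sign.
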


\begin{proof}
    The vertical smooth-fit identities are part of the fixed-\(x\) construction in Theorem~\ref{th_fixed_x_solution}. If \(a(x)=-x\), no lower smooth fit is imposed, since the lower endpoint is the boundary of the state space.

    We prove the horizontal identities in the interior-boundary regime; the proof at the upper boundary in the diagonal-contact regime is identical. On \(J\), value matching at the lower boundary gives
    \[
        Q(x)G(a(x))+R(x)-c\,a(x)G(a(x))=k(x+a(x)).
    \]
    Differentiating this identity with respect to \(x\), which we can do by the differentiability of $a, b, G$, $\Phi$, and hence also $Q$ and $R$ in the interior points, we obtain
    \[
        Q'(x)G(a(x))+R'(x)
        +
        \Big[Q(x)G'(a(x))-cG(a(x))-c\,a(x)G'(a(x))\Big]a'(x)
        =
        k'(x+a(x))(1+a'(x)).
    \]
    The expression in the square parentheses is the \(y\)-derivative of the continuation value at \(a(x)\), given in \eqref{value_formula_2d_global}, and by vertical smooth fit it equals \(k'(x+a(x))\). The terms containing \(a'(x)\) therefore cancel, yielding the first identity in \eqref{x_smooth_fit_interior}. The proof at \(b(x)\) is the same, using value matching and vertical smooth fit at the upper boundary.
\end{proof}

Finally, we record three monotonicity properties of \(U\). The first two concern monotonicity in each coordinate separately on the corresponding sections of the natural state space \(\mathcal D\); the third is the value-function counterpart of the diagonal expansion of the continuation region.

\begin{Prop}\label{prop_value_monotonicity_2d}
    For each fixed \(x\ge0\), the function
    \(
        y\mapsto U(x,y),\, y\ge -x
    \)
    is non-decreasing. For each fixed \(y \in \mathbb{R}\), the function
    \(
        x\mapsto U(x,y),\, x \ge \max\{0,-y\}
    \)
    is non-decreasing as well. Moreover, for every \((x,y)\in\mathcal D\) and every \(h\ge0\), we have
    \(
        U(x+h,y-h)\le U(x,y).
    \)
\end{Prop}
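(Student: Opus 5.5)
I would prove all three monotonicity statements by pathwise or stopping-time comparisons, rather than through the explicit formula \eqref{value_formula_2d_global}. That formula would require tracking how $Q$ and $R$ depend on $x$, and the comparisons avoid this entirely.

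\emph{Monotonicity in $y$.} Fix $x\ge0$ and $-x\le y_1<y_2$. I will drive $Y_{y_1}(\cdot)$ and $Y_{y_2}(\cdot)$ with the same innovations $N(\cdot)$. By the comparison theorem (as in Proposition~\ref{prop_diagonal_expansion}), $Y_{y_1}(t)\le Y_{y_2}(t)$ for all $t\ge0$.

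Take any admissible $\tau\le\tau_{-x}$ for the start $y_2$; by the reduction of Subsection~\ref{subsec_var_ineqaulities} this is no loss. Set $\sigma\coloneqq\tau\wedge\inf\{t:Y_{y_1}(t)\le -x\}$, which is admissible for the start $y_1$. There are two cases.
\begin{itemize}
\item If $\sigma<\tau$, then $k(x+Y_{y_1}(\sigma))=k(0)$, which is the minimum of $k$.
\item Otherwise $0\le x+Y_{y_1}(\tau)\le x+Y_{y_2}(\tau)$, and $k$ is non-decreasing on $[0,\infty)$.
\end{itemize}
Since also $\sigma\le\tau$, in either case the cost from $y_1$ under $\sigma$ is at most the cost from $y_2$ under $\tau$. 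Taking infima gives $U(x,y_1)\le U(x,y_2)$. This is exactly the argument of Proposition~\ref{prop_diagonal_expansion} with $h=0$ and unequal starting points.

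\emph{Diagonal inequality.} I will reuse the proof of Proposition~\ref{prop_diagonal_expansion} verbatim, without assuming a strict inequality. Fix $(x,y)\in\mathcal D$, $h\ge0$, and any $\tau\le\tau_{-x}$ for the problem at $(x,y)$. Let $\widetilde Y$ start at $y-h$ and set $Z\coloneqq\widetilde Y+h\le Y_y$. The stopping time $\sigma\coloneqq\tau\wedge\widetilde\tau_{-(x+h)}$ then satisfies
\[
\ex\big[k(x+h+\widetilde Y(\sigma))+c\sigma\big]\le \ex\big[k(x+Y_y(\tau))+c\tau\big].
\]
Taking the infimum over $\tau$ yields $U(x+h,y-h)\le U(x,y)$. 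Note that $(x+h,y-h)\in\mathcal D$ automatically.

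\emph{Monotonicity in $x$.} Fix $y$ and $\max\{0,-y\}\le x_1<x_2$. Any stopping time $\tau$ admissible at $x_2$ can be reduced to $\tau\le\tau_{-x_2}$ for the process $Y_y$. Since $-x_2<-x_1$, the path may cross below $-x_1$ before $\tau$. I will therefore use $\sigma\coloneqq\tau\wedge\tau_{-x_1}$ for the problem at $x_1$. There are again two cases.
\begin{itemize}
\item If $\sigma<\tau$, the terminal cost at $x_1$ is $k(0)$, the minimum.
\item If $\sigma=\tau$, then $x_1+Y_y(\tau)\ge0$ and $x_1+Y_y(\tau)\le x_2+Y_y(\tau)$. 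Both arguments are nonnegative, so $k(x_1+Y_y(\tau))\le k(x_2+Y_y(\tau))$.
\end{itemize}
With $\sigma\le\tau$, this gives $J$-cost at $x_1$ no larger than the cost at $x_2$, hence $U(x_1,y)\le U(x_2,y)$. The same argument applies when $\tau$ is not reduced, because $k$ is even and convex: $k(x_1+Y)\le k(x_2+Y)$ whenever $x_1+Y\ge0$.

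I expect the only delicate point to be the bookkeeping of the lower state boundary. In each comparison, the "smaller" problem's process may hit its own diagonal before the reference stopping time. This is handled by truncating at that hitting time and using that $k$ attains its minimum at $0$ and is non-decreasing on $[0,\infty)$. No structural assumption beyond \ref{ass_1} is needed.
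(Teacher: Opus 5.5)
Your proposal is correct and follows essentially the same route as the paper's proof. You use a same-noise coupling with truncation at the first hitting time of the smaller problem's diagonal for the monotonicity in $y$ and in $x$, and a non-strict rerun of the coupling from Proposition~\ref{prop_diagonal_expansion} to obtain $U(x+h,y-h)\le U(x,y)$.
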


\begin{proof}
    We first prove monotonicity in \(y\). Fix \(-x\le y_1\le y_2\), and couple \(Y_{y_1}(\cdot)\) and \(Y_{y_2}(\cdot)\) with the same Brownian motion. Since \(G\) is increasing, the comparison principle gives
    \(
        Y_{y_1}(t)\le Y_{y_2}(t),\, t\ge0.
    \)
    As in the fixed-\(x\) reduction, it is enough to consider stopping times for the process started from \(y_2\) before it hits \(-x\). Given such a stopping time \(\tau\), stop the process started from \(y_1\) at
    \[
        \sigma
        \coloneqq
        \tau\wedge\inf\{t\ge0:Y_{y_1}(t)\le -x\}.
    \]
    Then \(\sigma\le\tau\), and either \(x+Y_{y_1}(\sigma)=0\), or \(\sigma=\tau\) and \(Y_{y_1}(\tau)\le Y_{y_2}(\tau)\). Since \(k\) is non-decreasing on \([0,\infty)\), the cost obtained from \(y_1\) by \(\sigma\) is no larger than the cost obtained from \(y_2\) by \(\tau\). Taking the infimum over \(\tau\) gives
    \(
        U(x,y_1)\le U(x,y_2).
    \)

    The monotonicity in \(x\) follows by the same truncation argument, using the same process \(Y_y(\cdot)\) and truncating at its first hitting time of \(-x_1\), for \(\max\{0,-y\}\leq x_1\leq x_2\).

    The diagonal monotonicity is obtained in the same way as the diagonal expansion in Proposition \ref{prop_diagonal_expansion}, but without taking a strict continuation point. Namely, couple \(Y_y(\cdot)\) with \(\widetilde Y(\cdot)\coloneqq Y_{y-h}(\cdot)\), driven by the same Brownian motion, and set \(Z(t)=\widetilde Y(t)+h\). Then
    \(
        dZ(t)=G(Z(t)-h)\,dt+dN(t),
    \)
    so \(Z(t)\le Y_y(t)\) by comparison. Given any admissible stopping time for \((x,y)\), truncate it as in the proof of Proposition \ref{prop_diagonal_expansion} when \(\widetilde Y\) hits \(-(x+h)\). The resulting stopping time for \((x+h,y-h)\) has no larger running cost and no larger terminal cost. Taking infima over the original stopping times gives
    \(
        U(x+h,y-h)\le U(x,y),
    \)
    as claimed.
\end{proof}

The original value function $V$ in \eqref{value_function_def} is the restriction of \(U\) in \eqref{2d_value_func_def} to the horizontal line \(y=0\). The next section uses the strip representation of \(\mathcal C\), the expansion properties above, and the diagonal-contact dichotomy to describe this restriction in more detail.

\section{Back to the Original Problem}\label{sec:original_problem}

We return now to the original stopping problem of Section~\ref{sec:model}. No new reduction is needed at this point: the original value function in \eqref{value_func_via_Y} is simply the restriction of the two-dimensional value function \eqref{2d_value_func_def} to the horizontal line
\(y=0\), namely, 
\(
    V(x)=U(x,0),\, x\ge0.
\)
The purpose of this short section is therefore to translate the structural results obtained above into the notation of the original problem, whose state is the observed process \(X\) in \eqref{diffusion_process_def}.

The original continuation and stopping sets are
\begin{equation}\label{def_original_continuation_set}
    \mathcal C^0
    \coloneqq
    \{x\ge0:V(x)<k(x)\},
    \qquad
    \mathcal S^0
    \coloneqq
    \{x\ge0:V(x)=k(x)\}.
\end{equation}
Since \(V(x)=U(x,0)\), the original continuation set is the horizontal section of the two-dimensional continuation region:
$x\in\mathcal C^0$ if and only if $(x,0)\in\mathcal C$.
Using the strip representation of \(\mathcal C\), this is equivalent to
\begin{equation}\label{horizontal_section_condition}
    x\in\mathcal C^0
    \quad\Longleftrightarrow\quad
    x\in\mathcal I
    \quad\text{and}\quad
    a(x)<0<b(x).
\end{equation}
It is convenient to encode the two inequalities in
\eqref{horizontal_section_condition} by
\begin{equation}\label{def_kappa_threshold}
    \kappa(x)
    \coloneqq
    \min\{-a(x),\,b(x)\},
    \qquad x\in\mathcal I.
\end{equation}
Thus, the horizontal line \(y=0\) lies inside the continuation strip precisely when \(\kappa(x)>0\).
Finally, we call a point \((x,0)\in\partial\mathcal C\) a regular horizontal
crossing point if \(x\) is an interior point of \(\mathcal I\) and
the free-boundary functions relevant to the corresponding boundary
regime are of class \(C^1\) on a neighborhood of \(x\).

\begin{Th}\label{th_original_threshold}
    Assume that the hypotheses of Theorem~\ref{th_2d_structure} hold. Then
    \begin{equation}\label{original_continuation_via_chi}
        \mathcal C^0
        =
        \{x\in\mathcal I:\kappa(x)>0\}.
    \end{equation}
    In particular, for every \(x\notin\mathcal C^0\), immediate stopping is optimal and \(V(x)=k(x)\). For every \(x\in\mathcal C^0\), we have
    \begin{equation}\label{original_value_formula}
        V(x)=Q(x)G(0)+R(x),
    \end{equation}
    with the functions $Q$ and $R$ defined in \eqref{def_q_R_coefficients}, and an optimal stopping time is
    \begin{equation}\label{original_optimal_time_X}
        \tau_x^*
        =
        \inf\{t\ge0:X(t)\notin(x+a(x),\,x+b(x))\}.
    \end{equation}

    If, in addition, \ref{ass_3} holds, then the function $\kappa(\cdot)$ of \eqref{def_kappa_threshold} is non-decreasing on \(\mathcal I\), and \(\mathcal C^0\) is either empty or a right interval. More precisely, with
    \begin{equation}\label{def_x_star_original}
        x^*
        \coloneqq
        \inf\{x\in\mathcal I:\kappa(x)>0\}
    \end{equation}
    and the convention \(\inf\emptyset=\infty\), immediate stopping is optimal for every \(0\le x<x^*\), while \(x\in\mathcal C^0\) holds for every \(x>x^*\).

    Finally, assume that \(J\subset\mathcal C^0\) is an open interval on which the relevant two-dimensional boundaries are of class \(C^1\). Then \(V\) is \(C^1\) on \(J\), with
    \begin{equation}\label{V_derivative_continuation}
        V'(x)=Q'(x)G(0)+R'(x),
        \qquad x\in J.
    \end{equation}
    If \(x^*<\infty\), \(V(x^*)=k(x^*)\), and \((x^*,0)\) is a regular horizontal crossing point, then \(V\) satisfies smooth fit at the original threshold:
    \begin{equation}\label{original_smooth_fit_threshold}
        V'(x^*+)=k'(x^*).
    \end{equation}
    If \(x^*>0\), then \(V'(x^*-)=k'(x^*)\) holds as well, so \(V\) is continuously differentiable at
    \(x^*\).
\end{Th}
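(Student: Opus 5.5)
The proof proceeds by restricting the two-dimensional results to the horizontal line $y=0$. First, \eqref{original_continuation_via_chi} is the sectional representation \eqref{C_sectional_representation} read at $y=0$. Indeed, $x\in\mathcal C^0$ iff $(x,0)\in\mathcal C$, iff $x\in\mathcal I$ and $a(x)<0<b(x)$, iff $\kappa(x)>0$; this is \eqref{horizontal_section_condition}. For $x\notin\mathcal C^0$, the definition of $\mathcal S$ gives $V(x)=U(x,0)=k(x)$, so $\tau=0$ is optimal. For $x\in\mathcal C^0$, Proposition~\ref{prop_value_representation_2d} at $y=0$ gives $V(x)=Q(x)G(0)+R(x)$, since the term $-cyG(y)$ vanishes. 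The optimal time of Theorem~\ref{th_fixed_x_solution} is the exit of $Y_0$ from $(a(x),b(x))$. Since $X=x+Y_0$ and the two processes generate the same filtration, this exit time is exactly \eqref{original_optimal_time_X}.

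Under \ref{ass_3}, Proposition~\ref{prop_vertical_expansion} gives $a(x+h)\le a(x)$ and $b(x+h)\ge b(x)$ for $h\ge0$. Hence $-a$ and $b$ are non-decreasing, and so is their minimum $\kappa$. Since $\mathcal I$ is a right interval (Proposition~\ref{prop_diagonal_expansion}), the set $\{x\in\mathcal I:\kappa(x)>0\}$ is an upper set of $\mathcal I$, hence empty or a right interval. Alternatively, vertical expansion applied directly to $(x,0)\in\mathcal C$ shows $(x+h,0)\in\mathcal C$. This gives stopping for $x<x^*$ and continuation for $x>x^*$, with $x^*$ as in \eqref{def_x_star_original}.

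For regularity on an open $J\subset\mathcal C^0$ with $C^1$ boundaries, I would show that $Q$ and $R$ are $C^1$. Here $Q(x)=\Phi(x,b(x))$ is a composition of $C^1$ maps: $\Phi$ is $C^1$ on $y>-x$ because $k\in C^2((0,\infty))$ and $x+b(x)>0$. For $R$, use \eqref{def_q_R_coefficients}. In the interior regime, $x+a(x)>0$, so $k(x+a(x))$ is $C^1$. In the pinned regime, $k(x+a(x))=k(0)$ is constant and $G(-x)$ is smooth. Then \eqref{V_derivative_continuation} follows by differentiation.

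For smooth fit at $x^*$, I would use the horizontal smooth-fit identities of Proposition~\ref{prop_smooth_fit_2d}. Since $(x^*,0)$ is a crossing point with $V(x^*)=k(x^*)$, either $b(x^*)=0$ or $a(x^*)=0$; monotonicity of $\kappa$ and continuity give $\kappa(x^*)=0$. If $b(x^*)=0$, the identity $Q'G(b)+R'=k'(x+b)$ evaluated as $x\downarrow x^*$ gives $V'(x^*+)=Q'(x^*)G(0)+R'(x^*)=k'(x^*)$. The case $a(x^*)=0$ is analogous via the lower identity; this requires an interior lower boundary, which holds because $a(x^*)=0>-x^*$ when $x^*>0$. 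The case $x^*=0$ must be handled separately: then $a(0)=0=-x^*$ would be pinned, and only the $b$-case can apply. On the left, $V=k$ on $[0,x^*)$, so $V'(x^*-)=k'(x^*)$ when $x^*>0$ by $k\in C^2$.

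\textbf{Main obstacle.} The delicate step is justifying these one-sided limits. It requires that the $C^1$ boundaries and the derivatives $Q',R'$ extend continuously to $x^*$ from the right, which is what the regular-crossing hypothesis provides, together with taking the limit of the smooth-fit identity at $x^*$.
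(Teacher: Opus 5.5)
Your proposal is correct and follows the paper's proof essentially step by step. The horizontal section of the strip representation gives \eqref{original_continuation_via_chi} and \eqref{original_value_formula}, and vertical expansion gives the monotonicity of $\kappa$. Differentiating $Q(x)G(0)+R(x)$ gives \eqref{V_derivative_continuation}, and the horizontal smooth-fit identities of Proposition~\ref{prop_smooth_fit_2d} give \eqref{original_smooth_fit_threshold}; your case split $b(x^*)=0$ versus $a(x^*)=0$ only makes explicit what the paper leaves implicit.

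One small correction concerns your aside on $x^*=0$. There $b(0)>a(0)\ge 0$ rules out the $b$-case, so the crossing is through the pinned corner, where Proposition~\ref{prop_smooth_fit_2d} provides no lower identity. That case is simply excluded by the definition of a regular horizontal crossing point, which requires $x^*$ to be an interior point of $\mathcal I\subseteq[0,\infty)$.
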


\begin{proof}
    The identity \eqref{original_continuation_via_chi} is just \eqref{horizontal_section_condition} rewritten using \eqref{def_kappa_threshold}. If \(x\notin\mathcal C^0\), then \(V(x)=k(x)\) by the definition of the stopping set. If \(x\in\mathcal C^0\), then \(a(x)<0<b(x)\), so \((x,0)\in\mathcal C\).
    The value representation in Proposition \ref{prop_value_representation_2d} gives
    \(
        V(x)=U(x,0)
    \)
    as in \eqref{original_value_formula}, since the term \(-cyG(y)\) vanishes at \(y=0\). The stopping time \eqref{original_optimal_time_X} is just the first exit time from the fixed-\(x\) continuation interval.

    Under \ref{ass_3}, Proposition \ref{prop_vertical_expansion} gives
    \(
        a(x+h)\le a(x)
    \)
    and
    \(
        b(x+h)\ge b(x)
    \)
    for all $h \ge 0$.
    Hence, the functions \(-a(\cdot)\), \(b(\cdot)\), and therefore \(\kappa(\cdot)\), are non-decreasing on \(\mathcal I\). Since \(\mathcal I\) is a right interval by Proposition~\ref{prop_diagonal_expansion}, the set \(\{x\in\mathcal I:\kappa(x)>0\}\) is a right interval. This proves the
    threshold statement.

    On any open interval \(J\subset\mathcal C^0\) where the relevant two-dimensional boundaries are of class \(C^1\), the functions \(Q\) and \(R\) are of class \(C^1\), and differentiating in \eqref{original_value_formula} leads to \eqref{V_derivative_continuation}.

    Finally, if \((x^*,0)\) is a regular horizontal crossing point, then the horizontal smooth-fit statement in Proposition \ref{prop_smooth_fit_2d} gives the continuation-side derivative \(V'(x^*+)=k'(x^*)\). On the stopping side, \(V(x)=k(x)\). Therefore, if \(x^*>0\), \(V'(x^*-)=k'(x^*)\), and \(V\) is of class \(C^1\) at the threshold.
\end{proof}

Theorem \ref{th_original_threshold} identifies the original continuation set as the horizontal section of the two-dimensional continuation strip. However, it does not, by itself, decide whether this section is nonempty, nor, if it is nonempty, whether it is a half-line, a bounded interval, or a union of disjoint intervals. There are two possible obstructions. The first is that the two-dimensional continuation region may itself be empty, that is, \(\mathcal I=\varnothing\). This question is governed by the birth criterion in Proposition \ref{prop_birth_sections}. 
The second obstruction can occur even when \(\mathcal I\neq\varnothing\): the continuation strip may lie entirely below the horizontal line \(y=0\), in which case the original continuation set is still empty. Alternatively, the strip may have a nontrivial intersection with $y=0$, in which case the original problem also has a nontrivial continuation set. All of these possibilities can occur; they are illustrated in Section \ref{sec:examples}.

Regarding the second obstruction, it can be partially clarified by the boundary asymptotics of subsection \ref{subsec:upper_boundary_asymptotics}. Suppose that \(\mathcal I\neq\varnothing\), that \ref{ass_3} holds, and that the upper-boundary asymptotics of Proposition \ref{prop_upper_boundary_asymptotics} apply:
\[
    \frac{k'(s)}{k(s)}\to\lambda\in[0,2),
    \qquad
    b(x)\to\beta_\lambda .
\]
Since \(d(x)=x+a(x)\) is non-negative and non-increasing on \(\mathcal I\), the lower boundary satisfies
\[
    a(x)=d(x)-x<0
\]
for all sufficiently large \(x\in\mathcal I\). Hence, along the tail of \(\mathcal I\), the horizontal line \(y=0\) enters the continuation strip precisely when the upper boundary eventually lies above zero. Under \ref{ass_3}, the boundary \(b(\cdot)\) is non-decreasing, and therefore
\[
    \mathcal C^0\neq\varnothing
    \quad\Longleftrightarrow\quad
    x^*<\infty
    \quad\Longleftrightarrow\quad
    \beta_\lambda>0,
\]
in the notation of \eqref{def_beta_lambda}.
In particular, if \(\lambda=0\), then \(\beta_0=+\infty\), and the original continuation threshold is finite whenever \(\mathcal I\neq\varnothing\).

We conclude by mentioning that the analysis above was carried out for \(x\ge0\). The negative half-line is obtained by symmetry. To make the dependence on the prior explicit, write \(V_p\) for the value function when \(\mathbb P(B=1)=p\). Since \(k\) is even, reflecting the observation process sends \(x\) to \(-x\), \(B\) to \(-B\), and the prior \(p\) to \(1-p\). Hence
\begin{equation}\label{symmetry_negative_x_original}
    V_p(x)=V_{1-p}(-x),
    \qquad x<0.
\end{equation}
Thus, the solution on the positive half-line, together with the replacement \(p\mapsto1-p\), determines the value function and the optimal stopping rule on the whole real line.

The cost-specific consequences of the lower- and upper-boundary asymptotics, including the power and exponential examples, are recorded in the next section.

\section{Examples}\label{sec:examples}

We finish with several examples illustrating the geometry developed in previous sections. The main purpose of this section is thus to show how different structural regimes of the two-dimensional continuation region can appear for concrete costs and parameter values.

As in Section~\ref{sec:structure}, several qualitatively distinct features may occur.  First, the two-dimensional continuation region may be empty or nonempty, and even when it is nonempty, it might not have an intersection with the horizontal line \(y=0\). 
In the latter case, the embedded two-dimensional problem has a continuation strip, but the original one-dimensional problem mandates immediate stopping everywhere.  
Second, the continuation strip may be born already at \(x=0\), or only after a positive value of \(x\).  Third, the lower boundary may or may not touch the diagonal \(y=-x\). Fourth, the upper boundary is monotone under Assumption~\ref{ass_3}, but can behave differently when this assumption is dropped. Finally, the main fixed-\(x\) construction in Section~\ref{sec:one_d_problem} relies heavily on the one-valley geometry of \(\Phi_x\) in \eqref{def_phi}; for more general convex costs this geometry need not hold.

The examples below are chosen to make these alternatives visible. Unless otherwise stated, the shaded regions in the two-dimensional plots represent the continuation region \(\mathcal C\), the curves \(a(\cdot)\) and \(b(\cdot)\) are the lower and upper boundaries, the dashed line is the diagonal \(y=-x\), and the horizontal line is \(y=0\).  The vertical dotted line marks the numerical birth point of the nonempty continuation sections.

\subsection{A standard quadratic example}\label{subsec_quadratic_example}

We start with the quadratic cost
\[
    k(s)=s^2,\qquad c=0.6,\qquad p=0.5;
\]
the corresponding two-dimensional continuation region is illustrated in the left panel of  Figure~\ref{fig:example_quadratic_standard}.
This is the most standard case among the examples considered in this section. The cost satisfies all standing assumptions, including \ref{ass_2} and \ref{ass_3}. The continuation strip is born only after a positive value of \(x\), numerically around \(x\simeq2.03\).  Since \(k'(0+)=0<c\), Proposition~\ref{prop_no_contact_m_le_c} implies that the lower boundary never touches the diagonal.  More precisely, Corollary \ref{cor_lower_boundary_asymptote} gives \(a(x)=-x+\rho+o(1)\), with \(\rho=c/2\) in this example.  On the other hand, the upper boundary increases logarithmically by Corollary~\ref{cor_upper_boundary_examples}. Thus, the horizontal line \(y=0\) eventually enters the continuation strip and remains inside it thereafter.

\begin{figure}[ht]
    \centering
    \begin{subfigure}[t]{.48\linewidth}
        \centering
        \includegraphics[width=\linewidth]{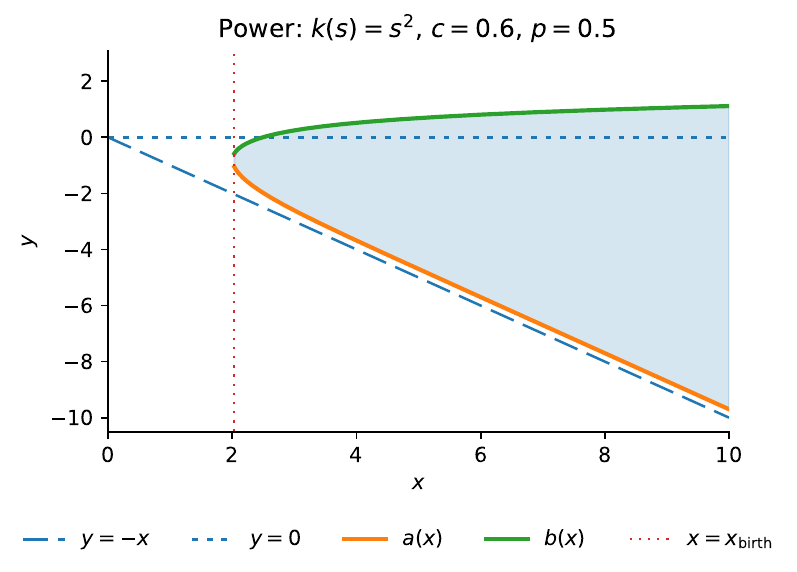}
        \caption{Continuation strip.}
    \end{subfigure}
    \hfill
    \begin{subfigure}[t]{.4\linewidth}
        \centering
        \includegraphics[width=\linewidth]{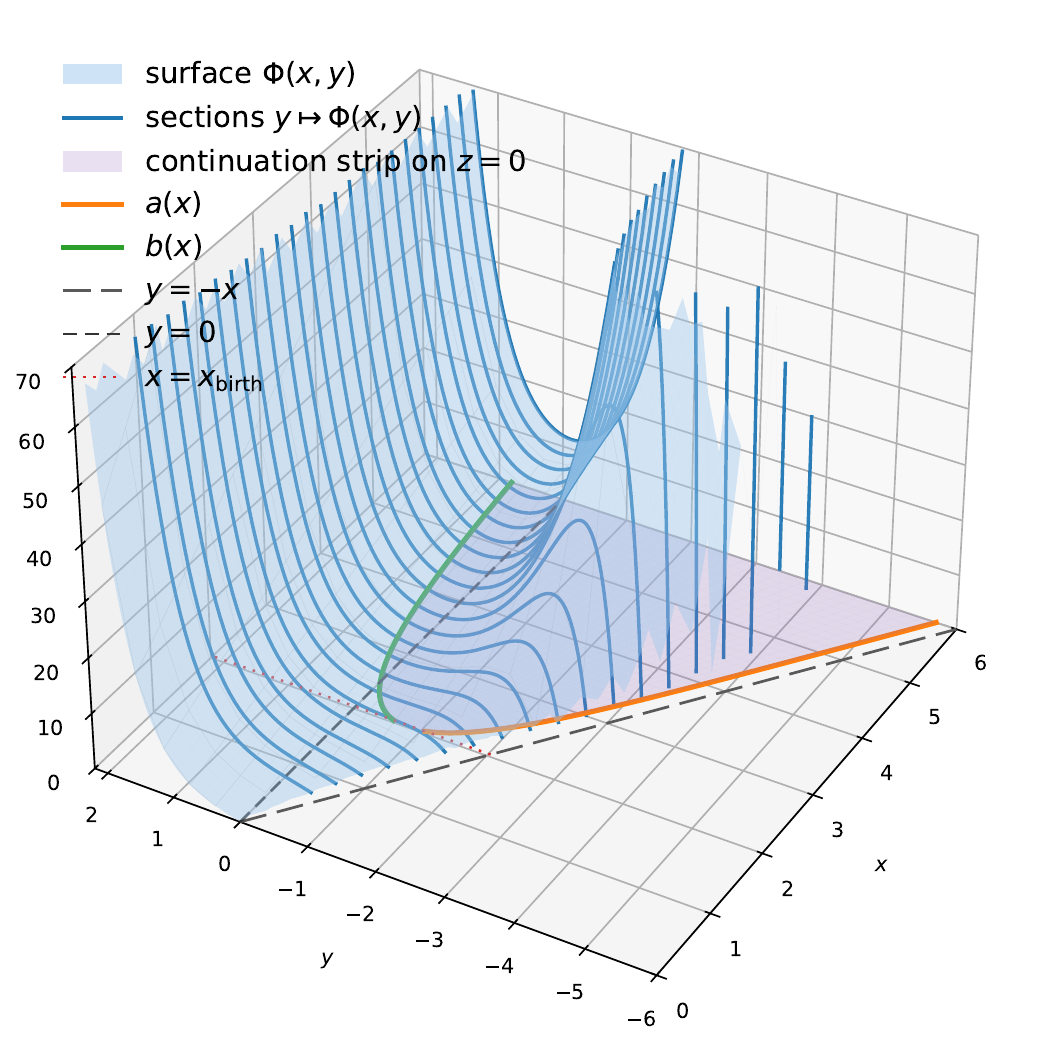}
        \caption{Evolution of \(\Phi_x\).}
    \end{subfigure}
    \caption{The standard quadratic example \(k(s)=s^2\), with \(c=0.6\) and \(p=0.5\).  The left panel shows the continuation strip.  The right panel shows the surface \(\Phi(x,y)\), several fixed-\(x\) sections \(y\mapsto\Phi(x,y)\), and the continuation strip projected onto the plane \(z=0\).} \label{fig:example_quadratic_standard}
\end{figure}

Figure~\ref{fig:example_quadratic_standard} also shows the same example from a different perspective.  The right panel plots the surface \((x,y)\mapsto\Phi(x,y)\), together with several fixed-\(x\) sections.  For small \(x\), the function \(y\mapsto\Phi(x,y)\) is monotone and the fixed-\(x\) continuation section is empty.  Once \(x\) passes the birth point, a single-valley region appears and then expands; the projected strip on the plane
\(z=0\) shows the corresponding continuation region.

\subsection{Birth at the origin and diagonal contact}

The next two examples illustrate different behavior near the birth of the continuation strip.  In the left panel of Figure~\ref{fig:example_power_linear} we again use a power cost
\(
    k(s)=s^2,
\)
this time with $c=0.1,\, p=0.03$.
Here, the prior is tilted sufficiently toward the negative drift so that the continuation region is already nonempty at \(x=0\).  Since \(k'(0+)=0<c\), the lower boundary still does not touch the diagonal.

The right panel shows the linear cost
\(
    k(s)=2s
\)
with $c=0.8$ and $p=0.2.$
Here \(k'(0+)=2>c\), and the lower boundary is pinned to the diagonal as soon as continuation appears.  This is the simplest example of diagonal contact.

\begin{figure}[ht]
    \centering
    \includegraphics[width=.92\linewidth]{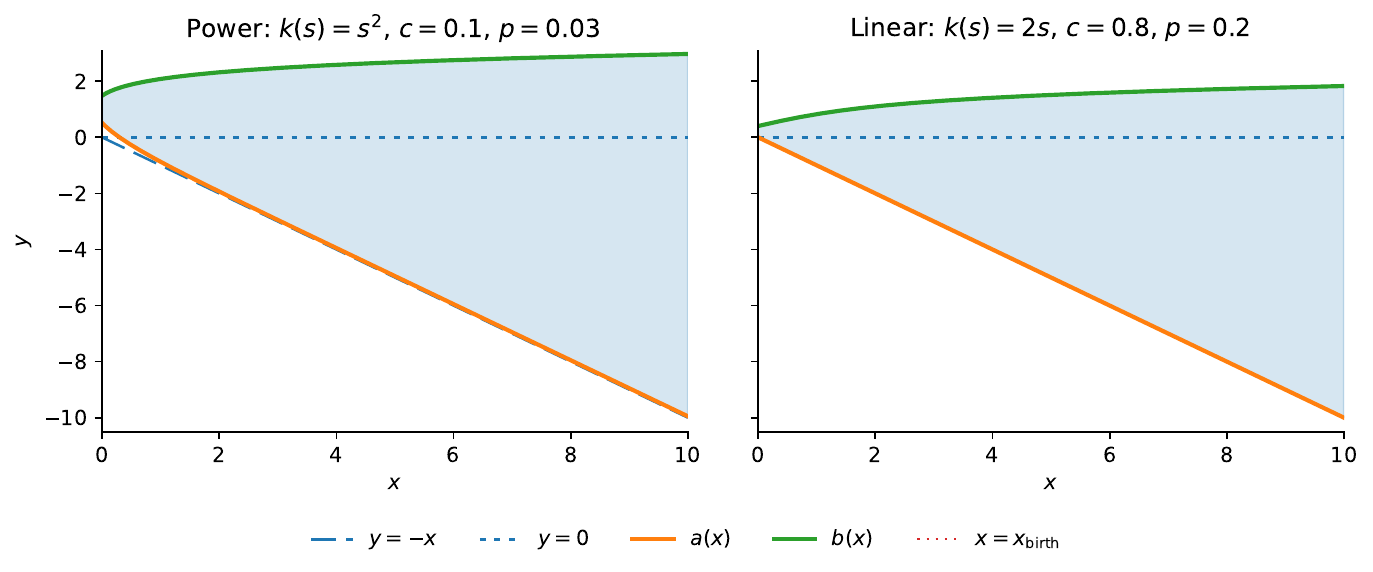}
    \caption{Two-dimensional continuation regions for two costs. The left panel shows quadratic cost $k(s) = s^2$ with $c=0.1$, $p=0.03$. The right panel shows linear cost $k(s)=2s$ with $c=0.8$, $p=0.2.$}
    \label{fig:example_power_linear}
\end{figure}

\subsection{Exponential costs and empty $\mathcal{C}^0$}\label{subsec_exponential_example}

Exponential costs give a different type of upper-boundary behavior. For
\(
    k(s)=\alpha e^{\lambda s}
\)
with \(0<\lambda<2\), Proposition~\ref{prop_upper_boundary_asymptotics} gives
\[
    b(x)\to \beta_\lambda
    =
    y_0+\frac12\log\frac{2-\lambda}{\lambda}.
\]
Thus, the upper boundary converges to a finite level, and the sign of \(\beta_\lambda\) determines whether the horizontal line \(y=0\) eventually enters the strip.

Figure~\ref{fig:example_exponential} shows two such regimes. In the left panel, the cost is given by
\(
    k(s)=e^{1.5s}
\)
with $c=1$ and $p=0.1$. Here \(\beta_\lambda>0\), and the original one-dimensional continuation region is nonempty.  The same example also illustrates delayed diagonal contact of the lower boundary. In the right panel,
\(
    k(s)=0.3e^{1.7s}
\)
with $c=0.8$ and $p=0.3.$
The two-dimensional continuation strip exists, but its upper boundary remains below \(y=0\). Therefore, the original one-dimensional continuation region is
empty even though \(\mathcal I\neq\varnothing\).

\begin{figure}[ht]
    \centering
    \includegraphics[width=.92\linewidth]{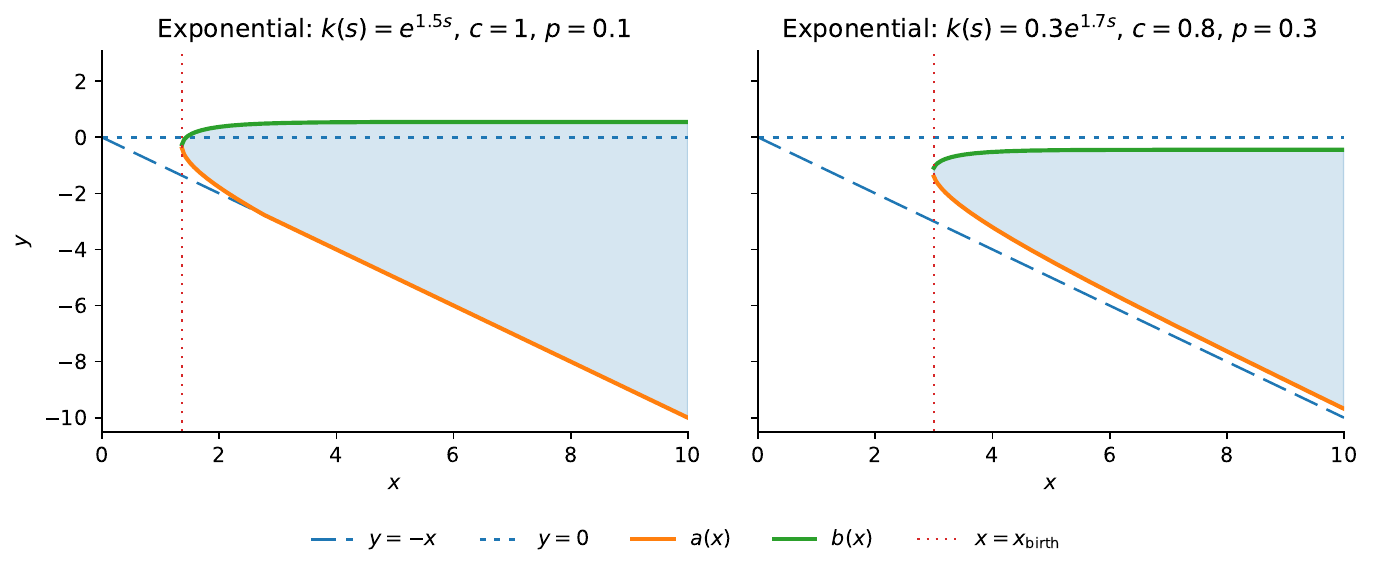}
    \caption{Two exponential examples with different behaviors of the continuation set for the original problem $\mathcal{C}^0 = \mathcal{C} \cap \{y = 0\}$.}
    \label{fig:example_exponential}
\end{figure}

\subsection{Examples outside the monotonicity assumptions}

We now show two numerical examples outside the monotonicity framework of assumption~\ref{ass_3}.  Both costs satisfy the basic standing assumption \ref{ass_1}, but not the structural condition \ref{ass_2}; in particular, the convexity of the function \(P_k\) in \eqref{Qk_def} fails for these choices.  This does not by itself imply that the fixed-\(x\) one-valley geometry fails.  In fact, for the parameter ranges shown below, a direct numerical check of the sign of \(\Phi_x'\) shows a single interval of decrease for each plotted value of \(x\). Thus, the examples should be read as illustrations of what can happen when the vertical monotonicity assumption is dropped, while the fixed-\(x\) sections remain numerically in the one-valley regime.

\begin{figure}[ht]
    \centering
    \includegraphics[width=.92\linewidth]{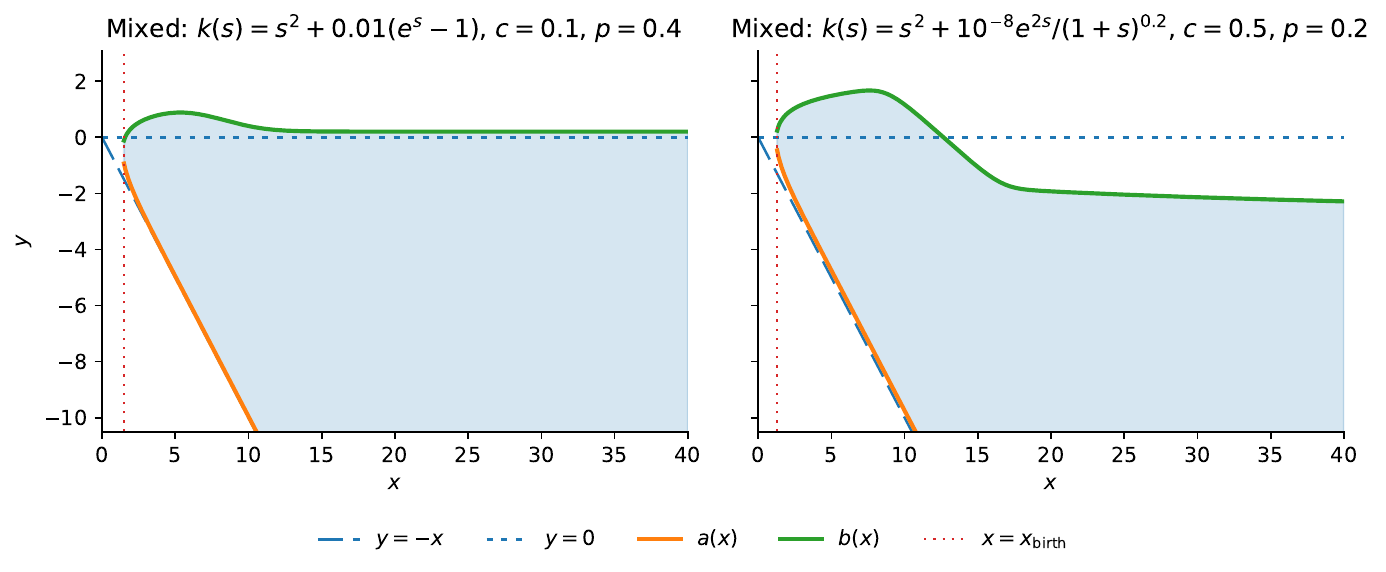}
    \caption{Two examples with mixed power/exponential costs that fail to satisfy assumption \ref{ass_3}, thus also monotonicity of the upper boundary.}
    \label{fig:example_nonmonotone_b}
\end{figure}

The left panel of Figure~\ref{fig:example_nonmonotone_b} uses the mixed cost
\[
    k(s)=s^2+0.01(e^s-1),\qquad c=0.1,\qquad p=0.4.
\]
The exponential tail eventually dominates the power part, and the upper boundary \(b(\cdot)\) bends downward after initially increasing. Nevertheless, the upper boundary remains above the horizontal line \(y=0\), so the original
continuation region is still eventually nonempty.

The right panel uses a critical-tail perturbation,
\[
    k(s)=s^2+10^{-8}\frac{e^{2s}}{(1+s)^{0.2}},
    \qquad c=0.5,\qquad p=0.2.
\]
This cost is still convex, but its logarithmic derivative tends to the critical value \(2\).  Numerically, the upper boundary first rises above zero and then falls below it, eventually drifting downward. Thus, the original one-dimensional continuation region is a finite interval of \(x\)-values, rather than a right half-line.  This behavior is excluded by the monotonicity conclusions obtained under assumption~\ref{ass_3}.

\subsection{Failure of the one-valley geometry}

Finally, we illustrate why the one-valley assumption in the fixed-\(x\) problem of Section \ref{sec:one_d_problem} is a genuine structural restriction. We consider a smooth strictly convex cost $k(\cdot)$ defined on \([0,\infty)\) by prescribing
\[
    k'(s)
    =
    a_0+\mu s+\frac{\mu\delta}{\omega}\sin(\omega s), \quad k(0) = 0,
\]
with
\(
    a_0=0.2,\, \mu=2,\, \delta=0.9,\, \omega=12.
\)
Then \(k'(s)>0\), and
\(    
    k''(s)
    =
    \mu\big(1+\delta\cos(\omega s)\big)
    \ge
    \mu(1-\delta)>0.
\)
Thus, \(k(\cdot)\) satisfies the basic convexity and regularity assumptions of \ref{ass_1} after even extension. However, the oscillatory curvature violates the simple sufficient condition \ref{ass_2}, and the resulting geometry of the critical function \(\Phi_x\) in \eqref{def_phi} is no longer one-valley.
Indeed, for the fixed value
\[
    x=1.5,\qquad c=0.2,\qquad p=0.5,
\]
the function \(\Phi_x\) has several distinct intervals of decrease. This is shown in the left panel of Figure~\ref{fig:example_multivalley}.  The orange regions indicate where \(\Phi_x'<0\), while the pale blue regions indicate the continuation intervals obtained numerically for this fixed \(x\).

The middle panel plots the numerical gap \(k_x-U_x\), computed from a finite-difference solution of the fixed-\(x\) obstacle problem.  The gap is positive on several disjoint intervals, confirming the multi-component structure of the continuation set in this example.  The right panel shows the corresponding numerical two-dimensional continuation region.  The vertical dotted line marks the value \(x=1.5\) used in the first two panels; its intersection with the two-dimensional region gives the fixed-\(x\) continuation intervals shown on the left and in the middle.

This example is included to show that the one-interval geometry imposed in Section \ref{sec:one_d_problem} is not automatic, even for smooth strictly convex terminal costs.  Once this geometry is dropped, the fixed-\(x\) continuation set, and consequently the two-dimensional continuation region, may have a substantially more complicated structure.

\begin{figure}[ht]
    \centering
    \includegraphics[width=.98\linewidth]{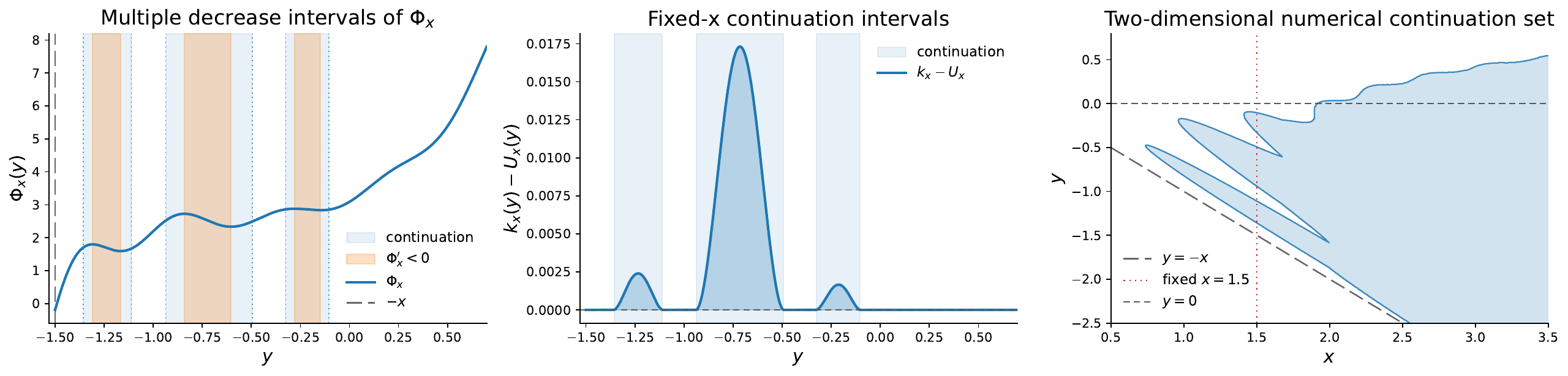}
    \caption{A numerical example outside the one-valley geometry.}
    \label{fig:example_multivalley}
\end{figure}

\section{Open Problems}\label{sec:conclusion}

Several questions, both analytic and geometric, remain open. We believe that there are many possible refinements of our results, but five seem particularly interesting.

The first concerns the geometry of the free boundaries $a(\cdot)$ and $b(\cdot)$. In the examples of Subsections \ref{subsec_quadratic_example}--\ref{subsec_exponential_example}, all of which are covered by the structural assumptions \(\ref{ass_1}\)--\(\ref{ass_3}\), the lower boundary \(a(\cdot)\) appears to be convex, while the upper boundary \(b(\cdot)\) appears to be concave.  We do not know whether this is a general phenomenon, nor what assumptions on \(k\), or equivalently on the functions \(\Phi_x\), might imply it. It would be very interesting to establish such curvature properties, since they would give a much sharper geometric description of the continuation strip.

The second question concerns the regularity and characterization of the free boundaries. We have shown that the boundaries are described by balancing equations \eqref{eq_main_balancing_condition}, and at regular points by a coupled system of ordinary differential equations \eqref{ode_a_interior} and \eqref{ode_b_active}.
It would be useful to identify intrinsic conditions under which the corresponding regularity assumptions can be removed, for instance conditions guaranteeing global \(C^1\)-regularity of the boundaries. More generally, one would like a characterization of the boundary pair \((a,b)\) which is less dependent on the nondegeneracy assumptions appearing in the local ODE description.

The third question is to obtain a more explicit analytic characterization of the birth of continuation in the original problem. For the two-dimensional problem, the birth of nonempty fixed-\(x\) sections is characterized by the condition that \(D_y \Phi(x,y)\) becomes negative; see \eqref{def_I_via_phi}. However, the original one-dimensional continuation region is the horizontal section \(y=0\) of the two-dimensional continuation strip. Thus, its birth depends on when this horizontal line first enters the strip, and we do not have a comparable analytic formula for this threshold.

The fourth question is to replace the Bernoulli prior by a more general prior on the unknown drift. The Bernoulli case is special because, after filtering and centering by the initial position, the problem remains a time-homogeneous optimal stopping problem. For a general prior, the posterior distribution may itself become part of the state, and the resulting stopping problem is expected to be time-inhomogeneous in general. It would be interesting to understand which of the qualitative features found here survive in that setting. In particular, one may ask under what conditions the continuation region remains connected, or even convex, in an appropriate state space. As already mentioned, this is one of the reasons for using the filtering formulation throughout the paper, rather than relying on the more classical change-of-measure approach often used for two-point priors in sequential testing: the general filtering approach seems more adaptable to priors beyond the Bernoulli case.

The final question, already mentioned in the introduction and the one we find the most intriguing, is to combine stopping with control of information acquisition leading to a triple problem of stochastic control, stopping, and filtering. To be more precise, in the present paper the observation mechanism is fixed. Instead, one could allow the decision maker to control the intensity or precision of the signal, paying a running cost of control. This would lead to a joint optimal stopping and stochastic control problem: one has to decide not only when to stop, but also how much information to acquire before stopping. It would be natural to ask whether the geometric viewpoint developed here still leads to tractable free-boundary equations, and whether the optimal information-acquisition policy has a simple structure.

\vspace{15pt}
\noindent
\textbf{Acknowledgments}

We gratefully acknowledge financial support from the National Science Foundation under grant NSF-DMS-25-06199 and from a Lenfest Award at Columbia University.

\printbibliography

\end{document}